\theoremstyle{plain}
\numberwithin{equation}{section}
\newtheorem{thm}{Theorem}[section]
\newtheorem{remark}[thm]{Remark}
\newtheorem{prop}[thm]{Proposition}
\newtheorem{cor}[thm]{Corollary}
\newtheorem{example}[thm]{Example}
\newcommand{\sbm}[1]{\left[\begin{smallmatrix} #1
		\end{smallmatrix}\right]}
\newcommand{\cD}{{\mathcal D}}
\newcommand{\cG}{{\mathcal G}}
\newcommand{\cL}{{\mathcal L}}
\newcommand{\cM}{{\mathcal M}}
\newcommand{\cO}{{\mathcal O}}
\newcommand{\cS}{{\mathcal S}}
\newcommand{\ba}{{\mathbf a}}
\newcommand{\be}{{\mathbf e}}
\newcommand{\bu}{{\mathbf u}}
\newcommand{\bk}{{\mathbf k}}
\newcommand{\bmu}{{\boldsymbol \mu}}
\newcommand{\blam}{{\boldsymbol \lambda}}
\newcommand{\bD}{{\mathbf D}}
\newcommand{\bM}{{\mathbf M}}
\begin{document}
 \title[Meromorphic trivializations]{Meromorphic matrix
trivializations of factors of 
 automorphy over a Riemann surface}
 \author[J. A.\ Ball]{Joseph A. Ball}
\address{Department of Mathematics,
Virginia Tech,
Blacksburg, VA 24061, USA}
\email{joball@math.vt.edu}
\author[K.F.\ Clancey]{Kevin F.\ Clancey}
\address{Department of Mathematics, University of Georgia,
Athens, GA 30602, USA}
\email{kclancey@uga.edu}
\author[V.\ Vinnikov]{Victor Vinnikov}
\address{Department of Mathematics, Ben Gurion University of the 
Negev, POB 653, 84105 Beer-Sheva, Israel}
\email{vinnikov@cs.bgu.ac.il}
\dedicatory{In memory of Leiba Rodman, a dear friend and dedicated 
colleague}

  \begin{abstract}  It is a consequence of the Jacobi Inversion
Theorem that a line bundle over a Riemann surface of genus $g$ has a meromorphic 
section having 
at most $g$ poles, or equivalently, the divisor class of a divisor
$D$ over $M$ contains a divisor  having at most 
$g$ poles (counting multiplicities).
We explore various analogues of these ideas for vector bundles and
associated matrix divisors over $M$.  The most 
explicit results are for the genus 1 case.  We also review and 
improve earlier results concerning the construction of automorphic or 
relatively automorphic  meromorphic matrix functions having a prescribed 
null/pole structure.
  \end{abstract}

  \maketitle
  
 \section{Introduction}
 Let $M$ be a closed Riemann surface of genus $g\geq 1$ and $\rho
:\widehat{M}\rightarrow M$ the universal cover with 
$\mathcal{G}$ the group of covering transformations.
 The collection
of equivalence classes of rank $r$ holomorphic vector 
bundles over $M$ is equivalent to the collection of equivalence
classes of rank $r$ holomorphic factors of automorphy on 
$\widehat{M}.$ Recall that a rank $r$ holomorphic factor of
automorphy on $\widehat{M}$ is a map 
$\zeta : \mathcal{G}\times\widehat{M}\rightarrow GL(r,\mathbb{C}),$
which is holomorphic on $\widehat{M}$  for $T$ 
fixed in $\mathcal{G},$  satisfying \[ \zeta (ST,u) = \zeta
(S,Tu)\zeta (T, u),\ \ \ S,T\in \mathcal{G},\ u\in\widehat{M}. \] 
Two rank $r$ factors of automorphy $\zeta$ and $\eta$ are
holomorphically equivalent in case there is a holomorphic function 
$h:\widehat{M} \rightarrow GL(r,\mathbb{C})$ satisfying 
\[h(Tu)\zeta (T,u) = \eta (T,u)h(u),\ \ \ T\in \mathcal{G},\ \
u\in\widehat{M}.\]
A rank $r$ factor of automorphy that is given by a representation
$\zeta :\mathcal{G}\rightarrow GL(r,\mathbb{C})$ 
is called a rank $r$ flat factor of automorphy.

Every rank $r$ factor of automorphy $\zeta$ on $\widehat{M}$ is
meromorphically trivial in the sense that there is an
$r\times r$-meromorphic
matrix function $\widehat F$ on $\widehat{M}$, 
nondegenerate in the sense that $\det \widehat F \not\equiv 0$,  such 
that 
\begin{equation} \label{trivial} 
    \widehat F(Tu)=\zeta (T,u) \widehat F(u),\, T\in\mathcal{G}, \,
u\in\widehat{M}. 
\end{equation} 
A nondegenerate $r\times r$-meromorphic matrix function $\widehat F$
satisfying (\ref{trivial}) will be called a 
    meromorphic matrix trivialization, or simply a trivialization, of
$\zeta.$ When \eqref{trivial} holds, we also say that $\widehat
F$ is a {\em relatively automorphic meromorphic matrix function} on
$\widehat M$ (with respect 
to the factor of automorphy $\zeta$).  In case the factor of 
automorphic is trivial (i.e., $\zeta(T,u) = I_{r}$ for all $T$ and 
$u$), we say simply that $F$ is an {\em automorphic meromorphic matrix 
function}.
If $F$ is a meromorphic matrix 
    trivialization of $\zeta,$ then any other meromorphic matrix
trivialization of $\zeta$ will have the form $FK,$ 
    where $K$ is a nondegenerate $r\times r$-automorphic meromorphic
matrix function on $\widehat{M}$, or what 
    amounts to the same,  a (single-valued) global meromorphic
function on $M$. Equivalently, the columns of a meromorphic matrix
trivialization 
form a basis for the space of meromorphic sections of $\zeta$ over
the field of meromorphic functions on $M.$

In the scalar case ($r=1$), there is a well-known correspondence
between 
three types of objects:  (1) factors of automorphy (flat or general) 
on $\widehat M$, (2) holomorphic  line bundles on $M$, and (3) 
divisors (i.e., pole/zero multiplicity specifications) on $M$.
While to some extent the analogue of this structure is understood for 
the higher rank case (see in particular \cite{Guntheta, Gunvect}),
one goal of this paper is to delineate more concretely
the higher rank analogue of this correspondence between (1) (left) 
matrix factors of automorphy on $\widehat M$, (2) holomorphic (row)
vector bundles on 
$M$, and (3) (right) matrix null-pole divisors (or right 
nondegenerate matrix germs) on $M$.   In particular, we continue the 
work of \cite{BC} by using the parametrization of a null-pole 
subspace  from \cite{BGR} to get a more explicit analogue of divisor 
(i.e., an encoding of zero and pole data) for the higher rank case.

If one starts with a divisor $\cD$ (i.e., an encoding of pole/zero 
structure which amounts to simply pole/zero multiplicity in the 
scalar case---precise definitions can be found in Section 
\ref{S:divisors} below), one can pose three basic Interpolation 
Problems:
\begin{enumerate}
    \item[(I) ]  \textbf{First Interpolation Problem:}
    Find an automorphic meromorphic matrix function $F$ on $M$ (i.e., 
    an automorphic meromorphic matrix function $\widehat F$ on the covering 
    surface $\widehat M$ with trivial factor of automorphy)  
    with right null/pole structure prescribed by $\cD$;
    
    \item[(II)]  \textbf{Second Interpolation Problem:}  Find a
    relatively automorphic  meromorphic matrix function $\widehat F$  on
$\widehat M$ with flat (left) factor of automorphy and with right null/pole 
    structure on $M$ prescribed by $\cD$;
    
    \item[(III)]  \textbf{Third Interpolation Problem:}
    Find a relatively automorphic meromorphic matrix function $\widehat F$ on 
    $\widehat M$ (with not necessarily flat factor of automorphy)
   with right null/pole structure on $M$ prescribed by $\cD$.
\end{enumerate}

Problem (I) was addressed in \cite{BC} for the case of ``simple'' 
null-pole structure (for precise definitions see Section 
\ref{S:simple} below) while Problem (II) was addressed in \cite{BCV}.
Here we revisit 
Problems (I) and (II) and complete the results obtained there to a 
more satisfactory form (see Theorem \ref{T:merint} below).  
We also make explicit the connections with the 
solution of a variant of Problem (I) studied in \cite{BV}.
It turns out that Problem (III) always has a 
solution; we give a simple direct proof of this result based on the 
theorem of Grauert \cite{Grauert} guaranteeing the triviality of any 
holomorphic vector bundle over a simply-connected domain (see the end 
of Section \ref{S:4.1} below).  

In the line bundle case ($r=1$), a well known consequence of 
Abel-Jacobi theory is that one can find a meromorphic trivialization 
$F$ of a flat line bundle having a divisor with total zero 
multiplicity equal to the pole multiplicity with multiplicity value 
equal to at most $g$ (the genus of $M$).  One purpose of this
paper is to find an analogue of this result for the vector bundle case, 
i.e., to analyze to what 
extent it is possible to find a meromorphic trivialization $F$  
of a given flat matrix factor of automorphy so that the matrix 
null-pole structure is as simple as possible (see Theorem 
\ref{T:NSF} and Corollary \ref{T:NSF'} below).  

In Section \ref{S:theta}  we present two explicit trivializations
of a flat factor of automorphy over an elliptic curve $M$ 
(i.e., Riemann surface of genus 1)
when the curve is presented in
the form $M=\mathbb{C}/(\mathbb{Z} + \tau\mathbb{Z})$. 
In Section \ref{S:simple}, we demonstrate how to construct
trivializations of a
flat factor of automorphy on an elliptic curve that 
has simple null-pole structure while maintaining a count on the
number of poles and zeros in such trivializations. These results 
use the fact that the uniformization of $M$ in the 
elliptic case is the complex plane ${\mathbb C}$ where one can make 
use of theta functions (including theta functions with matrix 
arguments) to give explicit constructions of relatively automorphic 
functions on $\widehat M = {\mathbb C}$. 

For generalizations to higher genus (the setting of Section \ref{S:divisors}), 
there are two distinct approaches which coalesce to a single approach 
for the case of genus $g$ equal to 1.
\begin{itemize}
    \item
The first approach uses the Abel-Jacobi map to 
embed the Riemann surface $M$ into the Jacobian variety ${\mathbb 
C}^{g}/\Lambda$ (where $\Lambda$ is the period lattice),  The 
universal cover is thereby embedded into ${\mathbb C}^{g}$ where one 
can use the function theory of ${\mathbb C}^{g}$ (in particular theta 
functions in $g$ variables). We use this approach in Subsection 
\ref{S:explicit} to obtain explicit formulas for the canonical 
functions used earlier in Section \ref{S:divisors} to adapt the theory of null/pole subspaces from 
\cite{BGR} to the Riemann-surface setting. From these explicit 
formulas one can read off continuous-dependence results which are 
needed to guarantee that a certain bundle-trivialization procedure 
leads to simple pole structure (Theorem \ref{T:NSF'} below).

\item  The second approach works directly with the uniformization 
$\widehat M$ which can be taken to be either the unit disk ${\mathbb 
D}$ or the upper or right half plane ${\mathbb H}$ contained in 
${\mathbb C}$, with the group of deck transformations concretely 
identified with  a Fuchsian group of linear-fractional transformations on $\widehat M$. 
The analogue of a theta function (constructed by adding up over all 
terms obtained via the action of a group of M\"obius transformations)
is what is called a Poincar\'e series (see e.g.\ \cite{B79, EM1, EM2, 
Forelli}).  It would be of interest to find higher-genus analogues 
of the genus-1 theta-function constructions in Sections \ref{S:theta} and \ref{S:simple} 
by using this second approach.
\end{itemize}
In the first approach an explicit description of the universal cover (an embedded 
submanifold in ${\mathbb C}^{g}$) is somewhat cumbersome while the 
group of deck transformations is simply the group of translations by 
elements of the period lattice; in the second approach the 
universal cover is simple but the group of deck transformations 
(linear-fractional maps rather than translations) is less explicit.  

The present study can be viewed as the latest installment in 
developing the higher-genus analogue  of the theory of interpolation 
for rational matrix functions (meromorphic matrix functions on the 
Riemann sphere ${\mathbb C} \cup {\infty}$, the essentially unique 
compact Riemann surface of genus $g=0$), a topic developed in much of 
the work of Leiba Rodman, especially in the monograph \cite{BGR}.  
It is with sadness and respect that we dedicate this 
paper to our dear friend and collaborator Leiba Rodman.

\section{Explicit trivializations of flat bundles when  $g=1$} 
    \label{S:theta}

\par In this section, we describe two natural meromorphic matrix
trivializations of a rank-$r$ flat factors of automorphy over 
an elliptic curve. In spite of the natural forms of these
trivializations, the associated meromorphic matrix functions do not
have simple null-pole divisor structure. This complication is
addressed in the next section.
\smallskip
\par \textbf{I. Theta\  function\  trivializations:}  Let $M$ be an
elliptic curve presented in 
the form $M=\mathbb{C} /( \mathbb{Z}+\tau \mathbb{Z})$, where as is
customary ${\rm Im} \,\tau >0.$   
The universal cover for $M$ can be taken to be $\widehat M = {\mathbb
C}$ and the group of deck 
transformations can be identified
with $G = {\mathbb Z} + \tau {\mathbb Z}$ in a natural way.  Then a
rank-$r$ flat vector bundle $\zeta$ over $M$ 
as discussed in the Introduction 
corresponds to a representation $\zeta :\mathbb{Z}+\tau 
\mathbb{ Z} \rightarrow  GL(r,\mathbb{ C})$. A flat factor of
automorphy $\zeta$ is equivalent to a normalized flat 
factor of automorphy $\xi_{V}$ where the representation has the form
\begin{equation}\xi_{V}(1)=I_{r}\ :\ \xi_{V}(\tau) = V, 
\label{rep}\end{equation} with $V$ an invertible $r\times r$-matrix.
The form of $V$ will be further simplified below.
Meromorphic matrix trivializations of $\xi_{V}$ correspond to
$r\times r$-meromorphic matrix functions $F$ on $\mathbb C$ 
that satisfy \[  F( u+m+n\tau ) = \xi_{V} (m+n\tau ) F(u)=V^{n}F(u).\]

\par\textbf{The\ scalar\ case:}  As is well known, in the case $r=1,$
meromorphic trivializations of  flat factors of 
automorphy on $\mathbb{C}$  (or, equivalently, line bundles) can be
given explicitly using theta functions. To see this, 
for $\alpha \neq 0,$ let $\xi_{\alpha}$ be the degree-zero line
bundle corresponding to the character 
$\xi_{\alpha}(m+\tau n) = \alpha^{n}$ and let $\theta$ be the
classical theta function defined for $u\in\mathbb{C}$ 
by 
\begin{equation}   \label{theta}
 \theta (u) = \sum_{n\in \mathbb{Z}} \exp2\pi
i\left(\frac{n^{2}\tau}{2} +n u\right). 
\end{equation}
The function $\theta$ 
has the following automorphic behavior
 \[\theta (u+1) = \theta (u)\ :\ \theta (u+\tau ) = \exp{ (-\pi i\tau
- 2\pi iu)}\theta (u).\] 
As a consequence, when $\alpha \neq 1,$ the function
 \begin{equation} 
 f_{\alpha}(u) = \frac{\theta (u - l_{\alpha})}{\theta (u)},
\label{scalartriv} 
 \end{equation} 
 where   $l_{\alpha}=   \frac{\log\alpha}{2\pi i}$, provides a
meromorphic trivialization of the factor of automorphy 
 $\xi_{\alpha}.$ The function $f_{\alpha}$ has a simple zero at
points in $\Delta - l_{\alpha} + \mathbb{Z}+\tau 
\mathbb{ Z}$ and simple poles at points in $\Delta + \mathbb{Z}+\tau 
\mathbb{ Z},$ where $\Delta = \frac{1}{2} + \frac{1}{2}\tau.$ In the
case $\alpha =1,$ nonzero meromorphic functions 
on $M$ provide trivializations. In particular, if \[ z_{1},\cdots
,z_{N}\ :\ w_{1}\cdots , w_{N} \] are $2N$ points on 
$M$ with \[z_{1}+\cdots +z_{N} -w_{1}-\cdots - w_{N}\equiv 0 \mod
(\mathbb{Z}+\tau\mathbb{Z}),\] then when considered on 
$\mathbb{C}$ the function  \[ f(u)= \prod_{i=1}^{N}\frac{\theta
(u+\Delta - z_{i})}{\theta (u+\Delta -w_{i})}\] provides 
a trivialization of $\xi_{1}$ with zeros and poles at points over the
divisor $(f)$ given on $M$ by  
\[ (f)=z_{1}+\cdots +z_{N} -w_{1}-\cdots - w_{N}.\]
\smallskip

 \par \textbf{The \ matrix\ case:} Less well known is that one can
use theta functions with matrix arguments 
 to obtain trivializations of flat matrix factors of automorphy over
$M=\mathbb{C}/\mathbb{Z}+\tau\mathbb{Z}$ 
 that is analogous to the line bundle case as follows. 

For $A$ an $r\times r$-matrix, define the matrix-valued theta
function on $\mathbb{C}$ by 
\begin{equation}
    \Theta_{A}(u) =\theta (uI_{r} + A)= \sum_{n\in \mathbb{Z}}
\exp2\pi i\left(\frac{n^{2}\tau}{2}I_{r} +n( uI_{r}+A)\right). 
    \label{mtheta} \end{equation}  The function $\Theta_{A}$ has the
following automorphic behavior:
$$
    \Theta_{A} (u+1) =
\Theta_{A}(u) \ :\ \Theta_{A}(u+\tau) =\exp{(-2\pi iA)}\exp({-\pi
i\tau - 2\pi iu)}\Theta_{A}(u). 
$$
As a result the meromorphic matrix function
\begin{equation}\overline{\Theta}_{A} =
(\theta)^{-1}\Theta_{A}\end{equation} 
is a trivialization of the rank $r$ flat factor of automorphy
$\xi_{A}$ corresponding to the representation of $\mathbb{Z} 
+ \tau\mathbb{Z}$ given by \begin{equation} \xi_{A}(1)=I_{r}\ :\
\xi_{A}(\tau ) =\exp(-2\pi iA).\end{equation}  
If one sets $V=\exp(-2\pi iA),$ then $G_{V} = \overline{\Theta}_{A}$
is a trivialization of the factor of
automorphy $\xi_{V}$ corresponding to the representation (\ref{rep}).
Note with these identifications, 
\begin{equation}G_{V}(u) =\theta^{-1}(u) \sum_{n\in \mathbb{Z}}V^{-n}
\exp2\pi i\left(\frac{n^{2}\tau}{2} +n u\right). 
    \label{niceform}\end{equation}

 The representation $\xi_{V}$ can be assumed to be
in the form 
\begin{equation} \xi_{J} (1)=I_{r}\ :\ \xi_{J} (\tau )=J,
\label{flat} 
\end{equation}
where the matrix $J$ is in Jordan canonical form.
 In particular, $J$ is a
direct sum of matrices of the form 
\begin{equation}  \label{jordan}
  J_{\alpha} = \begin{bmatrix} \alpha & 1 & & \\ & \ddots & \ddots & \\
  & & \alpha & 1 \\ & &  & \alpha \end{bmatrix}
\end{equation}
(with unspecified entries equal to 0) where $\alpha \neq 0$.

Providing trivializations of flat factors of automorphy $\xi$ on
$\mathbb{C}$ can be accomplished by providing 
trivializations of the representations of $\mathbb{Z}+\tau\mathbb{Z}$
corresponding to  irreducible factors, i.e., to
trivializations of $\xi_{J_\alpha}$ where  $J_{\alpha}$ is
the $r\times r$-matrix given by (\ref{jordan}).  
It should be noted that the factor of automorphy $\xi_{1}$
corresponds to the unique equivalence class of 
indecomposable rank $r$ vector bundles of degree $0$ on $M$ that have
a holomorphic section. These bundles 
form the basic building blocks used by Atiyah in the classic paper
\cite{Atiyah} describing vector bundles 
over an elliptic curve.  

The trivialization $ G_{J_{\alpha}}$ given by (\ref{niceform}) can be
expressed explicitly as follows. Write 
$J_{\alpha} = \alpha I + Q,$ where $Q$ is the $r\times r$
nilpotent matrix 
$$
Q= \begin{bmatrix} 0 & 1 & & \\ & \ddots & \ddots & \\
  & & 0 & 1 \\ & &  & 0 \end{bmatrix}
$$
Using the binomial expansion for $(\alpha + x)^{-n},$ one obtains 
\begin{align*}
 & J_{\alpha}^{-n} = (\alpha I + Q)^{-n} = \alpha^{-n} ( I - 
 (-\alpha^{-1} Q))^{-n} \\
 & =  \sum_{j=0}^{r-1}
\frac{(-1)^{j} n(n+1)\cdots (n+j-1)}{\alpha^{n+j} j!} Q^{j}.
 \end{align*}
Substitution of this expression for $J_{\alpha}^{-n}$ as $V^{-n}$
into (\ref{niceform}) combined with an 
interchange of the order of summation then gives
$G_{J_\alpha}(u) = (\theta (u))^{-1} \cdot \widetilde
G_{J_\alpha}(u)$ where
\begin{align} 
& \widetilde G_{J_\alpha}(u)   \notag  \\
& =  \sum_{j=0}^{r-1}\frac{(-1)^{j}}{\alpha^{j} j!} \left(\sum_{n\in
\mathbb{Z}}n(n+1)\cdots (n+j-1) \exp2\pi 
  i\left(\frac{n^{2}\tau}{2} +n(u- l_{\alpha})\right)\right)Q^{j},
\label{G}
\end{align}
where $l_{\alpha}=\frac{\log\alpha}{2\pi i}$.

 For $j=0,\ldots , r-1,$ introduce the differential operators
$L_{j},$ with $L_{0}=I$ and
 $$ 
  L_{j} = \frac{(-1)^{j}}{\alpha^{j}j!}D'(D'+1)\cdots(D'+j-1)
 $$
  where $D'=\frac{1}{2\pi i}\frac{d}{ du}$. Since
   \[L_{j}\exp(2\pi inu) = \frac{(-1)^{j} n(n+1)\cdots
(n+j-1)}{\alpha^{j} j!}\exp(2\pi inu)\] 
   one concludes that
\[ G_{J_\alpha}(u) =(\theta)^{-1}(u)\sum_{j=0}^{r-1}
(L_{j}(D')\theta)(u-l_{\alpha})\, Q^{j}\]
 or, equivalently,
\begin{equation} \label{tatiyah}
    G_{J_\alpha}(u)   = (\theta)^{-1}(u) 
  \begin{bmatrix}  \theta  & L_{1}[\theta]& L_{2}[\theta] & \cdots  &
L_{r-1}[\theta] \\ 
 & \theta  &  L_{1}[\theta]& \ddots  & \vdots  \\ 
 & & \ddots  & \ddots  &  L_{2}[\theta] \\ 
 &  &  &\ddots  & L_{1}[\theta]\\ 
 &  &  &  &\theta
\end{bmatrix}(u-l_{\alpha}). 
\end{equation}
The determinant of the trivialization $G_{J_\alpha}$ of
$\xi_{J_\alpha}$ has a zero of order $r$ at points in  
$\Delta + l_{\alpha} +\mathbb{Z} + \tau\mathbb{Z}$ and a pole of
order $r$ at points in $\Delta +\mathbb{Z}+\tau\mathbb{Z}.$ 
These zeros and poles correspond to the simple zeros and poles of the
diagonal elements at these points.
Any other trivialization of $\xi_{\alpha}$ will have the form
$G_{\alpha}K,$ where $K$ is a nondegenerate meromorphic 
$r\times r$-matrix function on $M.$

\textbf{II.\ Single\ pole\ trivializations:}  Another natural
trivialization of the rank-$r$ vector bundle $\xi_{J_\alpha}$ 
can be constructed as follows. 
For $a\in \mathbb{C},$ let 
\[
\lambda _{a}(u)=-\frac{1}{2\pi i}\frac{\theta ^{\prime
}(u-a-\frac{1}{2}-
\frac{\tau }{2})}{\theta (u-a-\frac{1}{2}-\frac{\tau }{2})},\ u\in
\mathbb{C}.
\]
The salient properties of $\lambda _{a}$
are that the poles of this function are simple poles at points in 
$a+\mathbb{Z} +\tau \mathbb{Z},$ and that 
\begin{equation}  \label{lambdaauto}
\lambda _{a}(u+1)-\lambda _{a}(u)=0, \quad \lambda _{a}(u+\tau
)-\lambda _{a}(u)=1.
\end{equation}

Note that $\lambda_{a} $ is a (multiple of) the translate
$\lambda_{a} (u) = \zeta (u-a)$ of the Weierstrass function 
\[ \zeta (u) = \frac{1}{u} + \sum_{(m,n)\neq (0,0)} \left(
\frac{1}{u-\Omega_{m,n}}+
\frac{1}{\Omega_{m,n}}+\frac{u}{\Omega_{m,n}^2}\right),\]
where $\Omega_{m,n} = m+\tau n$ (see e.g.~Problem \#5 page 279 and 
Problem \#1 page 309 in \cite{SS}).

For fixed $\alpha \neq 0$ we will use a sequence of polynomials
$\{p_{n}\}_{n\geq 0}$ with $p_{n}$
of degree $n$, $p_{0}(u)=1$,  which satisfy 
\begin{equation}  \label{pn-1}
p_{n+1}( u+1) -p_{n+1}( u) =\alpha
^{-1}p_{n}(u),\
u\in \mathbb{C}.
\end{equation}
Such a sequence of polynomials is uniquely determined if
one requires that $p_{n}(0)=0$ for $n\geq 1$, and $p_{0}=1$. These
polynomials are then given by
\begin{equation}   \label{pn-2}
p_{n} (u) = \frac{\alpha ^{-n}}{n!} (u-(n-1))\cdots (u-2)(u-1)u,\
n\geq 1.
\end{equation}
Note that the relations \eqref{lambdaauto} satisfied by $\lambda_a$
implies that the composite functions
$p_n\circ \lambda_a$ satisfy the relations
\begin{align}  
&  p_{n+1}(\lambda_a(u+1)) = p_n(\lambda_a(u)), \notag \\
&  p_{n+1}(\lambda_a(u+\tau)) = p_{n+1}(\lambda_a(u)) + \alpha^{-1}
p_n(\lambda_a(u)).
 \label{relations'}
\end{align}
If we define the $r\times r$ matrix function $P_r(\lambda)$ by
\[
P_{r}(\lambda )=
\begin{bmatrix}
p_{0}(\lambda )  & p_{1}(\lambda ) & p_{2}(\lambda ) & \cdots  &
p_{r-1}(\lambda) \\ 
 & p_{0}(\lambda ) & p_{1}(\lambda ) & \ddots  & \vdots  \\ 
 &  & \ddots  & \ddots  & p_{2}(\lambda ) \\ 
 &  &  &  p_{0}(\lambda ) &  p_{1}(\lambda ) \\ 
 &  &  &  &  p_{0}(\lambda ) \end{bmatrix},
\]
then,  for $a\in \mathbb{C} $ and $\alpha \neq 0$ fixed, it is easily
seen from the relations \eqref{relations'} that
 the meromorphic matrix function
\begin{equation}
G_{r}(u) = P_{r}(\lambda_{a}(u)) \label{basic}
\end{equation}
satisfies
\[
G_r(u+1) = G_r(u), \quad G_r(u+\tau) = \alpha^{-1} J_\alpha G_r(u),
\]
and hence, more generally,
 \[  
 G_{r}( u+m+n\tau ) = \alpha^{-n}\xi_{J_\alpha} (m+n\tau )G_{r}(u).
 \]
Thus any trivialization $F_r$ of $\xi_{J_\alpha}$ will have the form
\begin{equation}
F_r = G_r S_r \label{general} 
\end{equation}
where $S_r$ is a nondegenerate $r\times r$-meromorphic matrix
function satisfying
\begin{equation}
S_{r}(u+m+n\tau) = \alpha^{n} S_{r}(u). \label{right}
\end{equation}

When $\alpha = 1$, the matrix function $G_r$ given by (\ref{basic})
already provides a trivialization of $\xi_{J_1}.$ 
When $\alpha \neq 1$ a trivialization of $\xi_{J_\alpha}$ can be
given by $F_r$ of the form (\ref{general}), where $S_{r}$ 
is a diagonal meromorphic matrix function on $\mathbb C$ with
diagonal entries $s_{11},\ldots, s_{rr}$  
satisfying $s_{ii}(u+m+n\tau ) = \alpha^n s_{ii}(u),\ i=1,\ldots, r.$
These trivializations of $\xi_{J_\alpha}$ 
(when $r>2$) have high order poles at points in $a+\mathbb{Z} +
\tau\mathbb{Z}$. 

\section{Meromorphic trivializations with simple null-pole
structure when $g=1$}  \label{S:simple}

As mentioned in the Introduction, one of the goals of the present
paper is to find trivializations of flat factors of 
automorphy on $\widehat{M}$ which have simple null-pole structure. 
Throughout this section we again assume that $M$ is an elliptic curve
(i.e., $M$ has genus 1).

Suppose that the meromorphic matrix function $F$ is a trivialization
of the flat factor of automorphy $\xi$.
The condition that $F$ have {\em simple null-pole structure} must be
defined precisely. Let  
$z_{1},\ldots ,z_{N}:w_{1},\ldots ,w_{N}$ be $2N$ distinct
points in ``the'' fundamental domain for 
$\mathbb{C}/({\mathbb Z}+\tau{\mathbb Z})$. Let 
$\mathbf{x}_{1},\ldots ,\mathbf{x}_{N}:\mathbf{y}_{1},\ldots
,\mathbf{y}_{N}$ be nonzero (column) vectors in 
${\mathbb C}^{r}$. An $r\times r$-meromorphic matrix function $F$ on
$\mathbb{C}$ is said to interpolate the 
simple null-pole data
\begin{equation}
{\mathcal D}:(z_{1},\mathbf{x}_{1}),\ldots
,(z_{N},\mathbf{x}_{N}):(w_{1},\mathbf{y}_{1}),\ldots
,(w_{N},\mathbf{y}_{N})  \label{data'}
\end{equation}
if the following conditions are satisfied:

\begin{enumerate}
\item  The null-pole divisor of $\det F$ on the fundamental domain is 
$$
(\det F)= z_{1}+\cdots +z_{N}-w_{1}-\cdots -w_{N}.
$$

\item  The only poles of any entry of $F$ are at most simple poles at
points of $w_{i}+\mathbb{Z}+\tau \mathbb{Z}$, $i=1,\ldots, N.$

\item  The matrix function $F$ is analytic at points in
$z_{i}+\mathbb{Z}+\tau 
\mathbb{ Z}$ (already a consequence of condition (2) above) and the
vector 
$\mathbf{x}_{j}$ spans the right kernel of $F$ at these points,
$j=1,\ldots, N.$

\item  The matrix function $F^{-1}$ is analytic at points in
$w_{i}+\mathbb{Z}+\tau 
\mathbb{ Z}$ and $\mathbf{y}_{i}^{\top}
$ (here $\top$ denotes transpose) spans the left kernel of $F^{-1}$, at
these points 
$i=1,\ldots ,N.$
\end{enumerate}
When  all these conditions are satisfied, it can be shown that the
poles of $F$ are all simple and occur precisely at 
the points $w_1, \dots, w_N$
with rank 1 residue at $w_i$  having left image spanned by the vector
${\mathbf y}_i^\top$ for each $i$, while the poles 
of $F^{-1}$ are all simple and occur
 precisely at the points $z_1, \dots, z_N$ with rank-1 residue at
$z_i$ having right image spanned by the vector 
 ${\mathbf x}_i$ for each $i$. For further information, see
\cite{BGR,Kats}.

If $F$ satisfies (1)--(4), then we say that $F$ has {\em simple
null-pole data structure} and that $\mathcal{D}$ 
given by \eqref{data}  is the  {\em null-pole divisor of $F$}, or
that $F$ interpolates the data set ${\mathcal D}$.

\smallskip

We will establish the following:

\begin{thm}
Given a rank $r$ flat factor of automorphy $\xi $ over the elliptic
curve 
$M=\mathbb{C}/(\mathbb{Z}+\tau \mathbb{Z})$, there exist a simple
null-pole data set
$\mathcal{D}$ of the form (\ref{data'}) and a trivialization $F$ of
$\xi$ such that $F$  interpolates ${\mathcal D}$. 
In general, the size  $N=N_{r}$ of ${\mathcal D}$  can be taken to 
satisfy $N_{r} < 2r.$  Moreover, if 
$\xi $ does not have a direct summand equivalent to a representation
$\xi _{J_1}$,  then it is possible to take $N_{r}=r.$ 
\end{thm}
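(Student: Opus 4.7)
The plan is to reduce the theorem to the case of a single Jordan block $\xi_{J_\alpha}$ via direct sums, and then to produce the required trivialization for each block by an explicit construction using the ingredients of Section~\ref{S:theta}.

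Putting $\xi$ in the normalized Jordan form \eqref{flat}, decompose $J$ as a direct sum of Jordan blocks $J_{\alpha_i}$ of sizes $r_i$, $\sum r_i = r$.  If, for each $i$, there is a trivialization $F_i$ of $\xi_{J_{\alpha_i}}$ interpolating a simple null-pole data set $\mathcal D_i$ of size $N_i$, then (after translating each $\mathcal D_i$ in the $u$-variable, which leaves the factor of automorphy unchanged but moves the underlying points on $M$) one may assume the supports of the $\mathcal D_i$ on $M$ are pairwise disjoint.  The block-diagonal matrix $F = F_1 \oplus \cdots \oplus F_s$ is then a trivialization of $\xi_J$ interpolating the union $\mathcal D = \bigsqcup_i \mathcal D_i$, of size $N = \sum_i N_i$, with the interpolation vectors embedded into $\mathbb C^r$ via the block coordinates.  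It therefore suffices to show that one can take $N_i = r_i$ when $\alpha_i \neq 1$ and $N_i \leq 2 r_i - 1$ when $\alpha_i = 1$.

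For a single block $\xi_{J_\alpha}$ with $\alpha \neq 1$, I look for $F$ upper triangular with diagonal entries $F_{jj}(u) = f_\alpha(u + \Delta - w_j)$, the translates of \eqref{scalartriv} whose unique simple pole sits at the prescribed $w_j$ and whose unique simple zero sits at $z_j = w_j + l_\alpha$.  The automorphy condition $F(u+\tau) = J_\alpha F(u)$ gives, for $i < j$, the first-order recurrence
\[
F_{ij}(u + \tau) - \alpha\, F_{ij}(u) \;=\; F_{i+1,j}(u), \qquad F_{ij}(u + 1) = F_{ij}(u),
\]
which I solve by induction moving up each column ($i = j-1, j-2, \dots, 1$).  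A particular solution has the shape $\mu(u)\,f_\alpha(u + \Delta - w_j)$, where $\mu$ is built from the functions $\lambda_{w_k}$ of \eqref{lambdaauto} (the characteristic automorphy $\lambda(u + \tau) - \lambda(u) = 1$ supplying the constant needed to absorb the inhomogeneity); a homogeneous correction, namely a scalar multiple of a further translate of $f_\alpha$, is then added to cancel any spurious simple pole this particular solution introduces at a point $w_k \ne w_j$.  The net effect is that the entire $j$-th column of $F$ has poles concentrated only at $w_j$, the upper-triangular shape then forces the residue at $w_j$ to have rank one, and $\det F = \prod_j f_\alpha(u + \Delta - w_j)$ has exactly the desired divisor, yielding simple null-pole data of size $N_i = r_i$.

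When $\alpha = 1$ this procedure must be modified because the simplest scalar trivialization of $\xi_1$ is a nonconstant elliptic function, which carries at least two simple poles and two simple zeros.  I instead take $F$ upper triangular with $F_{11}$ such a minimal elliptic function, $F_{22} = \cdots = F_{rr}$ equal to nonzero constants, and the superdiagonal entries obtained by iteratively solving the corresponding $\tau$-difference equations, now using the $\lambda_a$-functions as before and, when the iterated difference equations force double poles (e.g.\ contributions of the form $\lambda_a^2$), combining with Weierstrass-$\wp$ translates to cancel those double poles.  Choosing the auxiliary translation points to coincide with already-chosen $w_j$'s whenever possible keeps the total number of distinct points within $2 r_i - 1$.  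Summing over blocks: if $\xi$ has no $\xi_{J_1}$ summand then $N = \sum r_i = r$, establishing the ``moreover'' clause; if $\xi$ has $k \geq 1$ summands of $\xi_{J_1}$-type of total rank $s$, then
\[
N \;\leq\; (r - s) + \sum_{\alpha_i = 1}(2 r_i - 1) \;=\; r + s - k \;\leq\; 2r - k \;<\; 2 r,
\]
proving the general bound.  The principal obstacle lies in the $\alpha = 1$ case: one must carefully track the orders of the poles produced by the iterated $\tau$-difference equations and the corrections needed to bring them down to simple poles, while simultaneously verifying that the residues which survive have rank one.
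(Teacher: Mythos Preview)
Your reduction to Jordan blocks via block-diagonal trivializations with disjoint supports is correct and is exactly the (unstated) first step in the paper.

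For the per-block construction, your approach differs from the paper's. The paper writes $F_r = G_r S_r$ with $G_r$ the fixed upper-triangular polynomial-in-$\lambda_a$ matrix from Section~\ref{S:theta} and builds $S_r$ inductively by appending a new row and column at the top-left; the delicate work goes into choosing the new entries of $S_{r+1}$ so that both $F_{r+1}$ and $F_{r+1}^{-1}$ have the correct pole structure. Your approach instead works column by column directly on $F$, solving the twisted difference equation $F_{ij}(u+\tau) - \alpha F_{ij}(u) = F_{i+1,j}(u)$ upward.

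There is a genuine gap in your $\alpha \neq 1$ argument. You say the spurious poles introduced by the particular solution $\mu f_\alpha$ are \emph{simple} and can be cancelled by a single translate of $f_\alpha$. This is true only at the first step ($i = j-1$). At step $i = j-k$ the particular solution involves $p_k(\lambda_a)$, a degree-$k$ polynomial in $\lambda_a$, hence carries a pole of order $k$ at $a$; a translate of $f_\alpha$ has only a simple pole and cannot cancel this. What is actually needed is a section of $\xi_\alpha$ with \emph{prescribed principal part} of order $k$ at $a$ and no other pole. Such a section exists (Riemann--Roch on the elliptic curve gives $h^0(\xi_\alpha(ka)) = k$ when $\alpha \neq 1$), and is precisely the pole-removal device \eqref{remove pole} the paper invokes. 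So the repair is available, but your sketch as written does not supply it.

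The $\alpha = 1$ sketch has a more serious issue. With $F_{22}, \dots, F_{rr}$ taken constant and the $\lambda$-poles placed at the already-chosen $w_j$'s, columns $j \geq 2$ of $F$ acquire poles at those same points, in addition to the poles already sitting in column~1 from $F_{11}$. The residue matrix at such a $w_j$ then has nonzero entries in several rows coming from independent columns and is generically of rank~$2$, violating the simple null-pole structure. The paper's induction avoids this by tracking $F_{r+1}^{-1}$ explicitly and adjusting $\mathbf{s}_{r+1}$ to kill the offending poles of $F_{r+1}^{-1}$ at the $w_i$'s; your sketch does not address the rank-one residue constraint at all. Incidentally, the paper obtains $N_r = 2(r-1)$ for a size-$r$ block with $\alpha = 1$, slightly sharper than your $2r-1$, though either bound suffices for the theorem.
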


This theorem follows directly from the next proposition which is
established via a constructive inductive proof based 
on the rank $r$.

\begin{prop}
For $\alpha \neq 0$ there exist a simple null-pole data set
$\mathcal{D}$ of the
form (\ref{data}) and a trivialization $F_{r}$ of the form
(\ref{general}) of $\xi_{J^{(r)}_\alpha}$ which interpolates 
${\mathcal D}$.  If $\alpha =1$ , $N = N_{r}$ can be taken in the
form $N_{r}=2(r-1)$. If $\alpha \neq 1,$ then it is 
possible to take $N_{r}=r.$
\end{prop}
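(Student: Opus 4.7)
The plan is to proceed by induction on $r$. For the base case $r=1$, when $\alpha=1$ take $F_1\equiv 1$, whose (empty) null-pole divisor gives $N_1=0=2(r-1)$. When $\alpha\neq 1$ take $F_1=f_\alpha$ from \eqref{scalartriv}, which is a scalar $\xi_\alpha$-trivialization with exactly one simple zero and one simple pole, so $N_1=1=r$.

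For the inductive step, I partition
\[
J_\alpha^{(r)} = \begin{bmatrix} J_\alpha^{(r-1)} & e_{r-1} \\ 0 & \alpha \end{bmatrix},
\]
where $e_{r-1}\in\mathbb{C}^{r-1}$ is the last standard basis vector, and look for $F_r$ in the matching block upper-triangular form
\[
F_r(u) = \begin{bmatrix} F_{r-1}(u) & \mathbf{b}(u) \\ 0 & t(u) \end{bmatrix},
\]
with $F_{r-1}$ the trivialization furnished by the induction hypothesis, $t$ a scalar trivialization of $\xi_\alpha$, and $\mathbf{b}\colon\mathbb{C}\to\mathbb{C}^{r-1}$ a meromorphic column vector. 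The automorphy requirement $F_r(u+m+n\tau)=\xi_{J_\alpha^{(r)}}(m+n\tau)F_r(u)$ reduces via the block product to the pair of equations
\[
\mathbf{b}(u+1)=\mathbf{b}(u), \qquad \mathbf{b}(u+\tau)=J_\alpha^{(r-1)}\mathbf{b}(u)+t(u)\,e_{r-1}.
\]
Writing $\mathbf{b}=(b_1,\ldots,b_{r-1})^\top$, the last entry $b_{r-1}$ satisfies the scalar affine equation $b_{r-1}(u+\tau)=\alpha b_{r-1}(u)+t(u)$, for which a particular solution of the form $\lambda_a(u)t(u)/\alpha$ (or $\lambda_a(u) t(u)$ when $\alpha=1$) is available thanks to \eqref{lambdaauto}; the components $b_{r-2},\ldots,b_1$ are then produced recursively from analogous scalar affine equations whose inhomogeneous terms are driven by the $b_{k+1}$'s via the superdiagonal $1$'s of $J_\alpha^{(r-1)}$.

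The increment in the null-pole count is governed by the choice of $t$. For $\alpha\neq 1$, take $t$ to be a translate of $f_\alpha$ with its single simple pole and zero at fresh points $w_r,z_r$; this adds one pole and one zero, yielding $N_r=N_{r-1}+1=r$. For $\alpha=1$, the scalar $t$ must be a non-constant meromorphic function on $M$, hence has at least two simple poles and two simple zeros by the elliptic analogue of Liouville's theorem; choosing $t$ with exactly this minimum at fresh points produces $N_r=N_{r-1}+2=2(r-1)$.

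The main obstacle is arranging that $F_r$ has \emph{simple} null-pole structure, since the naive cascade solution produces a pole of order $r-k$ at the auxiliary point $u=a$ in the entry $b_k$, introducing spurious higher-order poles. The key freedom is that $\mathbf{b}$ is determined only up to addition of $F_{r-1}\mathbf{c}$ for an arbitrary meromorphic vector $\mathbf{c}$ on $M$ (equivalently, up to an arbitrary section of $\xi_{J_\alpha^{(r-1)}}$). One uses this freedom inductively to subtract off the principal part of $\mathbf{b}$ at $a$, absorbing it into the pole set $\{w_1,\ldots,w_{r-1}\}$ of $F_{r-1}$ and of $t$, so that after adjustment $\mathbf{b}$ has only simple poles at those same points. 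Finally, one verifies that the rank-one kernel directions required by conditions (2)--(4) of the simple null-pole definition are inherited from the inductive data and from the explicit form of $\mathbf{b}$ and $t$, choosing the new pole and zero points sufficiently generic to avoid spurious coincidences. The representation $F_r=G_rS_r$ demanded by \eqref{general} then follows by setting $S_r=G_r^{-1}F_r$ and checking the $\xi_\alpha$-automorphy of the resulting entries.
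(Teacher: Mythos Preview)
Your inductive strategy is sound and mirrors the paper's proof: both argue by induction on $r$, build $F_r$ in block upper-triangular form with the inductive $F_{r-1}$ as one block and a new scalar trivialization as the other, solve the resulting affine functional equation for the off-diagonal block using $\lambda_a$, and then repair the spurious higher-order pole at $a$ so that only simple poles at the $w_j$ survive. Your base case and your counting argument for why $t$ must be nonconstant when $\alpha=1$ match the paper's.

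There are, however, two genuine differences in execution worth noting. First, the paper adds the new scalar at position $(1,1)$ rather than $(r,r)$; this keeps the factorization $F_{r+1}=G_{r+1}S_{r+1}$ explicit throughout the induction (with $G_{r+1}$ automatically of the required form \eqref{Gr+1}), whereas you construct $F_r$ first and recover $S_r=G_r^{-1}F_r$ only afterward. Your observation that this $S_r$ automatically satisfies \eqref{right} is correct, so this is a matter of bookkeeping rather than substance. Second, and more substantively, the paper removes the pole at $a$ by adjusting with functions $s$ satisfying $s(u+\tau)=\alpha s(u)$ (see \eqref{remove pole}); when $\alpha\neq 1$ such an $s$ can have a \emph{single} pole at $a$, so no auxiliary pole is introduced. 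Your adjustment is by $F_{r-1}\mathbf c$ with $\mathbf c$ elliptic, and an elliptic function with nonzero residue at $a$ must carry a compensating pole elsewhere; you then have to place that pole at one of the existing $w_j$ and check that the product $F_{r-1}\mathbf c$ does not create a double pole there. This works (the rank-one residue of $F_{r-1}$ at $w_j$ forces the resulting residue of $F_r$ to remain rank one), but it is exactly the sort of verification you wave past with ``one verifies.'' The paper also performs a second adjustment---removing possible poles of the off-diagonal block of $F_{r+1}^{-1}$ at the old $w_j$---which you do not mention; in your block orientation the analogous issue is whether $-F_{r-1}^{-1}\mathbf b\,t^{-1}$ is analytic at the old $z_j$, and this too requires a short explicit check. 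None of these gaps is fatal, but a complete write-up would need to fill them in.
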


\begin{proof}
Consider first the case $r=1$.  If $\alpha = 1$, the constant
function $f(p) = 1$ is a trivialization of $\xi_1$.  
If $\alpha \ne 1$, as shown in (\ref{scalartriv}), one can use theta
functions to obtain a scalar function having 
one simple zero and one simple pole with factor of automorphy equal
to $\alpha$. 

Inductively
suppose now that the proposition has been established for
$\xi_{J_\alpha^{(r)}}$ for the case where $J^{(r)}_\alpha$ 
is the $r \times r$ matrix of the form  \eqref{jordan}.  In more
detail,  we suppose that it has been shown that there 
is a trivialization $F_{r}  = G_r S_r$  as in  \eqref{general} 
with $G_r$ of the form \eqref{basic} and $S_r$ satisfying
\eqref{right} such that $F_r$
interpolates a simple null-pole data set  $\mathcal{D}_{r}$ as in
\eqref{data} (so $F = F_r$ satisfies conditions (1)--(4)) 
with $N=N_{r}=r$ if $\alpha\neq 1$, and $N=N_{r}=2(r-1)$, if $\alpha =
1.$  It will be further assumed that the interpolation points 
$z_1,\ldots ,z_{N_{r}} : w_1,\ldots, w_{N_{r}}$ are different from
$a$. 
For convenience, we also assume that  $a=0.$.

Write $S=S_{r}$ in the form 
\[
S_{r}= \begin{bmatrix} \mathbf{s}_{r} \\  \mathbf{s}_{r-1} \\  \vdots
\\  \mathbf{s}_{1} \end{bmatrix} ,
\]
where $\mathbf{s}_{1},\ldots ,\mathbf{s}_{r-1},\mathbf{s}_{r}$ are
$r$-dimensional row vector functions. 

 We wish to construct a trivialization $F_{r+1}$ of
$\xi_{J^{(r+1)}_\alpha}$ of the form 
 \begin{equation}   \label{Fr+1}
   F_{r+1} = G_{r+1} S_{r+1}
 \end{equation}
 also as in \eqref{general}.  Thus  $G_{r+1}$ should have the form 
 \begin{equation}   \label{Gr+1}
  G_{r+1}(u) = \begin{bmatrix} 1 & {\mathbf p}(u) \\ 0 & G_r(u)
\end{bmatrix}
  \end{equation}
   with  $\mathbf{p}$ equal to the $r$-dimensional row vector function
$$
  \mathbf{p}(u)=  \begin{bmatrix}  p_{1}(\lambda_{0}(u)) &
p_{2}(\lambda_{0}(u)) & \ldots  & p_{r}(\lambda_{0}(u)) \end{bmatrix} 
 $$
where   $p_j$ ($j = 1, 2, \dots$) are the polynomials as in
\eqref{pn-1} and \eqref{pn-2}, and $S_{r+1}(u)$ should be an $(r+1) 
\times (r+1)$ matrix function satisfying \eqref{right} which we
assume to have the form
\begin{equation}  \label{Sr+1}
   S_{r+1} = \begin{bmatrix} s_0 & {\mathbf s}_{r+1}  \\ 0 & S_r
\end{bmatrix}.
\end{equation}
From \eqref{Fr+1} combined with \eqref{Gr+1} and \eqref{Sr+1} we get
\begin{equation}   \label{r+1}
F_{r+1}=G_{r+1} \begin{bmatrix} s_{0} & \mathbf{s}_{r+1} \\ 0 & S_{r}
\end{bmatrix}
=  \begin{bmatrix} s_{0} & \mathbf{s}_{r+1}+\mathbf{p}S_{r} \\  0
&G_{r} S_{r} \end{bmatrix}
\end{equation} 
where here we use the fact that $p_0(u) = 1$.

By assumption $F_r = G_r S_r$ trivializes $\xi^{(r)}_\alpha$ and has
a simple matrix null-pole divisor supported on $z_1 +
\cdots + z_{N_{r}} - w_1 - \cdots - w_{N_{r}}$. The goal is to choose
$s_0$ and ${\mathbf s}_{r+1}$ so that 
\begin{enumerate}
\item[(i)] 
$F_{r+1}$ trivializes $\xi_{J^{(r+1)}_\alpha}$,  and 
\item[(ii)] $F_{r+1}$ has a simple matrix null-pole divisor with one
additional pole and zero, if $\alpha \ne 1$ and two
additional poles and zeros, if $\alpha = 1.$
\end{enumerate}

To achieve condition (i), by the analysis in Section 2 we need only
guarantee that $S_{r+1}$ satisfies \eqref{right}.  
As $S_r$ satisfies \eqref{right} by the induction hypothesis, this
condition reduces to the two conditions
 \begin{equation}
 s_{0}(u+1)=s_{0}(u),\ s_{0}(u+\tau )=\alpha s_{0}(u). \label{scalar} 
 \end{equation}
 and
 \begin{equation} \label{vector}
\mathbf{s}_{r+1}(u+1)=\mathbf{s}_{r+1}(u),\ \
\mathbf{s}_{r+1}(u+\tau)=\alpha\mathbf{s}_{r+1}(u)
\end{equation}
 
 We therefore take the scalar function $s_{0}$ to be a meromorphic
function on $\mathbb{C}$ satisfying \eqref{scalar}.
 To help meet requirement (ii), we arrange that the 
 null-pole divisor $(s_{0})$ of $s_{0}$ is as small as possible; in
case $\alpha = 1$, we can arrange that
$$
(s_{0})=\zeta _{1}+\zeta _{2}-\pi _{1}-\pi _{2}
$$
where the points $\zeta_{1},\zeta_{2},\pi_{1},\pi_{2}$  are chosen
disjoint from points in the data $\mathcal{D}_{r}$ and $a=0.$  
In the case
where $\alpha \neq 1$, we shall take the divisor  $(s_{0})$ to be of
the simpler form 
$$
(s_{0})=\zeta _{1}-\pi _{1}.
$$

The vector function $\mathbf{s}_{r+1}$ is an $r$
dimensional meromorphic row vector function on $\mathbb{C}$ that
satisfies 
\begin{equation} \label{vector'}
\mathbf{s}_{r+1}(u+1)=\mathbf{s}_{r+1}(u),\ \
\mathbf{s}_{r+1}(u+\tau)=\alpha\mathbf{s}_{r+1}(u)
\end{equation}
 that remains to be chosen.

From the automorphic properties of $F_{r+1}$ (as also can be seen
directly from \eqref{scalar} and \eqref{vector}) we have
\begin{equation} \label{key1}
 \mathbf{s}_{r+1}(u+1)+\mathbf{p}(u+1)S_{r}(u+1) =
\mathbf{s}_{r+1}(u)+\mathbf{p}(u)S_{r}(u)
\end{equation} and 
\begin{equation} \label{key}
 \mathbf{s}_{r+1}(u+\tau )+\mathbf{p}(u+\tau)S_{r}(u+\tau ) = \alpha
\mathbf{s}_{r+1}(u)+\alpha \mathbf{p(u)}S_{r}(u) +
 \overline{\mathbf{p}}(u)S_{r}(u).  \end{equation}
  where
  \[ 
 \overline{\mathbf{p}}(u) =  \begin{bmatrix}  p_{0}(\lambda_{0}(u)) &
p_{1}(\lambda_{0}(u)) &  \ldots  &
 p_{r-1}(\lambda_{0}(u)) \end{bmatrix}. 
 \]  
For later reference, note that the term $\overline{\mathbf{p}}S_{r}$
appearing in equation (\ref{key}) is 
the first row of $F_{r}=G_{r}S_{r}.$ In particular, the induction
assumption guarantees that this term does not 
have a pole at $u=0.$

To complete the achievement of requirement (ii),  the entries
$s_{r+1,j}$, ($j=1\hdots, r$) of $\mathbf{s}_{r+1}$
must be chosen so that the null-pole divisor of $F_{r+1}$ is simple.

For $j\ge 1$ the $(1, j+1)$-entry in $F_{r+1}$ is 
\begin{equation}  \label{entry} 
 s_{r+1,j} + ( p_1 \circ \lambda_0)  [S_{r}]_{1,j} +\cdots + ( p_r
\circ \lambda_0) [S_{r}]_{r,j}. 
\end{equation}
We first choose $s_{r+1,j}$ to remove any pole of $( p_1 \circ
\lambda_0)  [S_{r}]_{1,j} +\cdots + ( p_r \circ \lambda_0) 
[S_{r}]_{r,j}$ at $a=0$.
In the case $\alpha \neq 1,$ the removal of poles at $a=0$  can be
accomplished by using theta functions. In more detail, 
given a polynomial $p$ with 
$p(0)=0$, there exists a meromorphic function $s=s(u)$ with pole only
at $0$ satisfying 
\begin{equation}  \label{remove pole} 
s(u+m+\tau n)=\alpha ^{n}s(u),\ p(1/u)-s(u)\text{ is analytic at} \
u=0.
\end{equation}
This statement  can be
established as follows: Let $\Delta =\frac{1}{2}+\frac{1}{2}\tau .$
First
assume $\alpha \neq 1$ and let $\mu _{k}=\frac{1}{2\pi ik} \log
\alpha +\Delta $. For an appropriate choice of $C_{k}$ the 
function 
\[
q_{k}(u)=C_{k}\left( \frac{\theta (u-\mu _{k})}{\theta (u-\Delta
)}\right)
^{k} 
\]
satisfies $q_{k}(u+m+n\tau )=\alpha ^{n}q_{k}(u)$ and has a Laurent
expansion about $u=0$ which begins with the term $\frac{1}{u^{k}}$.
By
choosing appropriate linear combinations of the functions
$q_{k},...,q_{1}$,
for $k\geq 1,$ one can construct functions $p_{k}$ satisfying
$p_{k}(u+m+n\tau
	)=\alpha ^{n}p_{k}(u)$ with principal part $\frac{1}{u^{k}}$ at
$u=0$. Therefore, in the case $\alpha \neq 1,$ 
	the existence of a function $s$ with  property (\ref{remove pole})
follows.  

When $\alpha = 1,$ given a polynomial $p=p(u)$
with $p(0)=0,$ there exists an elliptic function $s$ such that the
principal
part of  $s$ at $u=0$ is $p( \frac{1}{u} ).$
This is easily established using the Weierstrass functions 
$\varsigma,\wp ,\wp^{\prime }, \dots$ associated with the lattice
$\mathbb{Z}+\tau
\mathbb{Z}$. Thus the existence of a function $s$ 
satisfying (\ref{remove pole}) follows.

The existence of $s$ satisfying (\ref{remove pole}) allows one to
choose $s_{r+1,j}$ so that the entries (\ref{entry})
do not have poles at $a=0.$ It follows from equations (\ref{key1})
and (\ref{key}) that $F_{r+1}$ does not have poles 
at points in $0+\mathbb{Z}+\tau\mathbb{Z}.$

At this point the (1,2)-entry \[-s_{0}^{-1}(\mathbf{s}_{r+1} +
\mathbf{p}S_{r})F_{r}^{-1}\] of $F_{r+1}^{-1}$ may have 
poles at points in $\{w_{1},\ldots,w_{N}\} +
\mathbb{Z}+\tau\mathbb{Z}.$ As above, by suitably modifying the
entries in 
$\mathbf{s}_{r+1},$ one can remove any of these poles at fixed
representatives of the points, while maintaining (\ref{vector}) . 
The identities (\ref{key1}) and (\ref{key}) along with the fact that
$\overline{\mathbf{p}} S_{r}F_{r}^{-1}$ is the row of  the 
identity matrix,  imply that the first row of $F_{r}^{-1}$ does not
have poles at any point in  
$\{w_{1},\ldots,w_{N}\} + \mathbb{Z}+\tau\mathbb{Z}.$  It follows
that $F_{r+1}$ has the correct pole structure at 
\[  \{w_{1},...,w_{N},\pi _{1},\pi _{2}\}+\mathbb{Z}+\tau \mathbb{Z}
\]
when $\alpha = 1$ and at  \[\{w_{1},\ldots ,w_{N},\pi
_{1}\}+\mathbb{Z}+\tau \mathbb{Z},\]
when $\alpha \neq 1.$

There remains to check that $F_{r+1}$ and $F_{r+1}^{-1}$ have the
correct zero struc\-ture. The zero divisor of the 
determinant of $F_{r+1}$ is $z_{1}+\cdots+z_{N}+\zeta_{1}$ in the
case $\alpha\neq 1$ and 
$z_{1}+\cdots+z_{N}+\zeta_{1}+\zeta_{2}$ in the case $\alpha =1.$ The
form 
$$
F_{r+1}^{-1} = \begin{bmatrix} 
s_{0}^{-1} & -s_{0}^{-1} (\mathbf{s}_{r+1}+\mathbf{p}S_{r})F_{r}^{-1} \\ 
0 & F_{r}^{-1}
\end{bmatrix}
$$
allows one to conclude that the entries of $F_{r+1}^{-1}$ have at
most simple poles and only at the zeros of the determinant of 
$F_{r+1}.$
At $ z_{j},\  j=1,\ldots, N$ the right kernel of $F_{r+1}(z_{j})$ is
spanned by
 $$
 \begin{bmatrix}
 -s_{0}^{-1}(z_{j}) (
\mathbf{s}_{r+1}+\mathbf{p}S_{r})(z_{j})(\mathbf{x}_{j}) \\ 
\mathbf{x}_{j} \end{bmatrix}. 
$$
 The zero divisor of the determinant of $F_{r+1}^{-1}$ is
$w_{1}+\cdots+w_{N}+\pi_{1}$ in the case $\alpha\neq 1$ and 
 $w_{1}+\cdots+w_{N}+\pi_{1}+\pi_{2}$ in the case $\alpha =1.$  At
$w_{j},\  j=1,\ldots, N$ the right kernel of 
 $F_{r+1}^{-1}(w_{j})$ is spanned by $[0,\mathbf{y}_{j}^{\top}].$ At
$\pi_{i}$  the right kernel is 
 spanned by the $r+1$-dimensional row vector
$$
\begin{bmatrix}  1   
    & -(s_{0}^{-1}(\mathbf{s}_{r+1}+\mathbf{p}S_{r})F_{r}^{-1})(\pi
_{i})F_{r}(\pi_{i})
    \end{bmatrix} 
$$
for $i = 1,2$.  The proof is complete.
\end{proof}

\section{Trivialization of factors of automorphy via matrix-divisor 
constructions}   \label{S:divisors}

\subsection{Right matrix null/pole divisors, matrix-divisor spaces, 
vector bundles, and factors of automorphy}  \label{S:4.1}

 A general theory of null-pole divisors of meromorphic matrix
functions, extending the case of simple null-pole 
 structure described in the preceding section, is presented in
\cite{BGR}. This general theory can be used to elucidate 
 the concrete results on trivializations described above, and 
 furthermore applies equally well to the higher genus case. Suppose $F$
is a meromorphic $r\times r$-matrix function 
 defined in a neighborhood of a point $q_{0}$ on a Riemann surface
with $\det F \not\equiv 0$.  In local coordinates 
 $(z,U)$ near $q_{0}$ with $z(q_{0})=0$ one introduces a local
(right) null-pole triple $\Upsilon$, that captures 
 the null-pole behavior of $F.$ This null-pole triple has the form 
 \begin{equation}  \label{triple}
  \Upsilon = ((B_{\zeta}, A_{\zeta}), (A_{\pi},C_{\pi}), S). 
  \end{equation} 
  In this triple, the pair of matrices, $(A_{\pi},C_{\pi}),$ where
$A_{\pi}$ is $n_{\pi}\times n_{\pi}$ and $C_{\pi}$ 
  is $n_{\pi}\times  r$  captures the pole behavior of $F$ at $q_{0}$
in the sense that for some matrix $\widetilde{B}$ 
  the matrix function
 \begin{equation}   \label{pole}
  F(q) - \widetilde{B} (z(q)I-A_{\pi})^{-1}C_{\pi}
   \end{equation}
 is analytic at $q_{0}$ and the matrix size $n_\pi$ is as small as
possible so that (\ref{pole}) holds. The pair $(A_{\pi}, C_{\pi})$ is 
called a {\em left pole pair} for $F$ since it is a {\em left null 
pair} for $F^{-1}$ in the sense that $(z(q) I - A_{\pi})^{-1} C_{\pi} 
F(q)^{-1}$ has analytic continuation to $q_{0}$ (the zero of 
$F^{-1}$, i.e., pole of $F$, is canceled by the pole of $(z(q) I - 
A_{pi})^{-1} C_{\pi}$ at $q_{0}$).
In a similar
 manner, the pair $(B_{\zeta}, A_{\zeta}),$ where $A_{\zeta}$ is
$n_{\zeta}\times n_{\zeta}$ and $B_{\zeta}$ is 
 $r\times n_{\zeta}$,  captures the zero behavior of $F$ at $q_{0}$ in
the sense that for some matrix $\widetilde{C}$, 
  \begin{equation}    \label{zero}
   F^{-1}(q) -  B_\zeta (z(q)I-A_{\zeta})^{-1} \widetilde C
    \end{equation}
 is analytic at $q_{0}$  with the matrix size $n_\zeta$ again as
small as possible.  We then refer to $(B_{\zeta}, A_{\zeta})$ as a 
{\em right null pair} for $F$ since $F(q) B_{\zeta} (z(q)I - 
A_{\zeta})^{-1}$ has analytic continuation to $q_{0}$ (the zero of 
$F$ is cancelled on the right by the pole of $B_{\zeta}(z(q) I - 
A_{\zeta})^{-1}$ at $q_{0}$).

The $n_{\pi}\times n_{\zeta}$ 
 matrix $S$, called the coupling matrix, is  that solution of the
Sylvester equation
 \begin{equation}   \label{Syl}
  A_{\pi}S-SA_{\zeta} = C_{\pi}B_{\zeta}
  \end{equation}
 which encodes the additional information needed to completely
specify the ${\mathcal O}_{q_{0}}$-row module 
 ${\mathcal O}^{1 \times r}_{q_{0}}  \cdot F$ (where ${\mathcal
O}_{q_{0}}$ is the space of germs of functions 
 holomorphic on a neighborhood of $q_0$).   Information equivalent to
knowledge of  the row module
 ${\mathcal O}^{1 \times r}_{q_{0}}  \cdot F$ is  knowledge of {\em
right germ} of the nondegenerate 
 meromorphic matrix function $F$ at $q_0$.  Here the {\em right germ}
of $F$ at $q_0$ is the equivalence class of 
 nondegenerate meromorphic matrix functions on a neighborhood of
$q_0$, where two such functions $F$ and $F'$ 
 are considered equivalent if there is  matrix function $H$
holomorphic and invertible on a neighborhood of $q_0$ 
 such that $F' = H F$.  
 
 The precise connection between the right null-pole triple $\Upsilon$ given as
in (\ref{triple}) and the module  
${\mathcal O}^{1 \times r}_{q_{0}} \cdot F$ is as follows (see \cite{BGR} as 
well as \cite{BRlocal, BRglobal} for the genus 0 case, \cite{Kats} 
for an expository treatment of the simple-multiplicity genus 0 case, 
and \cite{BCV} for the Riemann surface case). 
Suppose the local null-pole triple 
$\Upsilon$ of $F$ at $q_{0}\in M_{0}$ has the form (\ref{triple})
given in terms of the local coordinate $z$ at $q_{0}$.    Then
\begin{equation}   \label{local}
    \mathcal{O}^{1\times r}_{q_{0}}\cdot F = \cS(\Upsilon, q_{0},z)
\end{equation}
where we set
\begin{align}
\cS(\Upsilon, q_{0},z) = & 
\{ \mathbf{x}( z(q)I-A_{\pi}))^{-1}C_{\pi} +\mathbf{h}(q) \colon 
\mathbf{x}\in\mathbb{C}^{1\times n_{\pi}},\
\mathbf{h}\in \mathcal{O}^{1\times r}_{q_{0}} \notag \\ 
& \quad \text{such that } \mathbf{x}S =
\text{res}_{q_{0}}[\mathbf{h}(q)
B_{\zeta}(z(q)I-A_{\zeta})^{-1}]\}.
\label{nullpolesubspace} 
\end{align} 
The set $\cS(\Upsilon, q_{0},z)$ defined by [\ref{nullpolesubspace}]  
will be referred to as the singular subspace of the $0$-admissible
Sylvester 
data set $\Upsilon$ at the point $q _{0}$ with respect to local 
coordinate $z$. For simplicity we shall assume that it is understood 
that there is choice of local coordinate understood and write simply
$\cS(\Upsilon, q_{0})$.  It should be noted that the residue
appearing in this last equation is a matrix residue. 
We also point out that the role of the Sylvester equation \eqref{Syl} 
is to guarantee that the set $\cS(\Upsilon, q_{0},z)$ is indeed a 
left module over $\cO_{q_{0}})$.

 Moreover those quintuples of matrices
$\Upsilon$ \eqref{triple} which can arise as the local 
 null-pole triple for a nondegenerate  meromorphic matrix function
$F$ at a point $q_0$ are characterized as the 
 {\em $0$-admissible Sylvester data sets}, i.e., the quintuples of
matrices $\Upsilon$ \eqref{triple} such that:
 \begin{enumerate}
 \item[(a)]  $A_\pi$ is nilpotent (i.e., $\sigma(A_\pi) = \{0\}$) and
the input pair  $(A_\pi, C_\pi)$ is {\em controllable}  
 (i.e., ${\rm span}\{  {\rm Ran} A_\pi^j C_\pi \colon 0 \le j \le
n_\pi-1\} = {\mathbb C}^{n_\pi}$),
 
 \item[(b)]  $A_\zeta$ is nilpotent (i.e., $\sigma(A_\zeta) = \{0\}$)
and  the output pair $(B_\zeta, A_\zeta)$ is 
 {\em observable} (i.e.,
 $\bigcap_{j=0}^{n_\zeta - 1} {\rm Ker} B_{\zeta} A_\zeta^j = \{0\}$),
 
 \item[(c)] $S$ satisfies the Sylvester equation \eqref{Syl}.
 \end{enumerate}
 Different local null-pole triples for $F$ are related by a pair of
similarities of $A_{\pi}$ and $A_{\zeta}$. More 
 specifically, if $U$ and $V$ are invertible matrices of appropriate
respective sizes, then 
 \begin{equation} \label{sim}
 \widetilde{\Upsilon} = ((B_{\zeta}U, U^{-1}A_{\zeta}U),
(V^{-1}A_{\pi}V,V^{-1}C_{\pi}), V^{-1}SU) 
  \end{equation}
  is also a local  null-pole triple for $F$ and any other local
null-pole triple (with respect to the same local coordinate $z$ at 
$q_{0}$) has this form.  The $0$-admissible 
  Sylvester data sets $\Upsilon$ and $\widetilde{\Upsilon}$ given by
\eqref{triple} and \eqref{sim} are then said 
  to be similar Sylvester data sets.
 
\begin{remark} \label{R:liftdata}
    {\em In the discussion that follows  we will work
mainly with relatively automorphic meromorphic matrix functions 
on the universal cover $\widehat M$ of the Riemann surfaces $M$.
If we fix a choice 
  of coordinate for a sufficiently small neighborhood at a point
$q_0$ in a Fundamental Domain $M_{0}\subset\widehat{M}$ 
  and then use a deck transformation $T$ to  lift this coordinate to
a coordinate for a neighborhood of the point $T(q_0)$ 
  lying above $q_0$,  the relative-automorphy property of $F$
guarantees that the local null-pole triple at $T(q_0)$ for $F$ 
  is exactly the same as the local null-pole triple at $q_0$ for $F$
(since the invertible left factor of automorphy $\xi(T, u)$
can be absorbed into the free invertible matrix left factor  $H$ in
the definition of right matrix germ).  In summary, this compatible 
choice of local coordinates leads to identical null-pole triples for 
points over the same base point in the fundamental domain $M_{0}$.}
\end{remark}

 The reader should be aware however that our definitions here
correspond to {\em right null-pole triples} 
 (describing the {\em row} module
 ${\mathcal O}^{1 \times r}_{q_{0}} \cdot F$ or equivalently the {\em right}
matrix germ $\{ H \cdot F \colon H^{\pm 1} 
 \in {\mathcal O}^{r \times r}_{q_{0}} \}$) whereas the main focus in
\cite{BGR} is on {\em left null-pole triples} 
 which describe the {\em column} module $F \cdot {\mathcal
O}_{q_{0}}^{r \times 1}$ or equivalently  the 
 {\em  left} matrix germ $\{ F \cdot H \colon H^{\pm 1} \in {\mathcal
O}^{r \times r}_{q_{0}}\}$.

We now illustrate the characterization \eqref{local} with a simple 
example.

\begin{example} \label{E:1} {\em
    Let $f$ be the scalar function defined in
local coordinates near zero by $f(u)=u^2$. Note that 
\begin{equation}   \label{local'}
\cO_{0} \cdot 
f = \{ h(u) = \sum_{j=0}^{\infty} h_{j} u^{j} \text{ holomorphic at } 
0 \colon h_{0} = h_{1} = 0\}.
\end{equation}
On the other hand,
any local null-pole triple for $f$ at $u=0$ consists only of a zero 
pair, i.e., $n_{\pi} = 0$, and the condition  in \eqref{local}
collapses 
to
\begin{equation}  \label{local''}
    {\rm res}_{0}\,  [h(u) B_{\zeta}(uI - A_{\zeta})^{-1} ] = 0.
\end{equation}
One choice of null pair for $f$ at $u= 0$ is the pair $ (B_{\zeta},
A_{\zeta}) = 
\left(  \left[ \begin{smallmatrix} 1 & 0 \end{smallmatrix} \right] , \left[ 
\begin{smallmatrix}  0 & 1 \\ 0 & 0 \end{smallmatrix} \right]
\right)$,
where  equation
(\ref{zero}) holds with $\widetilde{C} = \left[ \begin{smallmatrix} 0
\\ 1 
\end{smallmatrix} \right]$. If $h(u) = h_{0}+h_{1}u +\cdots$ is
analytic 
at zero, then 
\begin{align*}
{\rm res}_{0} \left\{ h (u) \left[ \begin{smallmatrix} 1 & 0 
\end{smallmatrix} \right]   \left( \left[ \begin{smallmatrix} u & 0 \\ 0 & u 
\end{smallmatrix} \right] -  \left[ \begin{smallmatrix} 0 & 1 \\ 0 &
0 
\end{smallmatrix} \right] \right)^{-1} \right\}  &  =
\text{res}_{0} \left\{ h(u) \left[ \begin{smallmatrix} 1 & 0 \end{smallmatrix}
\right]
\left[ \begin{smallmatrix} u^{-1} & 
u^{-2} \\ 0 & u^{-1} \end{smallmatrix} \right]\right\} \\
& = \begin{bmatrix} h_{0} & h_{1}\end{bmatrix}.
\end{align*}
Combining \eqref{local'} with \eqref{local''}, we arrive at a 
verification of the characterization \eqref{local} for this case.}
\end{example}

\begin{remark} \label{R:1}  {\em
We define the {\em adjoint} $\Upsilon^{*}$ of a $0$-admissible
Sylvester data set
$$
\Upsilon = ((B_{\zeta}, A_{\zeta}), (A_{\pi},C_{\pi}), S)
$$
as the Sylvester data set  
\[
\Upsilon^{*} = ((C_{\pi}^{\top}, A_{\pi}^{\top}),(A_{\zeta}^{\top},
B_{\zeta}^{\top}),- S^{\top})
\]  
where $X^{\top}$ denotes the {\em transpose} of the matrix $X$.
If the meromorphic matrix $F$ locally interpolates $\Upsilon$ at
$q_{0},$ then $(F^{\top})^{-1}$ locally interpolates the divisor 
$\Upsilon^{*}$ at $q_0$ (with respect to the same local coordinate).}
\end{remark}

Let us introduce the notation 
\begin{equation} \label{data}
\mathcal{D} =\{ (\Upsilon_{q_{0}},q_{0}) \colon q_{0} \in M_{0} \}
\end{equation} 
for a collection of  $0$-admissible Sylvester
data sets 
\begin{equation}   \label{localdata}
\Upsilon_{q_{0}}  = ((B_{\zeta_{q_{0}}}, A_{\zeta_{q_{0}}}), 
(A_{\pi_{q_{0}}},C_{\pi_{q_{0}}}), S_{q_{0}})
\end{equation}
tagged to each point 
$q_{0} \in M_{0}$.  Here it is understood that for each point $q_{0}$
there is also specified a choice 
$z$ of local coordinate at $q_{0}$ in order for 
the formula \eqref{nullpolesubspace} for the singular subspace 
$\cS(\Upsilon, q_{0},z_{0})$ to be well defined.
For definiteness, the sizes of the matrices in $\Upsilon_{u_{0}}$
are specified as follows:  $A_{\pi_{u_{0}}}$ is $n_{\pi_{u_{0}}}\times
n_{\pi_{u_{0}}}$ and $C_{\pi_{u_{0}}}$ is $n_{\pi_{u_{0}}}\times  r,$ 
$A_{\zeta_{u_{0}}}$ is $n_{\zeta_{u_{0}}}\times n_{\zeta_{u_{0}}}$ and
$B_{\zeta_{u_{0}}}$ is $r\times n_{\zeta_{u_{0}}},$  and $S_{u_{0}}$
is a 
$n_{\pi_{u_{0}}}\times n_{\zeta_{u_{0}}}$ matrix for $i=1,\ldots,k$. 
We shall also impose the restriction that the local data 
$(\Upsilon_{u_{0}}, u_{0})$ are trivial (i.e., both 
$n_{\pi_{u_{0}}}= 0$ and $n_{\zeta_{u_{0}}} = 0$) for all but
finitely many 
points $u_{0} = u_{1}, \dots, u_{k}$  in $M_{0}$. 
In the sequel a data set $\mathcal{D}$
as in (\ref{data}) subject to this finite-support restriction
will be referred to as a {\em (right matrix) null-pole divisor}.
The  integer 
$$
\deg(\mathcal{D}) = \sum_{u_{0} \in M_{0}} (n_{\zeta_{u_{0}}} -
n_{\pi_{u_{0}}})
$$
is called the {\em degree} of the divisor $\mathcal{D}.$ In
what follows, it will often be assumed that the degree of 
$\mathcal{D}$ is zero. Thus in this case $N:= \sum_{ u_{0} \in 
M_{0}}n_{\zeta_{u_{0}}} =
\sum_{ u_{0} \in M_{0}}n_{\pi_{u_{0}}}.$ 

For later purposes, it is useful to have a partitioning of the index 
set $\{1, \dots, k \}$ into three types:
\begin{align}
   {\rm I} & = \{i \colon n_{\pi_{u_{i}}} > 0 \text{ while } 
   n_{\zeta_{u_{i}}} = 0 \}, \notag \\
{\rm II} &  = \{ i \colon \text{ both } n_{\pi_{u_{i}}} > 0 \text{
and } 
   n_{\zeta_{u_{i}}} > 0 \},\notag \\
   {\rm III} &   = \{ i \colon n_{\pi_{u_{i}}} = 0 \text{ while } 
   n_{\zeta_{u_{i}}} > 0 \}.
   \label{data''}
   \end{align}
Without loss of generality we may assume that 
\begin{align*}
 {\rm I} & = \{ 1, \dots, n_{\infty}\},  \\
{\rm II}  &  = \{ n_{\infty}+1, \dots, n_{\infty}+ n_{c}\},   \\
{\rm  III} & = \{ n_{\infty} + n_{c}+1, \dots, n_{\infty} + n_{c} +
n_{0}\}.
\end{align*}
Then $k = n_{\infty} + n_{c} + n_{0}$ where $n_{\infty}$ counts the
number 
of points where there is  a pole but no zero,  $n_{0}$ counts 
the number of points where there is a zero but no pole, $n_{Z} =
n_{0} 
+ n_{c}$ counts the number of points where there is a zero,  and
$n_{P} = n_{\infty} + n_{c}$ counts the number of points where there
is a 
pole and $n_{c}$ counts the number of points  in $M_{0}$ where there
there is both a zero and a pole.

One can lift the data to $\widehat{\mathcal{D}}$ on the universal
cover $\widehat{M}$ (as in Remark \ref{R:liftdata}) and ask whether 
there are automorphic 
meromorphic matrix functions or relatively automorphic meromorphic matrix
functions with respect to a flat factor of automorphy 
interpolating the data $\widehat{\mathcal{D}}$ on $\widehat{M}.$ Such
questions were addressed in \cite{BCV,BV} and 
will be discussed below.

Given a null-pole divisor \eqref{data}, we associate a linear
matrix-divisor space 
$\cL_{*}(\cD)$ of meromorphic $(1 \times r)$-row vector functions on
$M$ 
by
\begin{equation}  \label{RRspace}
    \cL_{*}(\cD) = \{ f \in \cM(M)^{1 \times r} \colon f(q) \in 
    \cS(\Upsilon_{q_{0}}, q_{0}) \text{ for all } q_{0} \in M \}
\end{equation}
or the more general sheaf version:  for $U$ equal to any open subset 
of $M$, define $\cL_{*}(\cD)|U$ by
$$
\cL_{*}(\cD)|_{U} = \{ f \in \cM(U)^{1 \times r} \colon f(q) \in 
\cS(\Upsilon_{q_{0}},q_{0}) \text{ for all } q_{0} \in U\}.
$$
The matrix-divisor space $\cL_{*}(\cD)$ is a matrix analogue of the
space $L(-D)$ 
for $D = \sum_{p \in M_{0}} n_{p} p$ a classical scalar divisor
(formal sum of points in $M_{0}$ with multiplicities $n_{q_{0}}$ such 
that  $n_{q_{0}} = 0$ for all but finitely many $q_{0}$) given by
$$
L(-D) = \{ f \in \cM(M): (f) \ge D \}
$$
where $(f)$ denotes the pole-zero divisor of $f$.
Analogous to what is done for the scalar-valued case (see 
\cite[Section 29.11]{Forster}) where one associates a holomorphic
line bundle with
a scalar divisor, one can associate a holomorphic 
vector bundle $E_{\cD}$ with the right matrix null-pole divisor $\cD$
in such a way that the 
space of holomorphic sections of the adjoint bundle $E_{\cD}^{*}$ is
isomorphic to the 
space of meromorphic functions $\cL_{*}(\cD)$ \eqref{RRspace} as
follows.

Given a data set $\cD$ as in \eqref{data}, by the results from 
\cite{BGR} we can find a open covering
${\mathfrak U} = \{ U_{\alpha}\}_{\alpha \in A}$ of $M$ 
and invertible $r \times r$-matrix-valued meromorphic 
functions $L_{\alpha}$ on $U_{\alpha}$ so that $L_{\alpha}$ solves 
the zero-pole interpolation problem for $\cD$ restricted to 
$U_{\alpha}$:
$$ \cO^{1 \times r}_{q_{0}}\cdot  L_{\alpha} = \cS(\Upsilon_{u_{0}},
u_{0}) 
\text{ for all } u_{0} \in U_{\alpha}.
$$ 
The collection of transition functions
 $$
   \Phi_{\alpha, \beta} = L_{\alpha} L_{\beta}^{-1} \in 
   \cO^{r \times r}_{U_{\alpha} \cap U_{\beta}}
$$
 defines the equivalence class of an $r$-dimensional holomorphic
vector bundle $E_{\cD}^*$ on $M$
 (with sections locally identified with row vector functions).
In view of the connection \eqref{local} between 
interpolants and null-pole subspaces, we see that:  {\em if $U$ is
any 
open subset of $M$, then 
the row-vector function $f$ is in $\cL_{*}(\cD)|_{U}$ if and only if}
$$
 f|_{U \cap U_{\alpha}} \in \cO^{1 \times r}_{U \cap U_{\alpha}} 
 \cdot L_{\alpha}|_{U \cap U_{\alpha}} 
 \text{ for all } \alpha \in A,
$$
i.e., if and only if {\em the function  $h_{\alpha}(u) : = f(u)
L_{\alpha}(u)^{-1}$ 
is holomorphic on $U \cap U_{\alpha}$ for all $\alpha$.} 
It then follows that $h_{\alpha} L_{\alpha} = h_{\beta} L_{\beta}$ 
($= f$) on $U \cap U_{\alpha} \cap U_{\beta}$, and hence
\begin{equation}  \label{correspondence}
  h_{\beta} = h_{\alpha} L_{\alpha} L_{\beta}^{-1} \text{ on } U \cap 
  U_{\alpha} \cap U_{\beta}.
\end{equation}
This observation has the implication that the functions 
$\{ h_{\alpha}^{\top}\}_{\alpha \in A}$ piece together to form a
holomorphic 
section for the bundle $E_\cD$  (with sections locally identified
with column vector functions) dual to $E_{\cD}^{*}$  and defined via
the collection of transition functions
$$
   \Phi_{\alpha, \beta}^{*} = (\Phi_{\alpha, \beta}^{\top})^{-1}=
(L_{\alpha}^{\top})^{-1} ((L_{\beta}^{\top})^{-1})^{-1} \in 
   \cO^{r \times r}_{U_{\alpha} \cap U_{\beta}}.
$$
From the correspondence $f|_{U_{\alpha}} \mapsto h_{\alpha}^{\top} = (
L_{\alpha}^{\top})^{-1}\left( f|_{U_{\alpha}}\right)^{\top}$ derived 
above, we see that 
then the matrix-divisor space $\cL_*(\cD)$ is in one-to-one 
correspondence with holomorphic sections of the bundle $E_{\cD}$
(with local sections given in the more 
conventional form of column-vector functions).

So far, for a given right matrix divisor $\cD$, we have obtained an 
equivalence between the matrix-divisor space $\cL_{*}(\cD)$ and the
vector 
bundle $E_{\cD}^{*}$.  We now explain how one can associate a factor
of 
automorphy $\zeta_{\cD}$ on the universal cover $\widehat M$ of $M$ 
with any divisor $\cD$.  We let $\widehat M$ be the universal cover 
of $M$ with projection map $\rho \colon \widehat M \to M$. Let
$\{ U_{\alpha} \}$ be the cover described above determining the 
transition functions $\Phi_{\alpha,\beta}= L_{\alpha}L_{\beta}^{-1}$ 
defining the bundle $E_{\mathcal{D}}.$ It can be assumed that this
cover 
is chosen so that for all $\alpha$, the set $\widehat{U}_{\alpha} =
\rho^{-1}(U_{\alpha})$ is the 
disjoint union $\cup\{T\widehat{V}_{\alpha} : T\in \mathcal{G}\},$
where $\widehat{V}_{\alpha}$ is 
a fixed component of $\widehat{U}_{\alpha}.$ For $s\in
\widehat{U}_{\alpha}$ define $\widehat{L}_{\alpha}(s)=L_{\alpha}(\rho
(s)).$  Then the collection of transition functions
$\widehat{\Phi}_{\alpha,\beta} =
\widehat{L}_{\alpha}\widehat{L}_{\beta}^{-1}$ defines a vector bundle
$\widehat{E}_{\mathcal{D}}$ on $\widehat{M}$.   By a theorem of
Grauert
\cite{Grauert}, the bundle  $\widehat{E}_{\mathcal{D}}$  is
holomorphically equivalent to the trivial bundle. Thus for each
$\alpha$ there exists an invertible
 holomorphic matrix functions $H_{\alpha}$ on $U_{\alpha}$ such that
\[H_{\alpha}\widehat{L}_{\alpha}
 \widehat{L}_{\beta}^{-1}H_{\beta}^{-1} = I.\] The meromorphic matrix
function defined on $\widehat{M}$ by
 $F_{\mathcal{D}}(u) =H_{\alpha}(u) \widehat{L}_{\alpha}(u)$ for
$u\in \widehat{U}_{\alpha}$ is a 
 trivialization of the left factor of automorphy
\begin{equation}\label{factor}
    \zeta_{\mathcal{D}}(T,u)
= F_{\mathcal{D}}(Tu)F_{\mathcal{D}}^{-1}(u)\end{equation} 
 associated with the divisor $\mathcal{D}$, i.e., one can verify: 
 {\em $F_{\cD}$ is a well-defined invertible $r \times 
 r$ matrix-valued meromorphic function on $\widehat M$ satisfying
 (1) $F_{\cD}(Tu) = \zeta_{\cD}(T, u) F_{\cD}(u)$
 for all $u \in \widehat M$, and (2) $F_{\mathcal{D}}$ 
 has right null-pole divisor  $\mathcal{D}$ at points in
$\rho^{-1}(q_{0})$ for all $q_{0} \in M_{0}$} (with compatible choice 
of local coordinates as in Remark \ref{R:liftdata}), i.e., we have obtained 
an (albeit not particularly constructive) solution of the Third 
Interpolation Problem mentioned in the Introduction.

Thus the matrix-divisor space $\cL_{*}(\cD)$ lifts to the space $\widehat
\cL_{*}(\cD)$ 
consisting of all meromorphic row-vector functions $\widehat f$ on 
$\widehat M$ such that (1) $\widehat f(q) \in 
\cS(\Upsilon_{q_{0}},q_{0})$ for all $q_{0} \in \widehat M$, and (2)
$\widehat f$ is relatively automorphic with factor of automorphy 
$\zeta_{\cD}$:
$$
  \widehat f(Tu) = \zeta_{\cD}(T,u) \widehat f(u)  \text{ for 
  all } T \in \cG.
$$
Now that we have identified a global solution $F_{\cD}$ of the 
interpolation problem for divisor $\cD$ (lifted to $\widehat M$), we 
may adjust the correspondence \eqref{correspondence} between
$\cL_{*}(\cD)$ and the space of holomorphic section of $E_{\cD}^{*}$ 
to the global 
form
\begin{equation} \label{globalcorrespondence}\widehat f \mapsto
\widehat  h: =(F_{\cD}^{\top})^{-1}\widehat f^{\top}\end{equation}
which puts the elements of $\widehat L_*(\cD)$ in one-to-one 
correspondence with global holomorphic column-vector functions
$\widehat h$ 
which are relatively automorphic with (left) factor of automorphy 
$\zeta_{\cD}^{*} : = (\zeta_{\cD}^{\top})^{-1}$:  $\widehat h(Tu) =
\zeta_\cD^*(T,u) \widehat h(u)$.

In short, the bundle $E_{\cD}^*$  corresponds to the factor of
automorphy $\zeta_{\cD}$ on $\widehat{M},$ given 
as in \eqref{factor} in terms of the trivialization $F_{\cD},$  with
the space of holomorphic sections 
$H^{0}(\zeta_{\cD}^{*})$ of the dual factor of automorphy
$\zeta_{\cD}^{*}$ being holomorphic vector functions 
as in \eqref{globalcorrespondence}.

\subsection{Trivializations of  flat factors of automorphy via
divisor 
constructions}  \label{S:4.2}
In the case where $\deg \mathcal{D} =0$, a method developed in
\cite{BCV} 
leads to a condition sufficient for the existence of a
trivializations of 
$\zeta_{\mathcal{D}}$ where the entries have only limited poles over 
points on $M$ in a nonspecial divisor of degree $g$. In order to
formulate this result, we first recall some
basic results about holomorphic vector bundles.

A nonnegative degree $g$ divisor $D=p_{1} + \cdots + p_{g}$ on the closed
Riemann surface of genus $g$ is called a {\em nonspecial divisor} in case
$i(D)=0$, where $i(D)$ is the dimension of the 
space of meromorphic 1-forms $\omega$ whose divisor $(\omega)$
satisfies $(\omega)\geq D$. As is customary, 
the collection of nonnegative divisors of degree $g$ can be
identified with the (topological) $g$-fold 
symmetric product $M^{(g)}$. The collection of nonspecial divisors
forms an open subset of  $M^{(g)}$ and 
given the nonspecial divisor $D$,  there is a nonspecial divisor
$D'=p_{1}+\cdots+p_{g}$ close to $D$ where 
the points $p'_{1},\ldots, p'_{g}$ are distinct \cite[page 91]{FarkasKra}.
It follows from the Riemann-Roch Theorem
\cite[page 73]{FarkasKra}  that the nonnegative degree $g$ divisor
is nonspecial if and only if the degree $g-1$ divisor 
$D_{0} = D - p_{0}$ satisfies $h^{0}(\lambda_{D_{0}})=0,$ where as is
customary, $h^{0}(\lambda_{D_{0}})$ is 
used to denote the dimension of the space of holomorphic sections of
the line bundle $\lambda_{D_{0}}$ 
associated with the divisor $D_{0}$ or, equivalently, the dimension
$l(D_{0})$ of the linear space $L(D_{0})$  
of meromorphic functions $f$ on $M$ whose divisors satisfy
$(f)+D_{0}\geq 0$;  here and in the sequel, $p_{0}$ is any point 
disjoint from $p_{1}, \dots, p_{g}$. In case $D$ is nonspecial, 
then by the Riemann-Roch Theorem the degree $g-1$ divisor
$D_{0} = D - p_{0}$  
satisfies \[ h^{0}(\lambda_{D_{0}}) =
h^{0}(\lambda_{D_{0}}^{-1}\kappa) = 0,\] where $\kappa$ is the
canonical line bundle. Applying the Jacobi inversion theorem (see 
e.g.~\cite[page 97]{FarkasKra}) to the divisor 
$(\lambda_{D_{0}}^{-1}\kappa)+p_{0}$,  one obtains a linearly
equivalent degree $g$
divisor $\widetilde D =  \widetilde{p}_{1}+\cdots+
 \widetilde{p}_{g}$.  
The degree $g-1$ divisor
 \[
 \widetilde{D}_{0} =\widetilde{p}_{1}+\cdots+ \widetilde{p}_{g} - p_{0}
 \] 
 is equivalent to the
divisor $(\lambda_{D_{0}}^{-1}\kappa)$ and  therefore also satisfies
$l(\widetilde{D}_{0}) = 0$. As noted above, it follows that 
$\widetilde D =  \widetilde{p}_{1}+\cdots+ \widetilde{p}_{g}$ is nonspecial.

It is a consequence of Weil's characterization of flat bundles
\cite{Weil} (see also \cite[page 110]{Gunvect}) 
that a rank $r$ bundle $E$ of degree zero
over $M$ will be flat in case, for some line bundle $\lambda $ of
degree $g-1$, one has $h^{0}(\lambda\otimes E) = 0$.
In case of genus one, this condition is also sufficient.
In particular, if the line bundle $\lambda$ has the form 
$\lambda_{D_{0}}$ where $D_{0} = p_{1} +  \cdots + p_{g} - p_{0}$ with $p_{1} + \cdots + 
p_{g}$ nonspecial and $E$ is a rank-$r$ vector bundle of degree zero 
over $M$ such that
\begin{equation}   \label{sect}
   h^{0}(\lambda_{D_{0}}\otimes E) = 0,
    \end{equation}
it follows that $E$ is flat.  In such a case we will say that the 
degree-0 vector bundle $E$ has the property NSF (nonspecial flat).
To summarize the preceding discussion in terms of the notion of
NSF bundles, we see that any NSF bundle is flat and 
in the genus 1 case the classes of flat and NSF bundles coincide. The 
goal of this section (see Theorem \ref{T:NSF} below) is 
to show that, given a flat factor of automorphy $\zeta$ associated 
with an NSF bundle $E_{\zeta}$, there is a
meromorphic matrix function $F$ automorphic with respect 
to $\zeta$ having matrix divisor supported on only $g+1$ points.
Before arriving at this result, we need to go through a 
number of preliminaries.

The property NSF  is symmetric with respect to bundle adjoints on
the collection of degree zero bundles. Indeed, 
if $E$ has property NSF and 
\eqref{sect} holds, then it follows from the Riemann-Roch Theorem for
vector bundles (see  \cite[page 64]{Gunvect})  
that   
 \[
  h^{0}((\lambda_{D_{0}})^{-1}\kappa\otimes  E^{*}) = 0.
  \] 
  As mentioned earlier, the line bundle $\lambda_{D_{0}}^{-1}\kappa$
is equivalent to a degree $g-1$ line bundle $\tilde{\lambda}$ 
  associated with a divisor of the form 
  \[
  \widetilde{D}_{0} =  \widetilde{p_{1}}+\cdots +
  \widetilde{p_{g}} -p_{0},
\] 
where the nonnegative degree $g$ 
  divisor $ \widetilde{p_{1}}+\cdots + 
\widetilde{p_{g}}$ is nonspecial. Thus
$E^{*}$ has property NSF. 

\begin{remark} {\rm As noted in \cite{BCV}, there are examples in
higher genus of degree $0$ (even semi-stable) rank $2$ 
    bundles where $h^{0}(\lambda\otimes E) \neq 0$ for all degree
$g-1$ line bundles $\lambda$.}
\end{remark}

Following \cite{BCV}, we introduce automorphic meromorphic 
matrix functions on
$\widehat{M}$ that have a prescribed (left) pole 
pair $(A_{\pi}, C_{\pi})$ at the point $\mathcal{G}w$ where $w$ is a
fixed point in the fundamental domain $M_{0}$. To 
this end let $D_{0}$ 
be a divisor of degree $g-1$ on $M$ of the form
$D_{0} = p_{1}+\cdots + p_{g} -p_{0}$, where 
$p_{1}+\cdots + p_{g}$ is nonspecial and such that the points
$p_{0},p_{1},\ldots,p_{g}$ are distinct from $w$.  We also fix a 
local coordinate $z$ for $M_{0}$ centered at $w$ (so $z(w) = 0$).  
Since $h^{0}(\lambda_{D_{0}}) = 0,$ it follows that for any integer
$k\geq 1$ there is a unique meromorphic 
function $f^{D_{0}}_{kw}$ on $M$, equivalently, an automorphic
meromorphic function on $\widehat{M}$, whose divisor satisfies 
$(f^{D_{0}}_{kw})+D_{0}+kw\geq 0$ and which is normalized so that the
principal part of the Laurent series at $w$ with respect to the local 
coordinate $z$ at $w$ has 
the form $z(u)^{-k}$ (see \cite[page 147]{BCV}).
Suppose that $A$ is the $n\times n$ Jordan cell 
$$
 A = \begin{bmatrix} 0 & 1 & & \\ & \ddots & \ddots & \\ & & 0 & 1 \\ & 
 & & 0 \end{bmatrix}.
 $$
Introduce the matrix function 
\begin{equation}  \label{fD0wA} 
f^{D_{0}}_{w,A}=\left[ 
\begin{array}{lllll}
f^{D_{0}}_{w}  & f^{D_{0}}_{2w} & f^{D_{0}}_{3w} & \cdots  &
f^{D_{0}}_{nw} \\ 
 & f^{D_{0}}_{w}  & f^{D_{0}}_{2w}& \ddots  & \vdots  \\ 
 &  & \ddots  & \ddots  & f^{D_{0}}_{3w} \\ 
 &  &  &  \ddots  & f^{D_{0}}_{2w}\\ 
 &  &  &  & f^{D_{0}}_{w} 
\end{array}
\right] 
\end{equation}
where as usual unspecified entries are equal to $0$.
This definition is extended to an arbitrary nilpotent matrix $A$ by setting
$f^{D_{0}}_{w,SAS^{-1}}=Sf^{D_{0}}_{w,A}S^{-1}$ and 
$f^{D_{0}}_{w,A_{1}\oplus A_{2} } = f^{D_{0}}_{w,A_{1}}\oplus
f^{D_{0}}_{w,A_{2}},$ where $S$ is an invertible matrix. 
It follows that for any nilpotent matrix $N$ the difference
\[
f^{D_{0}}_{w,N}(u)-((z(u)I-N)^{-1}
\] 
is analytic at $u=w$. 
This last statement follows from the fact that, when $N$ is $r\times
r$ Jordan cell with eigenvalue $0$,
the local (left) pole pair of $f^{D_{0}}_{w,N}$ 
at $w$ has the form $(A_{\pi}, C_{\pi})=(N,I_{r})$. The only other
poles of entries of the meromorphic matrix function 
$f^{D_{0}}_{w,N}$ are at the points $p_{1}, \dots, p_{s}$ of the
divisor $D_{0}$. In fact, if the divisor $D_{0}$ is 
written in the form 
\[
D_{0}=n_{1}p_{1}+\cdots+n_{s}p_{s} - p_{0},
\]
where the distinct points $p_{1},\ldots,p_{s}$ 
appear with the positive multiplicities $n_{1},\ldots,n_{s}$,
respectively, then at  $p_{1},\ldots,p_{s}$ the entries 
of $f^{D_{0}}_{w,N}$  have poles of order at most
$n_{1},\ldots,n_{s}$, respectively.

Let the degree $g-1$ divisor $D_{0}=p_{1}+\cdots+p_{g}-p_{0}$ be as
above with points $p_{0},p_{1},\ldots,p_{g}$ 
distinct from points $u_{1}, \dots, u_{k}$ on which $\cD$ is supported.
For $\cD$ a right matrix null-pole divisor supported at the points 
$u_{1}, \dots, u_{k}$ as in \eqref{data} with the set of indices 
$\{1, \dots, k\}$ partitioned into subsets I, II, III as in 
\eqref{data''}, we introduce the space 
$\mathcal{M}_{\mathcal{D}}^{D_{0}}$ of $r$-dimensional row-vector
meromorphic functions on $M$ given by
\begin{equation} \label{row} 
    \mathbf{k} =\mathbf{u}_{0} +\sum_{ i \in {\rm I} \cup 
    {\rm II}}
    \mathbf{u}_{i}f^{D_{0}}_{u_{i},A_{\pi_{u_{i}}}} C_{\pi_{u_{i}}},
\end{equation} 
where 
    $\mathbf{u}_{0} \in {\mathbb C}^{1 \times r}$ and
$\mathbf{u}_{i} \in {\mathbb C}^{1 \times n_{\pi_{u_{i}}}}$, 
$i=1,\ldots, n_{P}$.  The linear map $\mathbf{T}$ 
    from  ${\mathbb C}^{1 \times (r+n_{u_{1}} + \cdots +
n_{u_{n_{P}}})}$
to  $\cM_{\cD}^{\cD_{0}}$
that associates the 
 vector $\begin{bmatrix} \mathbf{u}_{0} & \mathbf{u}_{1} &  \cdots 
& \mathbf{u}_{n_{P}} \end{bmatrix}$ with the vector $\mathbf{k}$
given by (\ref{row}) is one-to-one.
 Indeed, if 
 \[
 \mathbf{k} = \mathbf{T}( \begin{bmatrix} \mathbf{u}_{0} &
\mathbf{u}_{1} & \cdots &
   \mathbf{u}_{n_{P}} \end{bmatrix}) = \mathbf{0}, 
   \] 
then the singular part of $\mathbf{k}$ at $u_{i}$ or, 
    equivalently, the singular part of
$$
\mathbf{u}_{i}( z_{i}(u)I-A_{\pi{i}})^{-1}C_{\pi_{u_{i}}}
$$
(where $z_{i}$ is the local coordinate at $u_{i}$) at $u_{i}$, is zero. 
    This leads one to conclude that
$\mathbf{u}_{i}A_{\pi_{u_{i}}}^{j}C_{\pi_{u_{i}}} = \mathbf{0}$ for
$j=0,1,\ldots, n_{\pi_{u_{i}}} - 1$.  The controllability of the pair 
$(A_{\pi_{u_{i}}}, C_{\pi{u_{i}}})$ for each $i=1,\ldots,n_{P}$  
implies  that $\mathbf{u}_{i} =  \mathbf{0}$.  As $\mathbf{u}_{0}$ is
the value of $\mathbf{k}$ at $p_{0}$  
(since each $f_{u_{i},A}^{D_{0}}$ vanishes at $p_{0}$), we get
$\mathbf{u}_{0}=0$ as well.  We conclude that 
$\mathbf{T}$ is one-to-one.

When one assumes that the meromorphic matrix function $\bk$ has the 
form \eqref{row}, there is a convenient test for identifying when 
$\bk \in \cO_{u_{i}}^{1 \times r} \cdot F_{\cD}$ for each of the points 
$u_{i}$ ($i = 1, \dots,k= n_{\infty} + n_{c} + n_{0}$) in the support 
of the matrix divisor $\cD$. Before presenting this result we need 
some additional notation.

For $i \in {\rm I} \cup {\rm II}$ and $j \in {\rm II} \cup {\rm 
 III}$  and $i \ne j$ in case both $i$ and $j$ are in ${\rm II}$, let 
 \begin{equation}   \label{gamma-a}
\Gamma^{D_{0}}_{\cD,ij} =  -\text{res}_{u=u_{j}}[ 
 f^{D_{0}}_{u_{i},A_{\pi_{u_{i}}}}(u)
 C_{\pi_{u_{i}}}B_{\zeta_{u_{j}}}((z_{j}(u)I-A_{\zeta_{u_{j}}})^{-1}] 
 \end{equation}
and for $i$ and $j$ both in ${\rm II}$ with $i = j$, let 
 \begin{align}
&     \Gamma^{D_{0}}_{\cD,ij} = \Gamma^{D_{0}}_{\cD,jj}   
 =:    S_{u_{j}} \notag \\
&   -  \text{res}_{u_{j}} 
     \left[\left(f^{D_{0}}_{u_{j},A_{\pi_{u_{j}}}}(u) 
     - (z_{j}(u)I - A_{\pi_{u_{j}}})^{-1} \right) C_{\pi_{u_{j}}} 
  B_{\zeta_{u_{j}}}(z_{j}(u)I-A_{\zeta_{u_{j}}})^{-1} \right].
  \label{gamma-b}
 \end{align}
These matrices form the block entries for an $n_{P}\times n_{Z}$
block matrix 
$\Gamma^{D_{0}}_{\cD}$ having rows indexed by $i \in {\rm I} \cup
{\rm II}$ 
and columns indexed by $j \in {\rm II} \cup {\rm III}$:
\begin{equation}   \label{gamma}
   \Gamma^{D_{0}}_{\cD} = [ \Gamma^{D_{0}}_{\cD, ij}]_{i \in {\rm I}
\cup {\rm 
   II}, j \in {\rm II} \cup {\rm III}}.
\end{equation}
We are now able to describe and prove the following result.

\begin{prop}   \label{P:test}  Assume that the meromorphic row-vector 
    function $\bk$ has the form \eqref{row}.  Then $\bk \in 
    \cO_{u_{i}}^{1 \times r} F_{\cD}$ for each point $u_{i}$ in the 
    support of $\cD$ if and only if 
\begin{equation}   \label{test}
    \bu_{0} B_{\zeta} = {\rm row}_{i \in {\rm I} \cup {\rm II}}
[\bu_{i}]
    \cdot \Gamma^{D_{0}}_{\cD}
\end{equation}    
where $B_{\zeta} =     {\rm row}_{i \in {\rm II} \cup {\rm III}} 
[B_{\zeta_{i}}]$.
\end{prop}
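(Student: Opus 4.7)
My plan is to decode the local membership requirement $\bk \in \cO_{u_i}^{1\times r}\cdot F_{\cD}$ at each point $u_i$ via the explicit description of the null-pole subspace from \eqref{local}--\eqref{nullpolesubspace}, and then show that the resulting collection of local conditions collapses uniformly into the single block identity \eqref{test}.

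\emph{Step 1 (the singular part is automatic).} By construction of $f^{D_0}_{u_i, A_{\pi_{u_i}}}$, the difference $f^{D_0}_{u_i, A_{\pi_{u_i}}}(u) - (z_i(u)I - A_{\pi_{u_i}})^{-1}$ is analytic at $u_i$, while every other summand in \eqref{row} is analytic at $u_i$ by the disjointness of the support of $\cD$ from the other $u_j$ and from $p_0,p_1,\ldots,p_g$. Hence the singular part of $\bk$ at $u_i$ equals $\bu_i(z_i(u)I - A_{\pi_{u_i}})^{-1}C_{\pi_{u_i}}$ when $i \in \mathrm{I}\cup\mathrm{II}$ and vanishes when $i \in \mathrm{III}$. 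In the representation of $\bk$ demanded by \eqref{nullpolesubspace}, this forces the free parameter $\mathbf{x}$ to equal $\bu_i$ (or to be empty). The singular-part condition is thereby automatic, and for $i \in \mathrm{I}$ the zero pair is empty so the residue test is vacuous; the only substantive content arises at $j \in \mathrm{II}\cup\mathrm{III}$.

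\emph{Step 2 (residue calculation).} Fix $j \in \mathrm{II}\cup\mathrm{III}$ and compute $\text{res}_{u_j}\bigl[\bk(u)B_{\zeta_{u_j}}(z_j(u)I - A_{\zeta_{u_j}})^{-1}\bigr]$ term by term. The constant $\bu_0$ contributes $\bu_0 B_{\zeta_{u_j}}$ (since $(zI - A_{\zeta_{u_j}})^{-1}$ has matrix residue $I$ at $u_j$). For $i \in \mathrm{I}\cup\mathrm{II}$ with $i \ne j$, $f^{D_0}_{u_i, A_{\pi_{u_i}}}$ is analytic at $u_j$ and the contribution is $-\bu_i \Gamma^{D_0}_{\cD, ij}$ directly from \eqref{gamma-a}. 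For the self-term $i = j \in \mathrm{II}$, split $f^{D_0}_{u_j, A_{\pi_{u_j}}} = (z_j(u)I - A_{\pi_{u_j}})^{-1} + \widetilde f_{u_j}$ with $\widetilde f_{u_j}$ analytic at $u_j$, and invoke the Sylvester-equation identity
\[
(zI - A_{\pi_{u_j}})^{-1} C_{\pi_{u_j}} B_{\zeta_{u_j}}(zI - A_{\zeta_{u_j}})^{-1} = (zI - A_{\pi_{u_j}})^{-1} S_{u_j} - S_{u_j}(zI - A_{\zeta_{u_j}})^{-1},
\]
obtained by substituting $C_{\pi_{u_j}} B_{\zeta_{u_j}} = A_{\pi_{u_j}} S_{u_j} - S_{u_j} A_{\zeta_{u_j}}$ from \eqref{Syl}. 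Its residue at $u_j$ equals $S_{u_j} - S_{u_j} = 0$, so the singular piece of $f^{D_0}_{u_j, A_{\pi_{u_j}}}$ contributes nothing, while the analytic correction $\widetilde f_{u_j}$ contributes $\bu_j(S_{u_j} - \Gamma^{D_0}_{\cD, jj})$ directly from the definition \eqref{gamma-b}.

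\emph{Step 3 (assembly).} Summing the contributions gives
\[
\text{res}_{u_j}\bigl[\bk(u)B_{\zeta_{u_j}}(z_j(u)I - A_{\zeta_{u_j}})^{-1}\bigr] = \bu_0 B_{\zeta_{u_j}} - \sum_{i \in \mathrm{I}\cup\mathrm{II}} \bu_i \Gamma^{D_0}_{\cD, ij} + \bu_j S_{u_j}
\]
when $j \in \mathrm{II}$, with the last summand omitted when $j \in \mathrm{III}$. The local membership condition at $u_j$ from \eqref{nullpolesubspace}, applied to the regular part $\mathbf{h}_j = \bk - \bu_j(z_j(u)I - A_{\pi_{u_j}})^{-1}C_{\pi_{u_j}}$ (where the subtracted residue vanishes by the same Sylvester identity), requires this residue to equal $\bu_j S_{u_j}$ (resp.\ $0$). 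The $\bu_j S_{u_j}$ terms cancel in both cases, leaving the uniform equation $\bu_0 B_{\zeta_{u_j}} = \sum_{i \in \mathrm{I}\cup\mathrm{II}} \bu_i \Gamma^{D_0}_{\cD, ij}$. Stacking over all $j \in \mathrm{II}\cup\mathrm{III}$ is precisely \eqref{test}. The main obstacle is the self-term bookkeeping in Step 2: one must recognize that the asymmetric-looking definition \eqref{gamma-b} of $\Gamma^{D_0}_{\cD, jj}$ has been prearranged so that, once the Sylvester cancellation kills the residue of the singular$\times$singular product, the surplus $\bu_j S_{u_j}$ arising from the analytic piece exactly matches the right-hand side of the membership condition at the diagonal point, producing the clean global identity \eqref{test}.
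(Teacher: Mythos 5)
Your proposal is correct and follows essentially the same route as the paper: decompose $\bk$ at each diagonal point $u_j$ into its singular part plus a regular remainder, apply the null-pole subspace characterization \eqref{nullpolesubspace}, and recognize that the residue computation collapses to \eqref{interp} (hence \eqref{test}) once the definitions \eqref{gamma-a}--\eqref{gamma-b} are unwound. The only cosmetic difference is that you compute the residue of $\bk$ itself and then invoke the Sylvester cancellation $(zI-A_\pi)^{-1}C_\pi B_\zeta(zI-A_\zeta)^{-1}=(zI-A_\pi)^{-1}S-S(zI-A_\zeta)^{-1}$ to drop the singular$\times$singular contribution, whereas the paper works directly with the residue of the regular part; the two bookkeeping choices are equivalent.
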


\begin{proof}
    Suppose $\mathbf{k}$ is given in the form (\ref{row}) 
and one wishes to investigate if this meromorphic row-vector function
belongs to $\mathcal{O}^{1\times r}_{u}\cdot F_{\mathcal{D}}$ at
$u=u_{j}$ for $j \in {\rm I} \cup {\rm II} \cup {\rm III}$. 

If $j \in {\rm I}$, no extra 
condition is required:  ${\mathbf k}$ is already of the correct form 
to be in the $\cO^{1 \times r}_{u_{j}} \cdot F_{\cD}$. 

For $j \in {\rm III}$, then using the
description (\ref{local}) one sees that the germ  
 of $\mathbf{k}$ belongs to $\mathcal{O}^{1\times r}(\{u_{j}\}) \cdot
F_{\mathcal{D}}$ if and only if 
 \[
 \text{res}_{u=u_{j}}\left\{ \left[\bu_0+\sum_{i=1}^{n_{P}}\mathbf{u}_{i}  
f_{w_{i},A_{\pi_i}}^{D_{0}}(u)C_{\pi_{i}} \right]
B_{\zeta_{j}}(z_{j}(u)I-A_{\zeta_{j}})^{-1} \right\}
=0,
\] 
or, equivalently,  
 \begin{equation} \label{interp} 
    \bu_{0} B_{\zeta_{j}}  =\sum_{i=1}^{n_{P}}\mathbf{u}_{i}  
    \Gamma_{\cD, ij}^{D_{0}}.
\end{equation} 

For $j \in {\rm II}$, we rewrite ${\mathbf k}$ near $u_{j}$ as
\begin{align*}
 {\mathbf k} = & [ {\mathbf u}_{j} (z_{j}(u)I - A_{\pi_{u_{j}}})^{-1} 
 C_{\pi_{j}}] \\
 & + \left[ \left( \bu_{0} + \sum_{i \in {\rm I} \cup {\rm II}}
{\mathbf u}_{i} 
 f^{D_{0}}_{u_{i}, A_{\pi_{u_{i}}}} C_{\pi_{u_{i}}}\right)
 -  {\mathbf u}_{j} (z_{j}(u) I - A_{\pi_{u_{j}}})^{-1} C_{\pi_{j}} 
 \right].
\end{align*}
Note that the second bracketed term in this decomposition is 
holomorphic at $u_{j}$ since the singular parts at $u_{j}$ cancel out.
We then apply the characterization \eqref{local} to this 
decomposition to see that ${\mathbf k} \in \cO_{u_{j}}^{1 \times k} 
F_{\cD}$ if and only if
\begin{align*}
\text{res}_{u=u_{j}} & \left\{ \left[ \left(  \bu_{0} + \sum_{i \in
{\rm I} \cup {\rm II}} 
{\mathbf u}_{i} f^{D_{0}}_{u_{i}, A_{\pi_{u_{i}}}}
C_{\pi_{u_{i}}}\right)
 -  {\mathbf u}_{j} (z_{j}(u) I - A_{\pi_{u_{j}}})^{-1} C_{\pi_{j}} 
 \right] \right. \cdot   \\
& \cdot  B_{\zeta_{u_{j}}} (z_{j}(u)I - A_{\zeta_{u_{j}}})^{-1} 
 \big\} = {\mathbf u}_{j} S_{u_{j}}.
\end{align*}
This condition collapses to \eqref{interp} for the case where $j \in 
{\rm II}$. 

When we arrange the conditions \eqref{interp} as an equality of two 
block row matrices (with block-row entries indexed by $j \in {\rm II} 
\cup {\rm III}$), we arrive at the single matrix equation 
\eqref{test}.  As the analysis is necessary and sufficient, the 
result of Proposition \ref{P:test} follows.
 \end{proof}   

The subspace  $(\mathcal{M}_{\mathcal{D}}^{D_{0}})_{0}$ of
$\mathcal{M}_{\mathcal{D}}^{D_{0}}$ consisting of the 
meromorphic functions $\mathbf{k}$ on $M$ of the form (\ref{row})
with $\mathbf{u}_{0} =
\mathbf{0}$ was used in \cite{BCV} to 
describe the holomorphic sections of $(\lambda_{-D_{0}}\otimes
E_{\mathcal{D}})^{*}\cong\lambda_{D_{0}}\otimes
E_{\mathcal{D}}^{*}$. In fact, what is described directly is the 
matrix-divisor subspace  $\cL_{*}((-D_{0}) 
\otimes \cD)$ for the right matrix divisor $(-D_{0}) \otimes \cD$.  
By the discussion in Section \ref{S:4.1}, we 
see that the space $\cL_{*}((-D_{0}) \otimes \cD)$ is isomorphic to
the 
space of holomorphic sections for the bundle $(\lambda_{-D_{0}}
\otimes 
E_{\cD})^{*}.$ That is, after taking transposes, the space of
sections of the bundle $\lambda_{D_{0}} \otimes E_{\cD}^{*}$  is seen
to be equivalent to the space $\cL_{*}((-D_{0})\otimes \cD)$ of
multivalued functions 
${\mathbf k}$ on $M$ satisfying ${\mathbf k}  \in \cO_{u}^{1 
\times r} \cdot F_{\cD} f_{D_{0}}^{-1}$ where $f_{D_{0}}$ is a 
trivialization of the line bundle $\lambda_{D_{0}}$ (with classical 
line bundle conventions).  As noted in \cite[page 149]{BCV}, 
$\cL_{*}((-D_{0}) \otimes \cD)$ can be identified concretely as a
subspace of
$(\mathcal{M}_{\mathcal{D}}^{D_{0}})_{0};$ the result is as follows.

\begin{prop}\label{P:nonspecialint}
     Let $D_{0}$ be a degree $g-1$ divisor of the form
$n_{1}p_{1}+\cdots + n_{s}p_{s} - p_{0},$ where $D=n_{1}p_{1}+ \cdots 
+ n_{s}p_{s}$      
is a nonspecial divisor and let the degree zero null-pole
divisor $\mathcal{D}$ be of the form (\ref{data}) with partitioning 
of indices as in \eqref{data''}.  Define the block matrix 
$\Gamma^{D_{0}}_{\cD}$ as in \eqref{gamma}. Then the matrix-divisor space 
$\cL_{*}((-D_{0}) \otimes \cD)$ of the divisor $(-D_{0}) \otimes
\cD$, 
an isomorphic copy of the space of holomorphic sections of the bundle 
$\lambda_{D_{0}}\otimes E_{\cD}^{*}$, is equal to the
subspace of row vector meromorphic functions  $\mathbf{k}$ in  
$(\mathcal{M}_{\mathcal{D}}^{D_{0}})_{0}$, i.e., $\bk$ has the form 
\eqref{row} with $\bu_{0} = 0$
\begin{equation}   \label{row0}
    \mathbf{k} =  \sum_{ i \in {\rm I} \cup {\rm II}}
    \mathbf{u}_{i}f^{D_{0}}_{u_{i},A_{\pi_{u_{i}}}} C_{\pi_{u_{i}}},
\end{equation} 
 where the row vector $\mathbf{u} = {\rm row}_{i \in {\rm I} \cup 
 {\rm II}}[ \bu_{i}]$ satisfies 
\begin{equation}   \label{test'}
    \mathbf{u}\Gamma^{D_{0}}_{\cD}=\mathbf{0}.
    \end{equation}
In particular,
$h^{0}(\lambda_{D_{0}}\otimes E_{\mathcal{D}}^{*})$ equals 
     the dimension of the left-kernel of $\Gamma^{D_{0}}_{\cD}$
acting on 
$\mathbb{C}^{1 \times (n_{\pi_{u_{1}}} + \cdots +  
n_{\pi_{u_{n_{P}}}})}$.
\end{prop}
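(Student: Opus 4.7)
The plan is to combine the local interpolation test of Proposition \ref{P:test} with the nonspecialness hypothesis on $D$, which gives vanishing of $L(D_0)$. Recall that $\cL_*((-D_0)\otimes\cD)$ consists of meromorphic row-vector functions $\bk$ on $M$ whose germ at each point $u_i\in\operatorname{supp}\cD$ lies in $\cO^{1\times r}_{u_i}\cdot F_{\cD}$, whose only other poles are at $p_1,\dots,p_s$ with orders at most $n_1,\dots,n_s$, and which vanish at $p_0$ (so that the twist by $-D_0$ is absorbed). The goal is to represent such a $\bk$ uniquely in the form \eqref{row0} and then recognize the local-interpolation condition at points of $\cD$ as the matrix equation \eqref{test'}.

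First I would show that every $\bk\in\cL_*((-D_0)\otimes\cD)$ has the form \eqref{row0}. For each $i\in\mathrm{I}\cup\mathrm{II}$, the pole pair $(A_{\pi_{u_i}},C_{\pi_{u_i}})$ is controllable, so the principal part of $\bk$ at $u_i$ can be written uniquely as $\bu_i(z_i(u)I-A_{\pi_{u_i}})^{-1}C_{\pi_{u_i}}$ for a row vector $\bu_i\in\mathbb{C}^{1\times n_{\pi_{u_i}}}$. Since $(A_{\pi_{u_i}},I)$ is a local left pole pair of $f^{D_0}_{u_i,A_{\pi_{u_i}}}$ at $u_i$, the row vector $\bu_i f^{D_0}_{u_i,A_{\pi_{u_i}}}C_{\pi_{u_i}}$ has the same principal part at $u_i$ and is holomorphic at every $u_j$ with $j\neq i$. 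Hence the difference
\[
\bk-\sum_{i\in\mathrm{I}\cup\mathrm{II}}\bu_i\, f^{D_0}_{u_i,A_{\pi_{u_i}}}C_{\pi_{u_i}}
\]
is holomorphic at each point of $\operatorname{supp}\cD$, has poles at most of order $n_j$ at $p_j$ (from the construction of $f^{D_0}$), and vanishes at $p_0$ (every $f^{D_0}_{u_i,A_{\pi_{u_i}}}$ vanishes at $p_0$, as does $\bk$). Thus each entry of the difference lies in the classical Riemann--Roch space $L(D_0)$. The nonspecial hypothesis on $D$ is precisely the statement that $l(D_0)=0$ (noted in Section \ref{S:4.2}), so the difference is identically zero and $\bk$ has the form \eqref{row0}.

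Next I would apply Proposition \ref{P:test}. For a row vector of the form \eqref{row0} (so $\bu_0=0$), membership in $\cO^{1\times r}_{u_i}\cdot F_{\cD}$ at every $u_i\in\operatorname{supp}\cD$ reduces, by \eqref{test}, to $\mathbf{0}\cdot B_\zeta=\bu\,\Gamma^{D_0}_{\cD}$, i.e.\ exactly \eqref{test'}. Conversely, any $\bk$ given by \eqref{row0} automatically has divisor $\ge -D_0$ away from $\operatorname{supp}\cD$ and vanishes at $p_0$, so once \eqref{test'} holds it belongs to $\cL_*((-D_0)\otimes\cD)$.

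Finally, the dimension count follows from the injectivity of the map $\mathbf{T}$ established in the text: the elements $\bk$ of $(\mathcal{M}^{D_0}_{\cD})_0$ are in bijection with the row vectors $\bu=\operatorname{row}_{i\in\mathrm{I}\cup\mathrm{II}}[\bu_i]$, and the condition \eqref{test'} carves out exactly the left kernel of $\Gamma^{D_0}_{\cD}$. Combined with the isomorphism $\cL_*((-D_0)\otimes\cD)\cong H^0(\lambda_{D_0}\otimes E_{\cD}^*)$ from Section \ref{S:4.1}, this yields the stated formula for $h^0(\lambda_{D_0}\otimes E_{\cD}^*)$. The only genuinely non-mechanical step is the use of nonspecialness to kill the residual in $L(D_0)^{1\times r}$; once that is in hand, the rest is an assembly of Proposition \ref{P:test} and the bundle-divisor dictionary already in place.
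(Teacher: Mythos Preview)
Your proof is correct and follows essentially the same approach as the paper's: use nonspecialness of $D$ (i.e., $l(D_0)=0$) to force any element of $\cL_*((-D_0)\otimes\cD)$ into the form \eqref{row0} by subtracting off principal parts, and then invoke Proposition~\ref{P:test} with $\bu_0=0$ to identify the remaining interpolation constraints as the left-kernel condition \eqref{test'}. The only cosmetic difference is that you run the two inclusions in the opposite order and are slightly more explicit about controllability and the vanishing at $p_0$.
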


\begin{proof}
    By assumption the points $p_{0}, p_{1}, \dots, p_{g}$ are assumed 
    to be disjoint from the points $u_{i}$ ($i \in {\rm I} \cup {\rm 
    II} \cup {\rm III}$) of the support of $\cD$.  Thus the criterion 
    \eqref{test} from Proposition \ref{P:test} (applied with $\bu_{0} 
    = 0$) informs us that any $\bk$ of the form \eqref{row0} is in
    $\cO^{1 \times r}_{u_{i}}\cdot F_{\cD} f_{D_{0}}^{-1}$ for each point 
    $u_{i}$ in the support of $\cD$.
Moreover, since each function $f^{D_{0}}_{u_{i}, A_{\pi_{u_{i}}}}$
($i \in {\rm 
I} \cup {\rm II}$) has a simple pole at each $p_{i}$ and a simple 
zero at $p_{0}$, we see that any ${\mathbf k}$ of the form 
\eqref{row0} is in $\cO^{1 \times r}_{p_{j}} \cdot F_{\cD} f_{D_{0}^{-1}}$ 
for $j=0,1, \dots, g$ as well. 

Conversely we argue that any function ${\mathbf k}'$ in $\cL_{*}(
(-D_{0}) 
\otimes \cD)$  necessarily has the form \eqref{row0} as follows.  We 
choose vectors $\bu_{1}, \dots, \bu_{n_{P}}$ so that the function 
${\mathbf k}$ given by \eqref{row0} has the property that its 
singularities at the points $u_{1}, \dots, u_{n_{P}}$ exactly cancel 
the singularities of ${\mathbf k}'$ at these points, i.e., so that
$$
  {\mathbf k} - {\mathbf k}' \text{ is analytic on } M \setminus \{ 
  p_{0}, p_{1}, \dots, p_{g}\}.
$$
From the assumptions on ${\mathbf k}$ and ${\mathbf k}'$, 
it follows that each matrix entry of ${\mathbf k} - {\mathbf k}'$ is 
in the classical divisor space $\cL(D_{0})$.  As $D$ is nonspecial, 
it follows that ${\mathbf k} - {\mathbf k}' = 0$, i.e., ${\mathbf k} 
= {\mathbf k}'$ is in the space $(\cM^{D_{)}}_{\cD})_{0}$.
\end{proof}

The content of Proposition \ref{P:nonspecialint} is essentially the 
same as  Theorem 6 from \cite{BCV}; we note that our analysis here
also corrects some misprints in the formula 
for $\Gamma^{\lambda}$ given in \cite[page 149]{BCV}.

If the divisor $\cD$ given as in (\ref{data}) has degree zero, then
it follows from Proposition \ref{P:nonspecialint}
that a necessary and sufficient
condition for the bundle $E_{\mathcal{D}}$  
(or, equivalently, $E_{\cD}^{*}$) to have property NSF is that
there be a choice of nonspecial divisor
$D=p_1+\cdots + p_g$ so that, with $D_0 = p_1+\cdots + p_g -p_0,$ the 
associated matrix $\Gamma^{D_{0}}_{\cD}$ is invertible.

The matrix $\Gamma^{D_{0}}_{\cD}$  can be used to give criteria for
the
existence of a {\em nondegenerate} (i.e., with determinant not
vanishing 
identically) $r\times r$-meromorphic matrix function 
with controlled pole behavior. To see this let $\mathcal{D}$ be given
as in (\ref{data}) and $D_{0} =p_{1}+\cdots +p_{g}-p_{0}$ 
be a divisor of degree $g-1$ with $D=p_{1}+\cdots +p_{g}$ a
nonspecial divisor. As usual we assume that the points
$p_{0},p_{1}\ldots, p_{g}$ of $D_{0}$ are distinct from the points
$u_{1},\ldots, u_k$ ($k=n_{\infty} + n_{c} + n_{0}$) of the divisor
$\mathcal{D}.$ 
Introduce the $r\times r$ meromorphic matrix function
\begin{equation} \label{matrix} 
 K = U_{0} + \sum_{ i \in {\rm I} \cup {\rm II}} U_{u_{i}}
f^{D_{0}}_{u_{i},A_{\pi_{u_{i}}}}C_{\pi_{u_{i}}},
\end{equation} 
where the matrix $U_{i} \in {\mathbb C}^{r\times n_{\pi_{{u_{i}}}}}$ 
for $i=1,\ldots, n_{P}$ and $U_{0}$ is $r\times r$.

\begin{prop} \label{P:Kexists} 
    Let $\mathcal{D}$ be a rank $r$ degree zero right matrix null-pole
divisor on $M$ of the form (\ref{data}) and   let $D_{0}$ be a 
divisor of degree $g-1$ of the form
    $D_{0}= n_{1} p_{1}+\cdots +  n_{s} p_{s}-p_{0}$ 
    ($p_{0}, p_{1}, \dots, p_{s}$ taken to be distinct with  
    multiplicities $n_{j} \ge 0$, $j=1, \dots, s$)  satisfying $n_{1} 
    + \cdots + n_{s} = g$) such that 
$n_{1}p_{1}+\cdots + n_{s}p_{s}$ is a nonspecial divisor 
    and  the points $p_{0},p_{1},\ldots, p_{s}$ are all distinct from
the points $u_{1},\ldots, u_k (k= n_{\infty} + n_{c} + n_{0})$.  Let
$K$ be the
    meromorphic matrix function given in (\ref{matrix}).  The
condition 
\begin{equation} \label{exists}  
 U_{0} \cdot {\rm row}_{j \in {\rm II} \cup {\rm III}} [
B_{\zeta_{n_{\zeta_{j}}}}] 
 = {\rm row}_{i \in {\rm I} \cup {\rm 
 II}}[ U_{i}]  \cdot  \Gamma_{\cD}^{D_{0}} 
\end{equation}  
is necessary and sufficient for the germ  of the meromorphic matrix
function $K$
given by (\ref{matrix}) at $u_{0}$ to satisfy
 \begin{equation} \label{germK}
     \cO_{u_0}^{r \times r} K  \subset  \mathcal{O}_{u_{0}}^{r\times
r} F_{\mathcal{D}} 
     \text{ for all } u_{0} \in M \setminus \{p_{1}, \dots, p_{g}\}.
\end{equation}
In particular, if $\Gamma^{D_{0}}_{\cD}$ 
    is invertible, then for a fixed $r\times r$-matrix $U_{0}$ there
exists a unique meromorphic matrix function $K$ satisfying
\begin{enumerate}
    \item $K(p_{0}) = U_{0}$,
    \item $K$ satisfies \eqref{germK}, and
    \item each entry of $K$ has a possible pole at $p_{j}$ of order 
    at most $n_{j}$ for $j = 1, \dots, s$.
\end{enumerate}
\end{prop}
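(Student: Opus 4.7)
The plan is to reduce the proposition to Proposition~\ref{P:test} applied row by row, supplemented by Proposition~\ref{P:nonspecialint} to secure uniqueness.

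For the first assertion, the key observation is that the $\ell$-th row of $K$ defined by \eqref{matrix} has precisely the form \eqref{row}, with $\bu_0$ and the $\bu_i$ replaced by the $\ell$-th rows of $U_0$ and of the $U_{u_i}$, respectively. The inclusion \eqref{germK} at a point $u_i$ in the support of $\cD$ is equivalent to the analogous row-vector inclusion for each row of $K$, which by Proposition~\ref{P:test} reduces to equation \eqref{test} for that row; stacking these row equations over $\ell=1,\dots,r$ produces precisely the matrix equation \eqref{exists}. At points $u_0$ outside both the support of $\cD$ and the set $\{p_1,\dots,p_g\}$, the function $F_\cD$ is analytic and invertible, while $K$ is analytic there (the only poles of $f^{D_0}_{u_i,A_{\pi_{u_i}}}$ occur at $u_i$ and at $p_1,\dots,p_s$), so the inclusion \eqref{germK} is automatic.

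For the second assertion, assume $\Gamma^{D_0}_\cD$ is invertible. Then for fixed $U_0$ the block row $\mathrm{row}_{i\in\mathrm{I}\cup\mathrm{II}}[U_i]$ is uniquely determined by \eqref{exists}, and $K$ is then determined by \eqref{matrix}. Property (1) holds because each $f^{D_0}_{u_i,A_{\pi_{u_i}}}$ vanishes at $p_0$; property (2) holds by the first assertion; property (3) holds because each entry of $f^{D_0}_{u_i,A_{\pi_{u_i}}}$ has a pole of order at most $n_j$ at $p_j$, as recorded in the discussion following \eqref{fD0wA}.

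It remains to establish uniqueness among all meromorphic matrix functions satisfying (1)--(3), not merely those having the explicit form \eqref{matrix}. If $K_1$ and $K_2$ both satisfy (1)--(3), then each row of $K_1-K_2$ vanishes at $p_0$ by (1), lies in $\cO^{1\times r}_{u_i}\cdot F_\cD$ at every point $u_i$ in the support of $\cD$ by (2), is analytic off $\{p_1,\dots,p_g\}\cup\{u_1,\dots,u_k\}$, and has a pole of order at most $n_j$ at $p_j$ by (3). These are precisely the conditions defining $\cL_*((-D_0)\otimes\cD)$. By Proposition~\ref{P:nonspecialint}, each such row is of the form \eqref{row0} with $\bu\,\Gamma^{D_0}_\cD=0$, and invertibility of $\Gamma^{D_0}_\cD$ forces $\bu=0$, whence the row vanishes and $K_1=K_2$. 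I expect the translation of the matrix-level uniqueness statement to the row-by-row characterization of $\cL_*((-D_0)\otimes\cD)$ to be the most delicate bookkeeping step.
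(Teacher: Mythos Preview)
Your proof is correct and follows essentially the same approach as the paper: the first assertion is obtained by applying Proposition~\ref{P:test} row by row (exactly as the paper does), and existence under invertibility of $\Gamma^{D_0}_{\cD}$ is handled identically. For uniqueness the paper argues directly that any $K$ satisfying (1)--(3) must already have the form \eqref{matrix} (by matching principal parts at the $u_i$ and using that $L(D)$ consists only of constants when $D$ is nonspecial of degree $g$), whereas you instead take the difference $K_1-K_2$ and invoke Proposition~\ref{P:nonspecialint} to identify it with an element of the left kernel of $\Gamma^{D_0}_{\cD}$; this is a clean repackaging of the same idea, since the proof of Proposition~\ref{P:nonspecialint} contains precisely that pole-structure argument.
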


\begin{proof} 
The existence part of the result follows from the condition
(\ref{interp}) which gives
necessary and sufficient conditions for the rows of $K$ to 
belong to $\mathcal{O}_{u_{0}}^{1\times r}\cdot F_{\mathcal{D}}.$  We
omit further detail.  As for the uniqueness, the conditions imply 
that each entry of $K$ has its only poles in $\{u_{i} \colon i \in 
{\rm I} \cup {\rm II}\} \cup \{p_{1}, \dots, p_{s}\}$ with the
various 
multiplicities controlled by $\cD$ and $D_{0}$.  This forces $K$ to 
have the form \eqref{matrix}.  Specifying the value of $K$ at $p_{0}$ 
determines the matrix $U_{0}$.  The fact that $K \in \cO^{r 
\times r}_{u_{j}} F_{\cD}$ for $j \in {\rm II} \cup {\rm III}$ then
leads to the system of 
equations \eqref{exists} which is just a matrix version of 
\eqref{test}.
The assumption that $\Gamma^{D_{0}}_{\cD}$ is invertible then leads 
to the remaining coefficients $U_{1}, \dots, U_{n_{P}}$ 
being uniquely determined.
\end{proof}

Proposition \ref{P:Kexists} as presented here  is a corrected version
of Proposition 8 from
 \cite[p. 152]{BCV}. (The assertion that $F^{-1}$ is 
analytic off $\{ z_{1},\ldots,z_{k},p_{0},\ldots,p_{g}\}$ should not
have been included in the statement of Proposition 8 in 
\cite[p. 152]{BCV}.)

\smallskip

We can now establish the following result:

\begin{thm}   \label{T:NSF}
Let $E$ be a rank $r$ vector bundle of degree zero over the closed
Riemann surface $M$ of genus $g$ and let $\zeta$ be 
the corresponding factor of automorphy on $\widehat{M}$. If the
vector bundle $E$ has property NSF, then there exists a
nonspecial
divisor $D_{ns}$, which we write out in the more detailed form
$D_{ns} = n_{1}p_{1} +\cdots + n_{s}p_{s}$ where $p_{1}, \dots, 
p_{s}$ are distinct with respective multiplicities $n_{1}, \dots, 
n_{s}$ adding up to $g$, and a trivialization $F$ of
$\zeta$ such that the only poles of entries of $F$ 
are at points in $\widehat{M}$ over the points $p_{j}$ in the support 
of $D_{ns}$, with pole order at $p_j$ at most equal to  the 
multiplicity $n_{j}$ of $p_{j}$ in $D_{ns}$ for $j=1,\ldots s$.  
\end{thm}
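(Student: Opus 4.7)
My plan is to combine Proposition \ref{P:Kexists} with the global correspondence \eqref{globalcorrespondence} of Section \ref{S:4.1} to explicitly assemble the trivialization $F$. Since NSF is preserved under bundle adjoint on degree-$0$ bundles (as noted in the paper preceding the theorem), $E^{*}$ is also NSF, so there exists a degree $g-1$ divisor of the form $D_{0}=D_{ns}-p_{0}$ with $h^{0}(\lambda_{D_{0}}\otimes E^{*})=0$ and $D_{ns}=n_{1}p_{1}+\cdots+n_{s}p_{s}$ nonspecial. Perturbing within the (open dense) nonspecial locus in $M^{(g)}$ allows the $p_{j}$ to be chosen disjoint from any prescribed finite set. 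Let $\zeta^{*}$ denote the factor of automorphy of $E^{*}$; note $\zeta^{*}=\zeta^{-\top}$. The Third Interpolation Problem, solved unconditionally via Grauert at the end of Section \ref{S:4.1}, provides a meromorphic trivialization $F_{\cD}$ of $\zeta^{*}$ whose right matrix null-pole divisor $\cD$ has degree $0$; after perturbation we may assume $\mathrm{supp}(\cD)\cap\{p_{0},p_{1},\ldots,p_{s}\}=\emptyset$. By construction $E_{\cD}^{*}\cong E^{*}$.

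Next, Proposition \ref{P:nonspecialint} identifies the dimension of the left kernel of $\Gamma^{D_{0}}_{\cD}$ with $h^{0}(\lambda_{D_{0}}\otimes E_{\cD}^{*})=0$, and since $\deg\cD=0$ makes $\Gamma^{D_{0}}_{\cD}$ square, it is invertible. Proposition \ref{P:Kexists} then produces, for each invertible $r\times r$ matrix $U_{0}$, a meromorphic matrix function $K$ on $M$ with $K(p_{0})=U_{0}$, pole order at most $n_{j}$ at $p_{j}$, and $\cO^{r\times r}_{u_{0}}K\subset \cO^{r\times r}_{u_{0}}F_{\cD}$ at every $u_{0}\notin\{p_{1},\ldots,p_{s}\}$. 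I would then set
\[
F(u):=F_{\cD}(u)^{-\top}K(\rho(u))^{\top}
\]
on $\widehat{M}$. The identity $F_{\cD}(Tu)^{-\top}=\zeta^{*}(T,u)^{-\top}F_{\cD}(u)^{-\top}$ together with $(\zeta^{*})^{-\top}=\zeta$ yields $F(Tu)=\zeta(T,u)F(u)$, so $F$ trivializes $\zeta$. For the pole structure, the row-module containment $\cO K\subset \cO F_{\cD}$ gives a local factorization $K=HF_{\cD}$ with $H\in\cO^{r\times r}_{u_{0}}$ at each $u_{0}\notin\{p_{j}\}$, so that $F=H^{\top}$ is holomorphic there; at $p_{j}$, $F_{\cD}^{-\top}$ is holomorphic and invertible, so $F$ inherits the pole structure of $K^{\top}$, with order at most $n_{j}$.

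The main obstacle I anticipate is verifying that $H$ is actually invertible (not merely holomorphic) at each point of $\mathrm{supp}(\cD)$, so that $F$ is nondegenerate on all of $\widehat{M}$. This reduces to showing that $\det K$ has the same local order as $\det F_{\cD}$ at each such point. The matching holds because $K$ is built in formula \eqref{matrix} from the same left-pole-pair data $(A_{\pi_{u_{i}}},C_{\pi_{u_{i}}})$ as $F_{\cD}$, and the interpolation system \eqref{exists} enforces the corresponding zero behavior at each $u_{j}\in\mathrm{supp}(\cD)$; a genericity argument in the choice of $U_{0}$ disposes of any residual degenerate cases.
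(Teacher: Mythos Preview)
Your approach is essentially the same as the paper's: take a trivialization $F_{\cD}$ (the paper's $G$) of $\zeta^{*}$ with right divisor $\cD$, use invertibility of $\Gamma^{D_{0}}_{\cD}$ (via Proposition~\ref{P:nonspecialint}) to feed Proposition~\ref{P:Kexists}, and set $F=F_{\cD}^{-\top}K^{\top}$ (the paper's $G_{*}K^{\top}$). The one bookkeeping difference is that you invoke NSF of $E^{*}$ to get $h^{0}(\lambda_{D_{0}}\otimes E^{*})=0$, which matches Proposition~\ref{P:nonspecialint} (stated for $E_{\cD}^{*}$) slightly more directly than the paper's own phrasing.

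Your ``main obstacle'' is not an obstacle. A trivialization is only required to be \emph{nondegenerate}, i.e.\ $\det F\not\equiv 0$; it is allowed to have zeros. Since $K(p_{0})=U_{0}$ is invertible, $\det K\not\equiv 0$, and since $F_{\cD}$ is already a nondegenerate trivialization, $\det F=(\det F_{\cD})^{-1}\det K\not\equiv 0$. There is no need to show that $H$ is pointwise invertible on $\operatorname{supp}(\cD)$, nor to match the local orders of $\det K$ and $\det F_{\cD}$, nor to invoke genericity in $U_{0}$; any invertible $U_{0}$ works. The paper disposes of this in one line: ``Since $K$ is a nondegenerate meromorphic function, $F=G_{*}K^{\top}$ is also a trivialization of $\zeta$.''
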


\begin{proof} 
 Assume that $E$ has property NSF.  Then we may choose a nonspecial divisor 
 $D$ so that $h^{0}(\lambda_{D} \otimes E) = 0$  Let $G$ be a trivialization
of  the factor of automorphy $\zeta^{*}$ 
 associated with $E^{*}$, so that $G_{*}=(G^{\top})^{-1}$ is a
trivialization of the factor of automorphy $\zeta$  
 associated to $E$.   Let $\mathcal{D}$ be the right matrix divisor of
$G$ restricted to $M_{0}$. 
Note that $E^{*}\cong E^*_{\mathcal{D}}$.  Since 
$h^{0}(\lambda_{D_{0}} \otimes E_{\cD}) = h^{0}(\lambda_{D_{0}} 
\otimes E) = 0$, it follows from Proposition \ref{P:nonspecialint} 
that the matrix $\Gamma^{D_{0}}_{ \cD}$ is invertible.  We
may then apply Proposition \ref{P:Kexists} to get a uniquely 
determined nondegenerate meromorphic matrix function $K$  
satisfying properties (1), (2), (3) with
$U_{0} = I_{r}$ (or any invertible matrix). By taking transposes, 
we may conclude from property (2) that the germ of $K^{\top}$ belongs to
 $G^{\top}\cdot\mathcal{O}^{r\times r}_{u_0} =
(G_{*})^{-1}\cdot\mathcal{O}^{r\times r}_{u_0}$ for $u_0 \neq
p_1,\ldots, p_g.$ 
 Thus $G_{*}K^{\top}$ is analytic off  $p_1,\ldots, p_g.$  Since $K$
 is a nondegenerate meromorphic function, 
$F=G_{*}K^{\top}$ of $\zeta$ is also a trivialization of $\zeta$. 
The only poles of entries of $F$ are at the points $p_{j}$,  with
multiplicity at most $n_{j}$, $ j=1,\ldots,s$. 
Thus $F$ is a trivialization of $\zeta$  with the desired
properties. 
\end{proof}

The next result assures us  that  degree zero vector bundles 
having property NSF always have trivializations with pole behavior analogous to that of the
trivializations \eqref{tatiyah} when $g=1$.  The proof of this result 
depends on the fact that the entries of the matrix $\Gamma^{D_{0}}_{\cD}$ depend 
continuously on the divisor $D$; we postpone the proof of this latter result 
to Section \ref{S:explicit} where we discuss explicit formulas for 
the entries of the matrix $\Gamma^{D_{0}}_{\cD}$.

\begin{thm}  \label{T:NSF'}
Let $E$ be a rank $r$ vector bundle of degree zero over the closed
Riemann surface $M$ of genus $g$ and let $\zeta$ be 
the corresponding factor of automorphy on $\widehat{M}$. If the
vector bundle $E$ has property NSF, 
then there exists a non-special
divisor $p_{1} +\cdots +p_{g}$, where the points $p_1, \dots, p_g$
are distinct
and a trivialization $F$ of $\zeta$ such that the only poles of
entries of $F$ are simple poles at points in $\widehat{M}$ 
over the points $p_{1}, \dots, p_{g}$. 
\end{thm}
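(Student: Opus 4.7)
The plan is to obtain Theorem \ref{T:NSF'} by perturbing the nonspecial divisor $D_{ns}$ produced by Theorem \ref{T:NSF} to a nearby nonspecial divisor of the form $p_1 + \cdots + p_g$ with $g$ distinct points, in such a way that the invertibility of the matrix $\Gamma^{D_0}_{\cD}$ is preserved. The two key ingredients are, first, the Farkas--Kra fact recalled in Section \ref{S:4.2} that nonspecial divisors form an open subset of the symmetric product $M^{(g)}$ and that distinct-support divisors are dense in $M^{(g)}$, and second, the continuous dependence of the entries of $\Gamma^{D_0}_{\cD}$ on the divisor $D_0$, which is stated just before the theorem and is to be proved in Section \ref{S:explicit}.

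First I would run the proof of Theorem \ref{T:NSF} to the point where one has fixed a trivialization $G$ of the factor of automorphy $\zeta^*$ associated with $E^*$, with right matrix null-pole divisor $\cD$ on $M_0$ (so that $E \cong E_{\cD}^*$), and one knows by the NSF hypothesis and Proposition \ref{P:nonspecialint} that there is at least one nonspecial divisor $D_{ns}$ disjoint from the support of $\cD$ and a point $p_0$ disjoint from both, for which $\Gamma^{D_0}_{\cD}$ with $D_0 = D_{ns} - p_0$ is invertible. Next, using the openness of the nonspecial locus together with density of distinct-support divisors, I would choose a nonspecial divisor $D' = p_1 + \cdots + p_g$ with all $p_j$ distinct, all distinct from $p_0$ and from the support of $\cD$, and arbitrarily close in $M^{(g)}$ to $D_{ns}$. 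Setting $D_0' = D' - p_0$, the continuity result from Section \ref{S:explicit} applied to the entries of $\Gamma^{D_0'}_{\cD}$ guarantees, by the openness of the invertibility condition, that after shrinking the perturbation if necessary the matrix $\Gamma^{D_0'}_{\cD}$ is still invertible.

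With this perturbed divisor in hand, I would apply Proposition \ref{P:Kexists} with $U_0 = I_r$ (using the fact that each multiplicity $n_j$ in $D'$ equals $1$) to produce a unique nondegenerate meromorphic matrix function $K$ on $M$ whose only poles are simple poles at $p_1, \dots, p_g$, and such that $\cO^{r \times r}_{u_0} K \subset \cO^{r \times r}_{u_0} F_{\cD}$ off $\{p_1, \dots, p_g\}$. Exactly as at the end of the proof of Theorem \ref{T:NSF}, taking transposes shows that $F := G_* K^\top$ is a trivialization of $\zeta$, and the pole structure of $K$ transfers to $F$: the only poles of entries of $F$ are simple poles at points of $\widehat{M}$ lying over $p_1, \dots, p_g$, as required.

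The main obstacle is purely the continuity of $\Gamma^{D_0}_{\cD}$ as a function of $D_0$, since everything else is either already proved or follows by openness and a short application of Proposition \ref{P:Kexists}. The definitions \eqref{gamma-a}--\eqref{gamma-b} make the continuity statement plausible once one knows that the canonical functions $f^{D_0}_{w,A}$ vary continuously with $D_0$ in an appropriate sense, but a genuinely rigorous argument requires the explicit theta-function formulas developed in Section \ref{S:explicit}, which is why the deduction of Theorem \ref{T:NSF'} from Theorem \ref{T:NSF} is deferred to that section.
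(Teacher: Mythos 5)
Your proposal is correct and follows essentially the same route as the paper's proof: start from a nonspecial $D$ with $\Gamma^{D_0}_{\cD}$ invertible (via NSF and Proposition \ref{P:nonspecialint}), perturb to a nearby distinct-support nonspecial $D'$ using the Farkas--Kra density fact, invoke continuity of $\Gamma^{D_0}_{\cD}$ in $D_0$ (Corollary \ref{C:contdep}) together with openness of invertibility, and then rerun the Theorem~\ref{T:NSF}/Proposition~\ref{P:Kexists} construction to produce $F = G_* K^\top$ with only simple poles over $p_1,\dots,p_g$. The only difference is that you spell out the final application of Proposition \ref{P:Kexists} and the transpose step, which the paper compresses into ``We now use the construction in Theorem \ref{T:NSF}.''
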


\begin{proof}  Suppose that the bundle $E$ has property NSF with 
    associated nonspecial divisor $D =  p_{1} + \cdots +  
    p_{g}$  and $D_{0} =  p_{1} + \cdots + p_{g} - p_{0}$ 
such that $h^{0}(\lambda_{D_{0}} \otimes E) = 0$.  Again by  Proposition 
    \ref{P:nonspecialint} we get that the matrix 
    $\Gamma^{D_{0}}_{\cD}$ is invertible.  
   A consequence of \cite[page 
    91]{FarkasKra} already noted is that the nonspecial divisor $D =
     g_{1} + \cdots + p_{g}$ can be approximated arbitrarily 
    well (in the topology of $M^{(g)}$) by a nonspecial divisor $D' = 
    p'_{1} + \cdots + p'_{g}$ with $p'_{1}, \dots, p'_{g}$ distinct.
    The block entries of the matrix 
    $\Gamma^{D_{0}}_{\cD}$ involve the building blocks 
    $f_{k w}^{D_{0}}$ as in formula \eqref{fD0wA}.  We keep the local 
    admissible Sylvester data sets $\Upsilon_{q_{0}}$ \eqref{localdata}
    fixed and perturb only the divisor $D_{0}$.  Then the matrix 
    entries of $\Gamma^{D_{0}}_{\cD}$ move continuously as long as 
    the canonical scalar functions $f^{D_{0}}_{k w}$ are continuous 
    with respect to the support $p_{1} + \cdots + p_{g} \in M^{(g)}$ 
    of $D_{0}$, which is precisely the content of Corollary 
    \ref{C:contdep} discussed below.
 Since invertibility is an open condition, it follows that 
$\Gamma^{D'_{0}}_{\cD}$ is again invertible as long as the point 
$( p'_{1}, \dots, p'_{g})$) is arranged to be sufficiently close to 
$(p_{1}, \dots, p_{g})$ in $M^{(g)}$.  We now 
use the construction in Theorem \ref{T:NSF} to see that $E$ has a 
trivialization $F$ having only possible poles occurring  at the points
$p'_{1}, \dots, p'_{g}$ with pole order at $p'_{j}$ at most $1$.
 \end{proof}

\subsection{Automorphic interpolants with given divisor} \label{S:4.3}
Let $\mathcal{D}$ be a degree zero null-pole divisor. We say that the 
meromorphic $r \times r$ matrix-valued function $F$ {\em interpolates 
the right matrix null-pole divisor} $\cD$ if
\begin{equation}  \label{eq}
    \cO^{r \times r}_{u_{0}} F = \cO^{r \times r}_{u_{0}} F_{\cD}
    \text{ for all } u_{0} \in M.
\end{equation}
An equivalent condition is that 
$$
\cO^{1 \times r}_{u_{0}} F = \cS( \Upsilon_{u_{0}}, u_{0}) \text{ for 
all } u_{0} \in M
$$
if the divisor $\cD$ is given in terms of tagged $0$-admissible
Sylvester data sets
as in \eqref{data}.  We now present our solutions of the First and 
Second Interpolation Problems from the Introduction.

To formulate the solution, it is useful to introduce another
definition and some additional notation.  
Given the degree zero null-pole 
divisor $\cD$ as in \eqref{data}, let us say that a divisor $D_{0}$ 
is {\em $\cD$-admissible} if $D_{0}$ is a degree $(g-1)$ divisor of 
the form
 \begin{equation}\label{cD-admis}
  D_{0} = p_{1} + \dots + p_{g} - p_{0}
\end{equation}
where  $D = p_{1} + \cdots + p_{g}$ is nonspecial with distinct
points $p_{0}, \dots, p_{g}$ 
distinct from $u_{1}, \dots, u_{k}$ ($k = n_{\infty} + n_{c} + 
n_{0}$).  The additional notation is:
\begin{align}
& R_{ij}=\text{res}_{p_{j}}[f^{D_{0}}_{u_{i},
A_{\pi_{i}}}(u)C_{\pi_{i}}] \text{ for } i=1,\ldots, n_{P}, \,  
j=1,\ldots, g, \notag \\
&  R=  \begin{bmatrix} 
    R_{11}& \cdots  & R_{1g} \\ 
\vdots &   &  \vdots  \\ 
R_{n_{_{P}}1} &\cdots & R_{n_{_{P}}g} \end{bmatrix}, 
\quad F^{D_{0}}_{A_{\pi}}(u) = {\rm diag.}_{i \in {\rm I} \cup {\rm 
II}} [ f^{D_{0}}_{u_{i}, A_{\pi_{i}}}(u) ], \notag  \\
&  B_{\zeta}= {\rm row}_{j \in {\rm II} \cup {\rm III}} [ 
B_{\zeta_{i}} ], 
\quad  C_{\pi} = {\rm col.}_{j \in {\rm I} \cup {\rm II}} [
C_{\pi_{j}} ].  \label{notation} 
\end{align}

\begin{thm} \label{T:merint}  Let $\cD$ be a degree zero null/pole 
    divisor as in \eqref{data}.  Then:
  \begin{enumerate}
     \item  {\rm \textbf{Solution of the First Interpolation
Problem:}}
   The First Interpolation Problem has a solution, i.e., there exists 
   a (single-valued) meromorphic function $F$ on $M$ which 
   interpolates the divisor $\cD$, if and only if, for any choice of 
   $\cD$-admissible divisor $D_{0}$ as in \eqref{cD-admis}, the 
   the matrix $\Gamma^{D_{0}}_{\cD}$given by \eqref{gamma} is
invertible 
   with inverse $(\Gamma^{D_{0}}_{\cD})^{-1}$ 
    satisfying the side constraint (with notation as in 
    \eqref{notation})
   \begin{equation}  \label{side}
   B_{\zeta} (\Gamma^{D_{0}}_{\cD})^{-1} R = 0.
   \end{equation}
   When this is the case, then the unique interpolant with invertible
value 
   $U_{0}$ at $p_{0}$ is given by
\begin{equation}  \label{autint}  
    K(u) = U_{0} ( I_{r} + B_{\zeta} ( \Gamma^{D_{0}}_{\cD})^{-1}
F^{D_{0}}_{A_{\pi}}(u) C_{\pi})
\end{equation}

\item {\rm \textbf{Solution of the Second Interpolation  Problem:}}  A sufficient 
condition for the Second Interpolation Problem to have a solution, i.e., 
for the existence a relatively automorphic meromorphic matrix function 
$\widehat F$ on $\widehat M$ with a flat factor of automorphy $\zeta_{\widehat F}$ 
which interpolates the null/pole divisor $\cD$, is that there exist a $\cD$-admissible 
divisor $D_{0}$ as in \eqref{cD-admis} so that the matrix $\Gamma^{D_{0}}_{\cD}$ 
given by \eqref{gamma} is invertible.  If $M$ has genus $g=1$, then this sufficient condition is also 
necessary.
\end{enumerate}
\end{thm}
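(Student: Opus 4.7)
My plan for part (1) is to take \eqref{autint} as the natural candidate interpolant and reduce matters to Proposition~\ref{P:Kexists}. Writing $U_0 B_\zeta (\Gamma^{D_0}_\cD)^{-1}$ as a conformally partitioned block row $[U_1,\ldots,U_{n_P}]$, the function $K$ is of the form \eqref{matrix}, and the identity $U_0 B_\zeta = [U_1,\ldots,U_{n_P}]\Gamma^{D_0}_\cD$ is precisely condition \eqref{exists}. Proposition~\ref{P:Kexists} then guarantees the correct right-module germ at every $u_0 \in M \setminus \{p_1,\ldots,p_g\}$, and the only remaining obstruction is possible poles at the $p_j$. A direct residue computation--using that $C_{\pi_i}$ is constant, so $\text{res}_{p_j}(f^{D_0}_{u_i,A_{\pi_i}}C_{\pi_i}) = R_{ij}$--shows the residue of $K$ at $p_j$ equals $U_0 B_\zeta (\Gamma^{D_0}_\cD)^{-1}$ times the $j$-th block column of $R$; since $U_0$ is invertible, simultaneous vanishing over all $j$ is exactly \eqref{side}. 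Given \eqref{side}, $K$ is globally meromorphic on $M$ with the correct right germ everywhere; nondegeneracy follows from $K(p_0) = U_0$, and holomorphy of $KF_\cD^{-1}$ on the compact surface $M$ forces it to be a constant invertible matrix, yielding the equality of row-modules. Conversely, if some single-valued meromorphic $F$ interpolates $\cD$, then $E_\cD^*$ is holomorphically trivial, so $h^0(\lambda_{D_0}\otimes E_\cD^*) = r\cdot h^0(\lambda_{D_0}) = 0$ for any nonspecial $D$, and Proposition~\ref{P:nonspecialint} gives invertibility of $\Gamma^{D_0}_\cD$; applying the uniqueness clause of Proposition~\ref{P:Kexists} with $U_0 = F(p_0)$ yields that the $K$ of \eqref{autint} coincides with $F$, and since $F$ is holomorphic at the $p_j$ the same residue computation forces \eqref{side}.

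For the sufficiency direction of part (2), invertibility of $\Gamma^{D_0}_\cD$ gives $h^0(\lambda_{D_0}\otimes E_\cD^*) = 0$ via Proposition~\ref{P:nonspecialint}, so $E_\cD^*$ has property NSF and is therefore flat by Weil's characterization. Hence $\zeta_\cD$ (which represents $E_\cD^*$) is equivalent to some flat factor of automorphy $\zeta_{\mathrm{flat}}$: concretely there exists a holomorphic invertible $h$ on $\widehat M$ with $h(Tu)\zeta_\cD(T,u) = \zeta_{\mathrm{flat}}(T,u)h(u)$. Setting $\widehat F := h F_\cD$ produces a meromorphic matrix function on $\widehat M$ which is relatively automorphic with the flat factor $\zeta_{\mathrm{flat}}$, and because $h$ is holomorphic and invertible at every point of $\widehat M$, left multiplication by $h$ preserves right germs, so $\widehat F$ still interpolates (the lift of) $\cD$.

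For the necessity in genus $g=1$, suppose a relatively automorphic $\widehat F$ with flat factor $\zeta_{\mathrm{flat}}$ interpolates $\cD$. Then $h := \widehat F F_\cD^{-1}$ is globally holomorphic and invertible on $\widehat M$ (since $\widehat F$ and $F_\cD$ have identical right germ modules at every point), and the automorphy computation of the previous paragraph, run in reverse, shows $\zeta_\cD$ is equivalent to $\zeta_{\mathrm{flat}}$; thus $E_\cD^*$ is flat. The text records that in genus one the flat and NSF classes of degree-zero bundles coincide, so $E_\cD^*$ is NSF, and Proposition~\ref{P:nonspecialint} then produces a nonspecial $D$, hence a $\cD$-admissible $D_0$, for which $\Gamma^{D_0}_\cD$ is invertible. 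The main obstacle throughout is not any single deep step--each link in the chain is short once the framework of Sections~\ref{S:4.1} and \ref{S:4.2} is in place--but rather the bookkeeping: keeping consistent the correspondence between $\zeta_\cD$ and the row-section bundle $E_\cD^*$ (as opposed to $E_\cD$), and correctly extracting the residue matrix $R$ of \eqref{notation} from the definition \eqref{gamma-a} of the blocks $\Gamma^{D_0}_{\cD,ij}$, which is what drives the precise form of the side condition \eqref{side}.
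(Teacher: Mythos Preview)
Your necessity argument for part (1) and both directions of part (2) are essentially correct and close in spirit to the paper's approach (the paper simply cites \cite{BCV} for (2), but your direct argument via Proposition~\ref{P:nonspecialint} and Weil's criterion is exactly what underlies that citation).

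There is, however, a genuine gap in your sufficiency argument for part (1). You write that ``holomorphy of $KF_{\cD}^{-1}$ on the compact surface $M$ forces it to be a constant invertible matrix.'' But $F_{\cD}$ is a trivialization of the factor of automorphy $\zeta_{\cD}$ on $\widehat M$: it satisfies $F_{\cD}(Tu)=\zeta_{\cD}(T,u)F_{\cD}(u)$, and there is no reason at this stage for $\zeta_{\cD}$ to be trivial (indeed, that is precisely what you are trying to establish). Consequently $H:=K F_{\cD}^{-1}$ satisfies $H(Tu)=H(u)\,\zeta_{\cD}(T,u)^{-1}$ and is \emph{not} single-valued on $M$; it is a holomorphic matrix function on the noncompact surface $\widehat M$, where Liouville-type arguments do not apply. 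So you have only the containment $\cO_{u_0}^{r\times r}K\subset \cO_{u_0}^{r\times r}F_{\cD}$ from Proposition~\ref{P:Kexists}, and the step from containment to equality is missing.

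The paper closes this gap with a degree/winding-number argument rather than a compactness argument: since $K$ is single-valued on $M$, $\det K$ has winding number zero around the boundary of the fundamental domain $M_0$; since $F_{\cD}$ interpolates a divisor of degree zero, $\det F_{\cD}$ also has winding number zero around $\partial M_0$; hence $\det H$ has winding number zero, and being holomorphic it therefore has no zeros in $M_0$. This gives invertibility of $H$ on $\widehat M$ (not constancy), which is all that is needed for the equality $\cO_{u_0}^{r\times r}K=\cO_{u_0}^{r\times r}F_{\cD}$.
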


\begin{proof}  Statement (2) is the content of Corollary 7 from 
    \cite{BCV}.  It remains to verify statement (1).

Assume first that there is an $r\times r$-meromorphic
matrix function $G$ on $M$ interpolating the null-pole data
$\mathcal{D}$.
We first note that the existence of such a $G$ is equivalent to the
holomorphic triviality of
the the bundle $E_\mathcal{D}$ and the factor of automorphy
$\zeta_{\mathcal{D}}.$ As a consequence,
$h^{0}(\lambda_{D_{0}}\otimes E_{\mathcal{D}}^{*}) = 0 $.  It follows
from Proposition \ref{P:nonspecialint} that $\Gamma^{D_{0}}_\cD$ is
invertible.

For each $i \in {\rm I} \cup {\rm II}$, near $u_{i}$ ($i \in {\rm I}
\cup {\rm II}$) there is a 
$r\times n_{\pi_{i}}$-
matrix $\widetilde{B}_{i}$ such that 
\begin{equation}   \label{tildeG}
G(u)-\widetilde{B}_{i}((u-u_{i})-A_{\pi_{i}})^{-1}
\end{equation}
is analytic. Let $K$ be the meromorphic matrix function 
$$  
     K = G(p_{0}) + \sum_{i=1}^{n_{P}}\widetilde{B}_{i}
f^{D_{0}}_{w_{i},A_{\pi_{i}}}C_{\pi_{i}},
$$
which is of the form (\ref{matrix}). It follows that the matrix
function $H= G-K$  can only have poles at $p_{1},\ldots, p_{g}$.
Moreover, the divisors $(H_{ij})$ of the entries $H_{ij}$ of $H=G-K$
satisfy 
 $(H_{ij})+D_{0}\geq 0.$ Thus $G=K$.  Since $G$ does not have
poles at the points $p_{1},\ldots, p_{g},$ then
\begin{equation} \label{zeroresidue}
\begin{bmatrix} \widetilde{B}_{1} & \ldots & \widetilde{B}_{n_{P}} 
    \end{bmatrix} R = 0.
\end{equation}
From the fact that $K \in \cO^{r \times r}_{u_i} F_{\cD}$ for $i \in
{\rm 
II} \cup {\rm III}$, application of the criterion \eqref{test} to 
each row of $K$ shows that 
$$
 \begin{bmatrix} \widetilde{B}_{1} &\cdots &  \widetilde{B}_{n_{P}} 
     \end{bmatrix} \Gamma^{D_{0}}_{\cD}=G(p_{0})B_{\zeta}.
$$
As both $\Gamma^{D_{0}}_{\cD}$ and $G(p_{0}) : = U_{0}$ are 
invertible, we may solve uniquely for $\begin{bmatrix} \widetilde 
B_{1} & \cdots & \widetilde B_{n_{P}} \end{bmatrix}$ to get
$$
\begin{bmatrix} \widetilde B_{1} & \cdots & \widetilde B_{n_{P}} 
\end{bmatrix} = U_{0} B_{\zeta} ( \Gamma^{D_{0}}_{\cD})^{-1}.
$$
Substituting this expression for $\widetilde B_{i}$ into 
\eqref{zeroresidue} and \eqref{tildeG} leads us to the validity of 
the side condition \eqref{side} and to the formula \eqref{autint} for
$G$.

We next turn to the sufficiency of the stated conditions. We define 
$K(u)$ by \eqref{autint}.   
Note that $K$ has the form \eqref{matrix} with 
$$
\begin{bmatrix} U_{u_1} & \cdots & U_{u_{n_P}} \end{bmatrix} = U_0 
 \begin{bmatrix} B_{\zeta_1} & \cdots & B_{\zeta_{n_P}} \end{bmatrix}
(\Gamma^{D_0}_\cD)^{-1}.
$$
By Proposition \ref{P:Kexists}, $K$ is the unique meromorphic matrix
function satisfying conditions (1), (2), (3)
 in Proposition \ref{P:Kexists}.  The fact that condition
\eqref{side} is satisfied tells us that $K$ has no poles in 
 $\{p_1, \dots, p_g\}$.  We conclude that condition \eqref{germK}
actually holds at all $u_0 \in M$.  It remains to show that 
 \eqref{germK} actually holds with equality on all of $M$.
 
 To this end we let  $F_{\mathcal{D}}$ be a trivialization of
$\zeta_{\mathcal{D}}$.   
 We now view $K$ as an automorphic meromorphic matrix function on 
all of $\widehat M$.  From condition \eqref{germK} we read off that
the $r \times r$ matrix function $H: = K F_\cD^{-1}$
is holomorphic on all of $\widehat M$.  
Note that $K(p_{0}) =U_{0}$ is invertible, so $\det K$ does not vanish
identically.  As $\det K$ is a single-valued meromorphic matrix
function on $M$,  the winding number of 
$\det K$ around the boundary of the fundamental domain $M_0$ is zero.
The function $F_\cD$ a priori is multivalued when considered as a
function on $M$, but  since it is interpolating the
divisor $\cD$ which has degree equal to 0, it follows that the
winding number of $\det F_\cD$ around the 
boundary of $M_0$ is also zero.  As a consequence,  $\det H$ 
has no zeros on $M_{0}$.  This implies $H$ is invertible on
$\widehat{M}$ and hence
$$
\cO^{r \times r}_{u_{0}} K = \cO^{r \times r}_{u_{0}} H F_{\cD} = 
\cO^{r \times r}_{u_{0}} F_{\cD}
$$
for all $u_{0} \in \widehat M$, i.e., equality holds in \eqref{germK}
for all $u_0$ as
required.  This completes the proof.
\end{proof}

\begin{remark}   \label{R:confession}
{\em The First Interpolation Problem was addressed in \cite{BC} for
the case $g=1$ 
and in \cite{BCV} (see Theorem 9 there) for the case of arbitrary 
genus. The result in \cite{BC} stated the result only for the simple 
multiplicity case; the actual statement was somewhat more cumbersome 
since it was missed there that the matrix $\Gamma^{D_{0}}_{\cD}$ 
necessarily is invertible.  Theorem 9 in \cite{BCV} handles the 
general multiplicity case but the proof there has a gap since it
appears to rely on the
misstatements in Proposition 8 there mentioned above.  The present 
proof uses the corrected version  (Proposition \ref{P:Kexists} above)
of Proposition 8 from \cite{BCV}.}
\end{remark}

\begin{remark} \label{R:HIP/absBV}
   {\em Interpolation problems for relatively automorphic meromorphic 
    matrix functions on Riemann surfaces closely related to those 
    considered here were also studied in \cite{HIP, BV}.  The problem 
    considered in \cite{BV} was as follows:
     \begin{enumerate}
	\item [(IV)] \textbf{Fourth Interpolation Problem:}  Given two 
	flat factors of automorphy $\widetilde \zeta$ and $\zeta$ 
	such that $h^{0}(E_{\widetilde \zeta} \otimes \Delta) = 
	h^{0}(E_{\zeta} \otimes \Delta) = 0$ where $\Delta$ is a
	line bundle (or divisor) of half-order differentials (see 
	Section \ref{S:explicit} below) and 
	given a left matrix null/pole divisor $\cD$ on $M$
	(assumed to have 
	pole and zero order of at most 1 at each point), 
	find a meromorphic matrix function $\widehat G$ on $\widehat 
	M$ so that (i) $\widehat G(Tu) = \widetilde \zeta(T) \widehat 
	G(u) \zeta(T)^{-1}$ for all $u \in \widehat M$ and deck 
	transformations $T$, and (ii) the null/pole structure of $F$ 
	on $M$ is as prescribed by $\cD$.
\end{enumerate}
The interpolation problem considered in \cite{HIP} was the same with 
two modifications:  (1) rather than specifying the input factor of 
automorphy $\zeta$ (or equivalently, the associated bundle 
$E_{\zeta}$), it was only specified that there should be such a 
bundle and part of the problem was to solve also for this bundle, and 
(2)  the formulation was more concrete:  it was assumed that $M$ is 
the normalizing Riemann surface for an algebraic curve ${\mathbf C} = 
\{ \mu \in {\mathbb P}^{w} \colon {\mathbf p}(\mu) = 0\}$ embedded in 
projective space ${\mathbb P}^{2}$ and that the given bundle 
$E_{\widetilde \zeta}$ and the bundle to be found $E_{\zeta}$ are 
presented concretely as kernel bundles associated with determinantal 
representations of the polynomial ${\mathbf p}^{r}$ ($r$ equal to the rank of 
the $E_{\widetilde \zeta}$ and $E_{\zeta}$):
\begin{align*}
    &  E_{\widetilde \zeta} = \{( (\mu, u) \colon \mu \in {\mathbf C}, 
     \, u \in {\mathbb C}^{M} \colon (\mu_{2} \widetilde \sigma_{1} + \mu_{1} 
     \widetilde \sigma_{2} + \mu_{0} \widetilde \gamma) u = 0\}, \\
     & E_{\zeta} = \{( (\mu, u) \colon \mu \in {\mathbf C}, 
     \, u \in {\mathbb C}^{M} \colon (\mu_{2} \sigma_{1} + \mu_{1} 
     \sigma_{2} + \mu_{0} \gamma) u = 0\}
\end{align*}
where
$$
 {\mathbf p}(\mu)^{r} = \det( \mu_{2} \widetilde \sigma_{1} + \mu_{1} 
     \widetilde \sigma_{2} + \mu_{0} \widetilde \gamma) =
     \det (\mu_{2} \sigma_{1} + \mu_{1} 
     \sigma_{2} + \mu_{0} \gamma).
$$
The precise connection between the Fourth Interpolation Problem and 
the variant considered in \cite{HIP} is explained in some detail in 
Section 6 of \cite{BV}

To compare Problems (IV) and (I), we identify a special case of 
Problem (IV) which can be related to a special case of Problem (I) 
as follows.  We first note that one point of incompatibility between 
the two problems is that Problem (IV) is formulated in terms of a 
left matrix null/pole divisor while Problem (I) is formulated in 
terms of a right matrix null/pole divisor.  However, if $\cD$ as in 
\eqref{data} and \eqref{localdata} is a right matrix null/pole divisor, then
$\cD' = \{ \Upsilon_{q_{0}} \colon q_{0} \in M_{0}\}$ with
$$
\Upsilon_{q_{0}}' = \{ (C_{\pi_{q_{0}}}, A_{\pi_{q_{0}}}), 
  (A_{\zeta_{q_{0}}}, B_{\zeta_{q_{0}}}), - S_{q_{0}})
$$
is a left matrix null/pole divisor and if the meromorphic matrix 
function $\widehat F$ has right null/pole structure fitting $\cD$ 
if and only if $G = F^{-1}$ has left null/pole structure fitting 
$\cD'$.  Thus we may reformulate Problem (IV) in terms of $F = 
G^{-1}$ rather than $F$: then we must have an $F$ with prescribed 
right null/pole structure prescribed by the right matrix null/pole 
divisor $\cD$ over $M_{0}$ which in addition has the relative 
automorphy property
\begin{equation}  \label{matrixrelauto}
   F(Tu) =  \zeta(T) \widehat F(u) \widetilde \zeta(T)^{-1}.
\end{equation}

If we also insist that $\zeta = \widetilde \zeta = \zeta_{0} \otimes 
I_{r}$ for a flat scalar factor of automorphy $\zeta_{0}$, then the relative 
automorphy property \eqref{matrixrelauto} imposed on $F$ just means 
that $F$ is automorphic:  $F(Tu) = F(u)$ for all deck transformations 
$T$. Let $D = p_{1} + \cdots + 
p_{g}$ be a nonspecial divisor and choose $\zeta_{0}$ to be the flat 
line bundle so that so that 
$\zeta_{0} \otimes \Delta = \lambda_{D_{0}}$ where $D_{0} = D - 
p_{0}$.  Then the reformulation of 
Problem (IV) becomes exactly Problem (I).  
The one additional point is that the work in \cite{BV} was only carried out for the 
case where the divisor $\cD $ has pole and zero order of at most 1 at each point.
To compare solutions, for simplicity we shall also insist that poles 
and zeros are disjoint and of multiplicity 1.

We therefore assume that there are points $\mu_{1}, \dots, \mu_{N} \in 
M$ (the poles) and $\lambda_{1}, \dots, \lambda_{N} \in M$ 
(the zeros), all distinct, along with a specified nonzero $1 \times r$ 
row vector $u_{j}$ (the left pole vector at $\mu_{j}$) and  a nonzero 
$r \times 1$ column vector $x_{j}$ (the right null vector at $\lambda_{j}$) 
($j = 1, \dots, N$) so that
\begin{equation}   \label{BabyDivisor}
\Upsilon_{q_{0}} = \begin{cases} ((x_{j},0), (\emptyset, \emptyset), 
\emptyset) & \text{ if } q_{0} = \mu_{j}, \\
((\emptyset, \emptyset), (0, u_{i}), \emptyset) & \text{ if } q_{0} 
= \lambda_{i}, \\
 \emptyset & \text{ otherwise.}
\end{cases}
\end{equation} 
Then the solution criterion from \cite[Theorem 3.1]{BV} (after transcription from 
right to left formulation as explained above) is: {\em a (necessarily 
unique) solution exists if and only if
\begin{equation}   \label{Gamma0}
 \Gamma^{0} = \left[ \Gamma^{0}_{ij} \right]_{i,j=1}^{N} = 
 \left[ -u_{i}\left( K (\zeta_{0};  \mu_{i}, \lambda_{j}) \otimes I_{r} \right)
 x_{j} \right]_{i,j=1}^{N}
\end{equation}
is invertible, together with a linear side-constraint to guarantee 
that the solution has no poles at the points $p_{1}, \dots, p_{g}$.} 
Here $K(\zeta_{0}; \cdot, \cdot)$ is the Cauchy kernel associated 
with the flat factor of automorphy $\zeta_{0}$ (see the appendix Section 
\ref{S:explicit} below for a brief introduction to this Cauchy 
kernel).

We note that the same problem has a solution via statement (1) in 
Theorem \ref{T:merint}: {\em a (necessarily unique) solution exists if 
and only if the matrix 
$$ 
  \Gamma^{D_{0}}_{\cD} =[ -{\rm res}_{p = \lambda_{j}} 
  f^{D_{0}}_{\mu_{i}}(p) x_{i} u_{j} (z_{\lambda_{j}})^{-1}]_{i,j=1}^{N}
$$
is invertible, together with a linear side-constraint to guarantee 
that the solution has no poles at the points $p_{1}, \dots, p_{g}$.}
By our assumptions that poles and zeros are distinct,
$f^{D_{0}}_{\mu_{i}}$ is analytic at $\lambda_{j}$.  From the formula 
\eqref{fD0wfor} explained in Theorem \ref{T:fw-form} below, there is 
a connection between the building-block functions $f^{D_{0}}_{\mu}$ 
and the Cauchy kernel, namely: 
 $$
 f^{D_{0}}_{\mu_{i}}(\lambda_{j}) = \frac{K(\zeta_{0}; 
\lambda_{j}, \mu_{i})}{K(\zeta_{0}; \lambda_{j}, p_{0})} K(\zeta_{0}; 
\mu_{i},p_{0}). 
$$
Trivially ${\rm res}_{p = \lambda_{j}} 
(z_{\lambda_{j}})^{-1} = 1$ (where $z_{\lambda_{j}}$ is the local 
coordinate on $M$ centered at $\lambda_{j}$).  Hence the formula for 
$\Gamma^{D_{0}}_{\cD}$ in this case becomes
\begin{equation}   \label{BabyGammaD0cD}    
   \Gamma^{D_{0}}_{\cD} = \left[ - f^{D_{0}}_{\mu_{i}}(\lambda_{j}) x_{i} u_{j}
  \right]_{i,j=1}^{N}.
=  \left[-  \frac{K(\zeta_{0}; 
\lambda_{j}, \mu_{i})}{K(\zeta_{0}; \lambda_{j}, p_{0})} K(\zeta_{0}; 
\mu_{i},p_{0}) x_{i} u_{j} \right]_{i,j = 1}^{N}. 
\end{equation}
We note that the matrix $\Gamma^{D_{0}}_{\cD}$ is equal to the matrix 
$\Gamma^{0}$ \eqref{Gamma0} multiplied on the left and on the right 
by invertible diagonal matrices, i.e., invertibility of $\Gamma^{0}$ 
is equivalent to invertibility of $\Gamma^{D_{0}}_{\cD}$.  In this 
way we see directly the equivalence of the solutions of this special 
interpolation problem as given by Theorem \ref{T:merint} and as given 
by Theorem 3.1 in \cite{BV}. 
}\end{remark}

\begin{remark} {\em \textbf{Abel's theorem.} 
Let us specialize the setting of Remark \ref{R:HIP/absBV} even further by assuming that 
$r=1$, i.e., we wish to solve for a scalar meromorphic function (or 
more generally, relatively automorphic function with flat factor of 
automorphy) on $\widehat M$ with prescribed distinct simple zeros $\lambda_{1}, \dots, \lambda_{N}$ and 
prescribed distinct simple poles $\mu_{1}, \dots, \mu_{N}$ in $M$.  
We therefore assume that the divisor $\cD$ is given as $\cD = \{ 
\Upsilon_{q_{0}} \colon q_{0} \in M\}$ where $\Upsilon_{q_{0}}$ is 
given as in \eqref{BabyDivisor} with each $u_{j}$ and $x_{i}$ taken 
to be the complex number 1.  When combined with formula 
\eqref{Cauchykernelform2} from the Appendix for the Cauchy kernel, we 
see that the formula \eqref{BabyGammaD0cD} 
for $\Gamma^{D_{0}}_{\cD}$ simplifies further to
\begin{align}  
   & \Gamma^{D_{0}}_{\cD} = \left[ -f_{\mu_{i}}^{D_{0}}(\lambda_{j}) 
    \right]_{i,j=1}^{N} = \left[-  \frac{K(\zeta_{0}; \lambda_{j}, \mu_{i})  
    K(\zeta_{0}; \mu_{i},p_{0}) }     
{K(\zeta_{0}; \lambda_{j}, p_{0})} \right]_{i,j = 1}^{N}  
\label{scalarBaby}\\
& =\left[ -\frac{1}{\theta(\be)}  
\frac{\theta(\phi(\mu_{i}) - \phi(\lambda_{j}) + \be)}{E_{\Delta}(\mu_{i}, \lambda_{j})} 
\frac{E_{\Delta}(p_{0}, \lambda_{j})}{\theta(\phi(p_{0}) -\phi(\lambda_{j}) + \be)}
\frac{\theta(\phi(p_{0}) - \phi(\mu_{i}) + \be)}{E_{\Delta}(p_{0}, 
\mu_{i})}\right]
\notag 
\end{align}
We note that our Theorem \ref{T:merint} part (2) gives invertibility 
of $\Gamma^{D_{0}}_{\cD}$ for some $\cD$-admissible divisor $D_{0}$ 
as a sufficient condition for the existence of a flat relatively 
automorphic solution of the interpolation problem. 

On the other hand,
it is well known for this scalar version of the problem that, 
given such a scalar divisor $D = \lambda_{1} + \cdots + \lambda_{N} - 
\mu_{1} - \cdots - \mu_{N}$, there always exists a function $f_{D}$ on 
$\widehat M$ with flat factor of automorphy interpolating the data $D$ 
on $M$.  Such a function can be constructed using the prime form:
\begin{equation}   \label{primeformsol}
    f(p) = \frac{ \prod_{j=1}^{N} E_{\Delta}(p, \lambda_{j})}
    { \prod_{i=1}^{N} E_{\Delta}(p, \mu_{i})}.
\end{equation}
The construction in the single-valued case where 
$\sum_{i=1}^{N} \phi(\mu_{i}) = \sum_{j=1}^{N} \phi(\lambda_{j})$ mod the period lattice is carried 
out in Mumford's book\cite[pages 3.209--3.212]{MumfordII}; the reader can check 
that more generally the factor of automorphy in \eqref{primeformsol} is flat as long as 
the number of prescribed zeros is equal to the number of prescribed poles (counting multiplicities).

This stronger result for the scalar case can be explained as follows.
From the last formula for $\Gamma_{\cD}^{D_{0}}$ in 
\eqref{scalarBaby}, we see the factorization for $\Gamma_{\cD}^{D_{0}}$:
\begin{equation}   \label{Gamma-alt}
    \Gamma_{\cD}^{D_{0}} = -\frac{1}{\theta(\be)} \cdot \bD_{\bmu} \cdot \bM \cdot \bD_{\blam}
\end{equation}
where
\begin{align*}
    & \bD_{\bmu} = {\rm diag}_{1 \le i \le N} \left[ \frac{ 
    \theta(\phi(p_{0}) - \phi(\mu_{i}) + \be)}
    {E_{\Delta}(p_{0}, \mu_{i})} \right],  \\
    & \bM = \left[ \frac{\theta(\phi(\mu_{i}) - \phi(\lambda_{j}) + \be)}
    {E_{\Delta}(\mu_{i}, \lambda_{j})} \right]_{i,j=1}^{N},  \\
& \bD_{\blam} = {\rm diag}_{1 \le j \le N}\left[ \frac{E_{\Delta}(p_{0}, 
\lambda_{j})}{ \theta(\phi(p_{0}) - \phi(\lambda_{j}) + \be)}\right].
\end{align*}
As $\bD_{\bmu}$ and $\bD_{\blam}$ are invertible diagonal matrices, we 
see that $\Gamma_{\cD}^{D_{0}}$ is invertible if and only if the 
middle factor $\bM$ is invertible.  It turns out the $\det \bM$ can be 
computed explicitly (see Corollary 2.19 page 33 in Fay's book 
\cite{Fay}):
\begin{align}  
&  \det \bM  = \notag \\
&   \theta \left( \sum_{i} \phi(\mu_{i}) - \sum_{j} \phi(\lambda_{j}) 
+ \be \right) \theta(\be)^{N-1}
\frac{ \prod_{i<j} E_{\Delta}(\mu_{i}, \mu_{j}) 
E_{\Delta}(\lambda_{j}, \lambda_{i})}
{\prod_{i,j} E_{\Delta}(\mu_{i}, \lambda_{j})}.
\label{Fay}
\end{align}
Since $\theta(\be) \ne 0$ and the last factor is automatically nonzero by the assumption 
that the points $\mu_{1}, \dots, \mu_{N}, \lambda_{1}, \dots, 
\lambda_{N}$ are all distinct, we see that the criterion for $\det 
\Gamma^{D_{0}}_{\cD} \ne 0$ is given by
\begin{equation}   \label{Faycriterion}
 \theta\left(\sum_{i=1}^{N} \phi(\mu_{i}) - \sum_{j=1}^{N} 
 \phi(\lambda_{j}) + \be \right) \ne 0
\end{equation}
where(by \eqref{be} $\be = a \Omega + b$.
By inspection we see that condition \eqref{Faycriterion} holds for a 
generic choice of  line bundle $\zeta$.  We conclude that in the scalar case,
part (2) of Theorem \ref{T:merint} can be strengthened to: {\em the matrix 
$\Gamma^{D_{0}}_{\cD}$ is invertible for a generic choice of 
$\cD$-admissible divisor $D_{0}$ and the Second Interpolation Problem 
is always solvable.}

Unlike the classical solution in terms of prime forms from 
\cite{MumfordI}, our solution of the Second Interpolation Problem for 
the simple-multiplicity scalar case goes through Weil's 
characterization of flat bundles and is not constructive.  The 
formula \eqref{autint} used to solve the First Interpolation Problem 
for this case gives an automorphic (single-valued) meromorphic 
function interpolating the divisor $\cD$ but carrying possible extra 
poles in $\{p_{1}, \dots, p_{g}\}$ along with compensating additional 
zeros at undetermined points.  The side constraint \eqref{side} 
removes any extra poles in $\{p_{1}, \dots, p_{g}\}$ and thereby 
generates a single-valued solution of the interpolation problem (i.e.,
a solution of the First Interpolation Problem).
The linear side constraint \eqref{side}, when spelled out for this 
case, becomes
\begin{equation}   \label{BCVsol}
    \sum_{i,j=1}^{N} \left[ (\Gamma^{D_{0}}_{\cD})^{-1}\right]_{ij} 
    \frac{K(\zeta_{0}; p_{k}, \mu_{j}) K(\zeta_{0}; \mu_{j},p_{0})}
 {\frac{d}{dp}|_{p=p_{k}} \{ K(\zeta_{0}; p, p_{0})\} } = 0 \text{ 
 for } k=1, \dots, g
\end{equation}
with $\Gamma^{D_{0}}_{\cD}$ as in \eqref{scalarBaby}.

The entries of $(\Gamma_{\cD}^{D_{0}})^{-1}$ can be spelled out more 
explicitly as follows.  From the factorization \eqref{Gamma-alt} for 
$\Gamma_{\cD}^{D_{0}}$, we get the factorization for 
$(\Gamma_{\cD}^{D_{0}})^{-1}$:
\begin{equation}  \label{Gammainvfact}
  (\Gamma_{\cD}^{D_{0}})^{-1} = -\theta(\be) \cdot \bD_{\blam}^{-1} 
  \cdot \bM^{-1} \cdot \bD_{\bmu}^{-1}.
\end{equation}
We note that each minor of $\bM$, $\bM_{ij}$ (the $(N - 1) 
\times (N-1)$ submatrix of $\bM$ formed by crossing out row $i$ and 
column $j$),  has the same form as $M$; the only adjustment is that 
the set of poles is one less with $\mu_{i}$ omitted and the set of 
zeros is one less with $\lambda_{j}$ omitted.  One can therefore 
compute the entries  $C_{\alpha \beta} = (-1)^{\alpha + \beta} \det 
\bM_{\beta \alpha}$ by another application of Fay's identity; the 
result is 
\begin{align}
  C_{\alpha \beta} = & (-1)^{\alpha + \beta}\,  \theta(\sum_{j \ne 
  \beta} \phi(\mu_{j}) - \sum_{i \ne \alpha} \phi(\lambda_{i}) + \be)
 \cdot  \theta(\be)^{N-2} \notag\\
 & \quad \cdot \frac{ \prod_{i<j \colon i,j \ne \beta} E_{\Delta}(\mu_{i}, \mu_{j}) 
  \prod_{i<j \colon i,j \ne \alpha} E_{\Delta}(\lambda_{j}, \lambda_{i})}
  { \prod_{i,j \colon i \ne \alpha, j \ne \beta} E_{\Delta}(\mu_{j}, 
  \lambda_{i})}
  \label{cofac}
\end{align}
By Cramer's Rule, the entries $(M^{-1})_{\alpha \beta}$ ($1 \le 
\alpha, \beta \le N$) of $M^{-1}$ are given by 
$(\bM^{-1})_{\alpha \beta} = \frac{ C_{\alpha \beta}}{\det \bM}$.  
We then compute
\begin{align}
    & (\bM^{-1})_{\alpha \beta} = \frac{  C_{\alpha 
    \beta}}{\det \bM}  = (-1)^{\alpha + \beta} \frac{1}{\theta(\be)} \cdot 
\frac{\theta( \sum_{j \ne \beta} \phi(\mu_{j}) - \sum_{i \ne \alpha} 
\phi(\lambda_{i})  + \be)}
{\theta(\sum_{j} \phi(\mu_{j}) - \sum_{i} \phi(\lambda_{i}) + \be)} \cdot 
\notag \\
& \cdot \frac{ \prod_{i<j \colon i,j \ne \beta} 
E_{\Delta}(\mu_{i}, \mu_{j})}
 {\prod_{i<j} E_{\Delta}(\mu_{i}, \mu_{j})}  \cdot
\frac{ \prod_{i<j \colon i,j \ne \alpha} 
E_{\Delta}(\lambda_{j}, \lambda_{i})}
 {\prod_{i<j} E_{\Delta}(\lambda_{j}, \lambda_{i})}
 \cdot \frac{ \prod_{i,j} E_{\Delta}(\mu_{j}, \lambda_{i})}
{ \prod_{i,j \colon i \ne \alpha, j \ne \beta} E_{\Delta}(\mu_{j}, 
\lambda_{i})}. \notag  
\end{align}
Noting the cancellations in the prime-form terms then compactifies 
this expression to
\begin{align}
& (\bM^{-1})_{\alpha \beta} = (-1)^{\alpha + \beta} \frac{1}{\theta(\be)} \cdot 
\frac{\theta( \sum_{j \ne \beta} \phi(\mu_{j}) - \sum_{i \ne \alpha} 
\phi(\lambda_{i})  + \be)}
{\theta(\sum_{j} \phi(\mu_{j}) - \sum_{i} \phi(\lambda_{i}) + \be)}
\cdot  \notag \\ 
& \quad \cdot \frac{1}{ \prod_{i< \beta} E_{\Delta}(\mu_{i}, \mu_{\beta})
\cdot \prod_{\beta < j} E_{\Delta}(\mu_{\beta}, \mu_{j})} \cdot
\frac{1}{ \prod_{i< \alpha} E_{\Delta}(\lambda_{\alpha}, \lambda_{i})
\cdot \prod_{\alpha < j} E_{\Delta}(\lambda_{j}, \lambda_{\alpha})} 
\cdot  \notag \\
& \quad \cdot  \prod_{i} E_{\Delta}(\mu_{\beta}, \lambda_{i}) \cdot
\prod_{j} E_{\Delta}(\mu_{j}, \lambda_{\alpha}) \cdot 
\frac{1}{E(\mu_{\beta}, \lambda_{\alpha})}  \notag \\
& = \frac{1}{\theta(\be)} \cdot 
\frac{\theta( \sum_{j \ne \beta} \phi(\mu_{j}) - \sum_{i \ne \alpha} 
\phi(\lambda_{i})  + \be)}
{\theta(\sum_{j} \phi(\mu_{j}) - \sum_{i} \phi(\lambda_{i}) + \be)}
\cdot \notag \\
& \quad \cdot \frac{ \prod_{i}E_{\Delta}(\mu_{\beta}, \lambda_{i}) \cdot
\prod_{j} E_{\Delta}(\mu_{j}, \lambda_{\alpha})}
{ \prod_{j \ne \beta} E_{\Delta}(\mu_{j}, \mu_{\beta}) \cdot
\prod_{i \ne \alpha} E_{\Delta}(\lambda_{\alpha}, \lambda_{i})
\cdot E(\mu_{\beta}, \lambda_{\alpha})}
 \label{Minverse}
\end{align}
where we used the prime-form property $E_{\Delta}(x, y) = - 
E_{\Delta}(y,x)$ in the last step.
From the factorization \eqref{Gammainvfact} we get the still 
 longer formula for $(\Gamma_{\cD}^{D_{0}})^{-1}$:
\begin{align}
  &  [(\Gamma_{\cD}^{D_{0}})^{-1}]_{\alpha \beta} =  \label{Gammainverse} \\
  & -\frac{\theta(\phi(p_{0}) - \phi(\lambda_{\alpha}) + \be)}
  { E_{\Delta}(p_{0}, \lambda_{\alpha})}\cdot \frac{ 
  \theta(\sum_{j \ne \beta} 
  \phi(\mu_{j}) - \sum_{i \ne \alpha} \phi(\lambda_{i}) + \be)}
 {\theta(\sum_{j} \phi(\mu_{j}) - \sum_{i} \phi(\lambda_{i}) + \be)} 
 \cdot \notag \\
 &\cdot \frac{ \prod_{j} E_{\Delta}(\mu_{j}, \lambda_{\alpha}) 
 \cdot \prod_{i} E_{\Delta}(\mu_{\beta}, \lambda_{i})}
 {\prod_{j \ne \beta} E_{\Delta}(\mu_{j}, \mu_{\beta}) \cdot \prod_{i \ne 
 \alpha} E_{\Delta}(\lambda_{i}, \lambda_{\alpha})} \cdot 
 \frac{1}{E_{\Delta}(\mu_{\beta}, \lambda_{\alpha})} \cdot
\frac{E_{\Delta}(p_{0}, \mu_{\beta})}{ \theta(\phi(p_{0}) - \phi(\mu_{\beta}) + \be)}.
\notag
\end{align}

On the other 
hand, Abel's Theorem (see e.g.\ \cite[page 97]{FarkasKra}, 
\cite[pages 145--160]{MumfordI}, or \cite[Chapter 8]{Miranda}) tells us 
that this scalar null/pole interpolation problem has a solution 
exactly when
\begin{equation}  \label{Abel'}
    \sum_{i=1}^{N} \phi(\mu_{i}) = \sum_{j=1}^{N} \phi(\lambda_{j})
    + m + n \Omega \text{ for some } m, n \in {\mathbb Z}^{g}
\end{equation}
where $\phi$ is the Abel-Jacobi map \eqref{AbelJacobi}; indeed, 
this is just the condition to force the prime-form solution 
\eqref{primeformsol} to be single-valued. It follows 
that the side condition \eqref{BCVsol} must be equivalent to the Abel 
condition \eqref{Abel}.

Assume that the Abel condition \eqref{Abel} holds.  For purposes of 
computation, we can view all functions as being defined on the 
universal cover of $M$ and choose a  new divisor $\widetilde 
\lambda_{1} + \cdots \widetilde \lambda_{N} -\widetilde \mu_{1}, 
\dots, \widetilde \mu_{N}$ sitting above $\lambda_{1} + \cdots + 
\lambda_{n} - \mu_{1} - \cdots - \mu_{N}$ on the universal cover so 
that we have the stronger version $\sum_{i=1}^{N} \phi(\widetilde 
\mu_{i}) = \sum_{j=1}^{N} \phi(\widetilde \lambda_{j})$ of the Abel 
condition \eqref{Abel'}.  
In the sequel we shall assume that this 
normalization has been done so that we can assume the stronger form 
of the Abel condition:
\begin{equation}  \label{Abel}
    \sum_{i=1}^{N} \phi(\mu_{i}) = \sum_{j=1}^{N} \phi(\lambda_{j}).
\end{equation}
Then it is immediate from the Fay criterion 
\eqref{Faycriterion} (in fact, even without the normalization) that $\det \bM$ and hence also  
$\det \Gamma^{D_{0}}_{\cD}$ are nonzero, since in this case
$$
\theta(\sum_{i} \phi(\mu_{i}) - \sum_{j} \phi(\lambda_{j}) + \be)
= \theta( \be).
$$
Then we have two formulas for the zero-pole interpolant, namely 
\eqref{primeformsol} and \eqref{autint}.  If $U_{0}$ in 
\eqref{autint} is chosen to match the value of the first solution at 
the point $p_{0}$, these two formulas must yield the same function.
For the simple-multiplicity scalar-valued case which we are 
discussing here, the formula \eqref{autint} can be made more explicit 
as follows.  Recall that $B_{\zeta} = \begin{bmatrix} 1 & \cdots & 1 
\end{bmatrix}$,  $C_{\pi} = \sbm{ 1 \\ \vdots \\ 1}$ and the matrix 
$F^{D_{\zeta_{0}}}$ has a theta function representation by formula 
\eqref{fD0wfor}: 
$$
F^{D_{\zeta_{0}}}_{A_{\pi}}(p) = {\rm diag}_{1 \le j \le N} \left[ 
f^{D_{\zeta_{0}}}_{\mu_{j}}(p) \right].
$$
We then arrive at the conclusion: {\em  if the Abel condition \eqref{Abel} 
is satisfied, then there is a nonzero complex number $K$ so that the 
following identity for all $p \in M \setminus \{\mu_{1}, \dots, 
\mu_{N}\}$:}
\begin{equation}   \label{intid}
   1 + \sum_{i,j=1}^{N} \left[ 
    (\Gamma_{\cD}^{D_{\zeta_{0}}})^{-1}\right]_{ij} 
    f^{D_{\zeta_{0}}}_{\mu_{j}}(p)
   = K  \frac{ \prod_{j=1}^{N} E_{\Delta}(p,\lambda_{j})}
{\prod_{i=1}^{N} E_{\Delta}(p,\mu_{i})}
\end{equation}
where the value of the constant $K$ is necessarily given by
\begin{equation}   \label{intid'}
  K = \frac{ \prod_{i=1}^{N} E_{\Delta}(p_{0}, \mu_{i})}
  {\prod_{j=1}^{N} E_{\Delta}(p_{0},\lambda_{j})}
\end{equation}
in order that the two sides of this expression agree at the base 
point $p_{0}$.   We now give an independent direct computational verification that 
the identity \eqref{intid} indeed does hold under the assumption that 
the Abel condition \eqref{Abel} holds as follows.

As a first observation, we note that:  {\em to show that 
\eqref{intid}--\eqref{intid'} holds, it suffices to show the equality 
of residues
\begin{align}  
  &  {\rm res}_{p = \mu_{\beta}} \left(1 + \sum_{i,j=1}^{N} \left[ 
    (\Gamma_{\cD}^{D_{\zeta_{0}}})^{-1}\right]_{ij} 
    f^{D_{\zeta_{0}}}_{\mu_{j}}(p)\right)  \notag \\
 & \quad =
    {\rm res}_{p = \mu_{\beta}} 
\frac{ \prod_{i=1}^{N} E_{\Delta}(p_{0}, \mu_{i})}
  {\prod_{j=1}^{N} E_{\Delta}(p_{0},\lambda_{j})}
  \frac{ \prod_{j=1}^{N} E_{\Delta}(p,\lambda_{j})}
{\prod_{i=1}^{N} E_{\Delta}(p,\mu_{i})}
 \label{res=}
\end{align}
for each $\beta = 1, \dots, N$.}   Indeed, suppose that \eqref{res=} 
for all $\beta$ and set 
$$
h(p) = 1 + \sum_{i,j=1}^{N} \left[ 
    (\Gamma_{\cD}^{D_{\zeta_{0}}})^{-1}\right]_{ij} 
    f^{D_{\zeta_{0}}}_{\mu_{j}}(p) - 
\frac{ \prod_{i=1}^{N} E_{\Delta}(p_{0}, \mu_{i})}
  {\prod_{j=1}^{N} E_{\Delta}(p_{0},\lambda_{j})}
  \frac{ \prod_{j=1}^{N} E_{\Delta}(p,\lambda_{j})}
{\prod_{i=1}^{N} E_{\Delta}(p,\mu_{i})}.
$$
Then $h$ is a (single-valued) meromorphic function on $M$ which has a 
zero at $p_{0}$ and only possible poles equal to at most simple poles 
at $p_{1}, \dots, p_{g}$, i.e., $g$ is a holomorphic section of the 
bundle associated with the divisor $D - \{p_{0}\}$.  Since $D$ by 
assumption is nonspecial, it follows that $h \equiv 0$, i.e., 
\eqref{intid}--\eqref{intid'} holds.  As the right-hand side of 
\eqref{intid} has no poles at the points $p_{1}, \dots, p_{g}$, it 
follows that the apparent possible poles at $p_{1}, \dots, p_{g}$ of 
the left-hand expression are all removable, from which the side 
condition \eqref{BCVsol} follows as well.

We now assume that the strong Abel condition \eqref{Abel} holds.  Our goal is to show the 
residue equality \eqref{res=}.

We first note that the second factor in the formula 
\eqref{Gammainverse} simplifies when \eqref{Abel} holds, namely:
$$
\frac{ \theta(\sum_{j \ne \beta} \phi(\mu_{j}) - \sum_{i \ne \alpha} \phi(\lambda_{i}) + \be)}
 {\theta(\sum_{j} \phi(\mu_{j}) - \sum_{i} \phi(\lambda_{i}) + \be)} 
= \frac{\theta(\phi(\lambda_{\alpha}) - 
\phi(\mu_{\beta}) + \be)}{  \theta(\be)}.
$$
Thus the expression \eqref{Gammainverse} for $[ 
(\Gamma^{D_{0}}_{\cD})^{-1}]_{\alpha \beta}$ simplifies to
\begin{align*}
  &  [(\Gamma_{\cD}^{D_{0}})^{-1}]_{\alpha \beta}   \\
 & = - \frac{\theta(\phi(p_{0}) - \phi(\lambda_{\alpha}) + 
  \be)}{ E_{\Delta}(p_{0}, \lambda_{\alpha})} \cdot
  \frac{\theta(\phi(\lambda_{\alpha}) - \phi(\mu_{\beta}) + 
  \be)}{\theta(\be) E(\mu_{\beta}, \lambda_{\alpha})}
  \cdot \frac{E_{\Delta}(p_{0}, \mu_{\beta})}{\theta(\phi(p_{0}) - 
  \phi(\mu_{\beta}) + \be)} \cdot \\
 & \quad \quad \quad \cdot  \frac{\prod_{j} E_{\Delta}(\mu_{j}, \lambda_{\alpha}) 
\cdot \prod_{i} E_{\Delta}(\mu_{\beta}, \lambda_{i})}
{\prod_{j \ne \beta} E_{\Delta}(\mu_{j}, \mu_{\beta}) \cdot \prod_{i  
\ne \alpha} E_{\Delta}(\lambda_{i}, \lambda_{\alpha})} \\
 & = - f^{D_{0}}_{\lambda_{\alpha}}(\mu_{\beta}) \cdot 
 \frac{\prod_{j} E_{\Delta}(\mu_{j}, \lambda_{\alpha}) 
\cdot \prod_{i} E_{\Delta}(\mu_{\beta}, \lambda_{i})}
{\prod_{j \ne \beta} E_{\Delta}(\mu_{j}, \mu_{\beta}) \cdot \prod_{i 
\ne \alpha} E_{\Delta}(\lambda_{i}, \lambda_{\alpha})}.
\end{align*}
where we use the identity \eqref{fD0wfor'} as well as $E(\mu_{\beta}, 
\lambda_{\alpha}) = -E_{\Delta}(\lambda_{\alpha}, \mu_{\beta})$ for 
the last step.
  
The problem of verifying \eqref{res=} therefore comes down to 
verifying
\begin{equation}   \label{verify1}
 \sum_{\alpha = 1}^{N}  [ (\Gamma_{\cD}^{D_{0}})^{-1}]_{\alpha \beta} 
=  \frac{ \prod_{j=1}^{N} E_{\Delta}(p_{0}, \mu_{j})}
  {\prod_{i=1}^{N} E_{\Delta}(p_{0},\lambda_{i})} \cdot
  \frac{ \prod_{i}E_{\Delta}(\mu_{\beta}, \lambda_{i})}{ \prod_{j \ne \beta} 
  E_{\Delta}(\mu_{\beta}, \mu_{j})}
 \end{equation}
  for all $\beta = 1, \dots, N$, 
  Let us rewrite the right-hand side of \eqref{verify1} as
  $$
   \frac{ \prod_{j=1}^{N} E_{\Delta}(p_{0}, \mu_{j})}
  {\prod_{i=1}^{N} E_{\Delta}(p_{0},\lambda_{i})} \cdot
  \frac{ \prod_{i}E_{\Delta}(\mu_{\beta}, \lambda_{i})}{ \prod_{j \ne \beta} 
  E_{\Delta}(\mu_{\beta}, \mu_{j})} =-
  \frac{ \prod_{j=1}^{N} E_{\Delta}(\mu_{j}, p_{0})}{\prod_{i=1}^{N} 
  E_{\Delta}(p_{0}, \lambda_{i})}
  \cdot \frac{\prod_{i}E_{\Delta} (\mu_{\beta}, \lambda_{i})}
  {\prod_{j \ne \beta} E_{\Delta}(\mu_{j}, \mu_{\beta})}.
 $$
 The left-hand side of \eqref{verify1} is given by
\begin{align*}
     \sum_{\alpha = 1}^{N}  [ (\Gamma_{\cD}^{D_{0}})^{-1}]_{\alpha 
     \beta} =  \sum_{\alpha = 1}^{N} 
-f^{D_{0}}_{\lambda_{\alpha}}(\mu_{\beta}) \cdot 
 \frac{\prod_{j} E_{\Delta}(\mu_{j}, \lambda_{\alpha}) 
\cdot \prod_{i} E_{\Delta}(\mu_{\beta}, \lambda_{i})}
{\prod_{j \ne \beta} E_{\Delta}(\mu_{j}, \mu_{\beta}) \cdot 
\prod_{i \ne \alpha} E_{\Delta}(\lambda_{i}, \lambda_{\alpha})}.
\end{align*}
Cancellation of the common factor $\frac{ \prod_{i} 
E_{\Delta}(\mu_{\beta}, \lambda_{i})}{ \prod_{j \ne \beta} 
E_{\Delta}(\mu_{\beta}, \mu_{j})}$ and writing 
$f_{\lambda_{\alpha}}^{D_{0}}$ in the more explicit form
$f_{\lambda_{\alpha}}^{D - p_{0}}$ then
converts the desired identity \eqref{verify1} to 
\begin{align}
 \sum_{\alpha=1}^{N} 
 f_{\lambda_{\alpha}}^{D- p_{0}}(\mu_{\beta}) \frac{ \prod_{j} 
 E_{\Delta}(\mu_{j}, \lambda_{\alpha})}{\prod_{i \ne \alpha} 
 E_{\Delta}(\lambda_{i}, \lambda_{\alpha})} 
  = \frac{ \prod_{j} E_{\Delta}(\mu_{j}, p_{0})}
  {\prod_{i} E_{\Delta}(p_{0},\lambda_{i})}.
\label{intid2}
\end{align}
We now view $f_{\lambda_{\alpha}}^{D - p_{0}}(\mu_{\beta})$ as a function of $p_{0}$ 
for each fixed $\lambda_{\alpha}$ and $\mu_{\beta}$.  
As noted in Corollary \ref{C:pvsp0} below, when viewed as a (single-valued) meromorphic 
function on $M$ as a 
function of $p_{0}$, $f_{\lambda_{\alpha}}^{D - p_{0}}(\mu_{\beta})$ has a  simple pole 
at $\lambda_{\alpha}$ with residue equal  to $-1$ together with a 
zero at $\mu_{\beta}$, and other possible 
poles at most $g$ in number in the zero divisor of $K(\zeta_{0}; 
\mu_{\beta}, \cdot)$ on $M$.  For generic choice of $\mu_{\beta}$, this zero divisor is 
nonspecial.  Hence we can argue just as in the proof of the reduction of 
\eqref{intid}--\eqref{intid'} to \eqref{res=} above that to show 
\eqref{intid2}, it suffices to verify the equality of residues at 
$p_{0} = \lambda_{\alpha}$ for the two sides of \eqref{intid2} for 
$\alpha = 1, \dots, N$.
Again making use of the 
general identity $E_{\Delta}(x,y) = - E_{\Delta}(y,x)$ as well as the 
local development \eqref{E-local} for the prime form, one can  
check that the residues of the left and right hand sides at the simple pole 
$\lambda_{\alpha}$ have the common value
$$
-\frac{\prod_{j} E_{\Delta}(\mu_{j}, 
\lambda_{\alpha})}{ \prod_{i \ne \alpha} E_{\Delta}(\lambda_{i}, 
\lambda_{\alpha})}.
$$
for each $\alpha = 1, \dots, N$.  Putting all the pieces together, we 
see that we have now verified the identity \eqref{intid}, providing an 
independent proof of the formula \eqref{autint} for the solution of 
the First Interpolation Problem for this simplest case.

A similar phenomenon occurs already in the genus $g=0$ case. 
Given specified distinct simple-multiplicity poles $\{\mu_{1}, \dots, \mu_{N}\}$
and distinct simple-multiplicity zeros $\{\lambda_{1}, \dots, 
\lambda_{N}\}$ in the complex plane ${\mathbb C}$, it is completely 
elementary to write down the associated rational zero-pole interpolant
\begin{equation}   \label{sol1}
f(z) = \frac{ \prod_{i=1}^{N} (z - \lambda_{i})}{ \prod_{j=1}^{N} (z - 
\mu_{j})},
\end{equation}
the genus-0 analogue of the prime-form solution of the problem given 
in \cite{MumfordI}.
On the other hand, one can solve the same problem in realization or 
partial fraction form
\begin{equation}  \label{sol2}
  f(z) = 1 + \begin{bmatrix} 1 & \cdots & 1 \end{bmatrix}
  \begin{bmatrix} (z - \mu_{1})^{-1} & & \\ & \ddots & \\ & & (z - 
      \mu_{N})^{-1} \end{bmatrix} S^{-1} \begin{bmatrix} 1 \\ \vdots 
      \\ 1 \end{bmatrix}
\end{equation}
where $S =- \left[ \frac{1}{ \mu_{j} - \lambda_{i} } \right]_{i,j = 
1, \dots, N}$ is the {\em Cauchy matrix} (see \cite[Theorem 
4.3.2]{BGR}), the genus-0 analogue of the formula \eqref{autint}.  Note that in the 
genus-0 case, there is no analogue of the side constraint \eqref{BCVsol}.
The fact that \eqref{sol1} and \eqref{sol2} agree can be seen via 
explicit inversion of the Cauchy matrix (see \cite{Schechter} as well 
as \cite[Lemma A.1.5]{BGR}).
}\end{remark}

\subsection{Appendix: Explicit formulas for building-block functions $f^{D_{0}}_{kw}$}
\label{S:explicit}  
 For completeness we now review here results on theta functions and the prime 
 form; for more complete details we refer to \cite{AlpayVinnikov, 
 Fay, FarkasKra, MumfordII}
 
We assume that we are given a compact Riemann surface $M$.  We let 
$\Delta$ be a line bundle of half-order differentials, i.e., $\Delta 
\otimes \Delta \cong K_{M}$ where $K_{M}$ is the canonical line bundle 
with local holomorphic sections equal to holomorphic differentials  
on $M$.  Assume also that we are given a holomorphic complex 
bundle $E$ of rank $r$ and degree $0$ over $M$ such that $h^{0}(E 
\otimes \Delta) = 0$.  As necessarily ${\rm deg}\, \Delta = g-1$, it 
follows from the discussion in Section \ref{S:4.2} that $E$ has 
property NSF (with test line bundle $\lambda$ taken to be equal to 
$\Delta$) and so in particular $E$ is flat with flat factor of 
automorphy denoted by $\chi$.  We let $\pi_{1} \colon M \times M \to 
M$ be the projection map onto the first component and $\pi_{2} \colon M 
\times M \to M$ be the projection map onto the second component.  The 
defining property for the {\em Cauchy kernel} $K(\chi, \cdot, \cdot)$
is that $K(\chi; \cdot, \cdot)$ be a meromorphic mapping between the vector 
bundle $\pi_{2}^{*}E$ and $\pi_{1}^{*}E \otimes \pi_{1}^{*}\Delta \otimes \pi_{2}^{*} \Delta$ 
on $M \times M$ which is holomorphic outside of the diagonal 
${\mathfrak D} = \{(p,p) \in M \times M \colon p \in M \}$, and that 
the singularity of $K(\chi; \cdot, \cdot)$ on the diagonal  be a 
simple pole with residue equal to the identity $I_{r}$.  It is 
straightforward to show by making use of the assumption that $h^{0}(E 
\otimes \Delta) = 0$ that such a Cauchy kernel is unique if it exists.
The main result of Section 2 of \cite{BV} is that the Cauchy kernel 
indeed does exists (se also \cite{HIP} for an alternative derivation  
in terms of a representation of the Riemann surface as the 
normalizing Riemann surface for an algebraic curve ${\mathbf C}$ embedded in 
projective space ${\mathbb P}^{2}$ with the representation of the bundle 
$E$ as a kernel bundle (up to a twist) associated with a determinantal representation 
for the defining polynomial of the curve ${\mathbf C}$).

Let us next restrict to the case where $E$ is taken to be a line 
bundle. Without loss of generality, we may assume 
that $\chi$ is a flat unitary line bundle (see \cite[page 4]{Fay}).    
Then in this case there is an explicit formula for $K(\chi; 
\cdot, \cdot)$ in terms of theta functions which we now describe.  
We first need  to recall some Riemann-surface function theory.  

We mark $M$ by fixing a canonical $A_{1}, \dots, A_{g}, B_{1}, \dots, B_{g}$
basis for the the homology of $M$.  We  then also fix a normalized basis for holomorphic 
differentials on $M$, 
where normalized means that $\int_{A_{j}} \omega_{i} = \delta_{ij}$.
Then the $B$-period matrix $\Omega$ for $M$ is defined as the matrix 
with $j$-th column equal to $\sbm{ \int_{B_{j}} \omega_{1} \\ \vdots 
\\ \int_{B_{j}} \omega_{g}}$.  The matrix $\Omega$ has positive 
imaginary part and the Jacobian variety of $M$ is 
defined to be the quotient space $J(M) = {\mathbb C}^{g}/ ({\mathbb 
Z}^{g} + \Omega {\mathbb Z}^{g})$.  We fix a base point $p_{0} \in 
M$ and then define the Abel-Jacobi map $\phi \colon M \to J(M)$ by
\begin{equation}   \label{AbelJacobi}
\phi(p) = \left( \int_{p_{0}}^{p} \omega_{1}, \dots, 
  \int_{p_{0}}^{p} \omega_{g} \right).
\end{equation}
We write $\theta(z)$ for the theta function associated with the 
lattice ${\mathbb Z}^{g} + \Omega {\mathbb Z}^{g}$ where $\Omega$ is 
the period matrix of $M$ (the multivariable version of the classical 
theta function already introduced above in \eqref{theta}), namely
$$
\theta(z) = \sum_{m \in {\mathbb Z}^{g}} e^{ \pi i\langle\Omega 
m,m\rangle + 2 \pi i \langle z, m \rangle}.
$$

We will also need the theta function with characteristic, defined for 
$a, b \in ({\mathbb R}/{\mathbb Z})^{g}$ as:
\begin{equation}  \label{thetachar1}
\theta \sbm {a \\ b } (z) =
\sum_{m \in {\mathbb Z}^{g}} e^{\pi i \langle \Omega(m+a), m+a \rangle} 
e^{2 \pi i \langle z+b, m+a \rangle}.
\end{equation}
which can be expressed directly in terms of $\theta$ as
\begin{equation}   \label{thetachar2}
 \theta \sbm{a \\ b }(z) =
 \exp( \pi i \langle \Omega a, a \rangle + 2 \pi i \langle z + b, a 
 \rangle) \theta(z +  \Omega a + b).
 \end{equation}
 In particular, if we set $a=b= 0$, then $\theta\sbm{0 \\ 0}(z) = 
 \theta(z)$.
A direct verification shows that that $\theta \sbm{a \\ b}$ has the 
quasi-periodicity property with respect to the period lattice 
${\mathbb Z}^{g} + \Omega {\mathbb Z}^{g}$ given by
\begin{equation}  \label{theta-quasiper}
    \theta \sbm{a \\ b}(z + m + \Omega n) =
    \exp(2 \pi i \langle  m, a \rangle -2 \pi i\langle  n, b \rangle -
    \pi i \langle \Omega n, n\rangle -  2 \pi i \langle z, n \rangle) 
    \theta \sbm{a \\ b} (z).
\end{equation}

A closely related gadget is a variant of the prime form (defined 
below) given by
$$
    E_{\be}(p,q) = \theta(\be + \phi(p) - \phi(q))
$$
where $\be \in {\mathbb C}^{g}$, $p,q \in M$ and $\theta(\be) = 0$.  By 
following the construction in \cite[pages 158--160]{MumfordI}, one can show 
that, given a divisor 
\begin{equation}  \label{prescribedD}  
 D = \lambda_{1} + \cdots + \lambda_{n} - \mu_{1} - \cdots - \mu_{n} 
\end{equation}
of degree zero, for an appropriate choice of $\be$ with $\theta(\be) 
= 0$, the function $f$ given by 
$$
 f(p) = \frac{ \prod_{i=1}^{n} E_{\be}(p, \lambda_{i})}{\prod_{j=1}^{n} 
 E_{\be}(p, \mu_{i})}
$$
is relatively automorphic on $\widehat M$ with divisor $\widehat D$ 
having projection down to $M$ equivalent to the prescribed divsor $D$ (4.64) 
and has flat factor of automorphy holomorphically equivalent to the 
factor of automorphy $\chi$ whose action on the canonical basis for the homology of $M$ 
is given by
\begin{align*}
& \chi(A_{\ell}) = e^{- 2 \pi i a_{\ell}}, \quad \chi(B_{\ell}) = e^{ 2 
 \pi i b_{\ell}} \text{ for } \ell = 1, \dots, g \text{ where } \phi(D) = \Omega a +  b, \\
 & a =(a_{1}, \dots, a_{g}) \text{ and } b = (b_{1}, \dots, b_{g}) \in 
 {\mathbb R}^{g}/{\mathbb Z}^{g}.
\end{align*}
In this way we may associate a flat factor of automorphy and 
associated line bundle $L_{\ba}$ with a point $\ba = \Omega a + b $
(modulo the period lattice ${\mathbb Z}^{g} + \Omega {\mathbb Z}^{g}$)
in the Jacobian $J(M)$ of $M$.

An important special case of theta function with characteristic 
\eqref{thetachar1} is the situation 
where the characteristic components $a,b$ are taken to be in $(\frac{1}{2} 
{\mathbb Z}/{\mathbb Z})^{g}$
(half-order characteristic):  for the case where say $a_{*},b_{*} \in
(\frac{1}{2} {\mathbb Z}/{\mathbb Z})^{g}$, the associated theta 
function with half-order characteristic $\theta\sbm{a_{*}\\ b_{*}}$ 
has the property that its zero set is invariant under the symmetry $z 
\mapsto -z$:
$$ \theta\sbm{a_{*}\\ b_{*}}(z) = 0 \Leftrightarrow 
\theta \sbm{a_{*}\\ b_{*}}(-z) = 0.
$$
This happens in exactly two possible ways:  either $\theta \sbm{a_{*} 
\\ b_{*}}$ is even 
($\theta \sbm{a_{*}\\ b_{*}}(-z)$
$= \theta \sbm{a_{*}\\ b_{*}}(z)$) 
or $\theta \sbm{a_{*}\\ b_{*}}$ is odd
($\theta \sbm{a_{*}\\ b_{*}}(-z)$ $= - \theta \sbm{a_{*}\\ 
b_{*}}(z)$).  In the first case $(a_{*},b_{*})$ is said to be an even 
half-order characteristic and in the second case $(a_{*},b_{*})$ is 
said to be an odd half-order characteristic.  An important result is 
that nonsingular odd half-order characteristics always exist, i.e., 
an odd half-order characteristic $(a_{*},b_{*})$ for which in addition
the differential $d \theta\sbm{a_{*} \\ b_{*}}(0)$  is not zero
(see Lemma 1 page 3.208 of \cite{MumfordII}).

Let us now fix a choice $(a_{*},b_{*})$ of nonsingular odd half-order 
characteristic. Since the half-order characteristic $(a_{*},b_{*})$ 
is nonsingular, Riemann's zero theorem (see e.g.\ \cite[page 
149]{MumfordI} or \cite[pages 308--309]{FarkasKra}) implies that 
$\theta\sbm{a_{*}\\ b_{*}}(\phi(q) - \phi(\cdot))$ has precisely $g$ 
zeros $p_{1}, \dots, p_{g}$ which are uniquely determined by  the 
equality
$$
\phi(p_{1}) + \cdots + \phi(p_{g}) =  - \kappa_{0} + \phi(q) + \Omega a_{*} 
+ b_{*}
$$
where $\kappa_{0}$ is Riemann's constant.
Since $(a_{*},b_{*})$ is odd, one of these zeros is $q$. Hence
without loss of generality we may take $p_{g} = q$ and we are left with
$$
\phi(p_{1}) + \cdots + \phi(p_{g-1}) =  - \kappa_{0} + \Omega a_{*}  + 
b_{*}.
$$
As $ 2 (\Omega a_{*} + b_{*}) = 0$ in $J(M)$, 
multiplying this last expression by 2 converts it to
\begin{equation}   \label{2D=0}
2 (\phi(p_{1}) + \cdots +  \phi(p_{g-1}) + \ba_{*}) = - 2 \kappa_{0}
\end{equation}
where we set $\ba_{*} = \Omega a_{*} + b_{*}$.
It is known that a divisor $K$ is the divisor of a meromorphic 
differential on $M$ if and only if $\phi(K) = -2 \kappa$
(see \cite[page 318]{FarkasKra}). 
We let $\Delta_{*}$ be the line bundle associated with the divisor 
$p_{1} + \cdots + p_{g-1}$.  We now conclude from the equality 
\eqref{2D=0} that  
$$
 ( L_{\ba_{*}} \otimes \Delta_{*} ) \otimes (L_{\ba_{*}} \otimes 
 \Delta_{*}) = K_{M}
$$
where $K_{M}$ is the line bundle associated with the canonical divisor 
on $M$. Thus the bundle $\Delta : = L_{\ba_{*}} \otimes \Delta_{*}$ is 
a bundle of half-order differentials; we shall henceforth assume that 
{\em the bundle of half-order differentials in the definition of the 
Cauchy kernel arises in this way from a half-order characteristic 
$(a_{*}, b_{*})$.}
The fact that $d\theta \sbm{a_{*} \\ b_{*}}(0)$ is nonzero 
also implies that the bundle $\Delta$ has only one nonzero 
holomorphic section (up to a multiplicative constant) (see 
\cite[page  293]{AlpayVinnikov}).  We let 
$\sqrt{\xi_{*}(p)}$ denote the choice such that
$$
  \left( \sqrt{\xi_{*}(p)} \right)^{2} = \sum_{j=1}^{g} 
  \frac{\partial \theta \sbm{a_{*} \\ b_{*}}}{\partial z_{j}}(0) 
  \omega_{j}(p).
$$
We now define the
{\em prime form} $E_{\Delta}(p,q)$ by
$$
     E_{\Delta}(p,q) = \frac{\theta \sbm{a_{*} \\ b_{*}}( \phi(q) - \phi(p))}
     {\sqrt{\xi_{*}(p)} \sqrt{\xi_{*}(q)}}.
$$
If we use $\xi_{*}(p) = \sum_{j=1}^{g} \frac{\partial 
\theta\sbm{a_{*}\\ b_{*}}}{\partial z_{j}}(0) \omega_{j}(p)$ as a local coordinate $t = 
t(p)$, then $E_{\Delta}$ has a local representation at 
points $(p,q)$ near $p = q$ given by
\begin{equation}   \label{E-local}
    E_{\Delta}(p,q) = \frac{t(q) - t(p)}{\sqrt{dt(p)} \sqrt{dt(q)}}  
    (1 + O(t(p) - t(q))^{2} )
\end{equation}
(see formula (26) page 19 of \cite{Fay}, a corrected version of 
Property 4 page 3.210 of \cite{MumfordII}).

Let now $\chi$ be a general rank-1 unitary factor of automorphy, with
action  on the 
generators $A_{1}, \dots, A_{g}, B_{1}, \dots, B_{g}$ for the 
homology basis for $M$ given by
\begin{equation} \label{A-a-ell}
\chi(A_{\ell}) = \exp (-2 \pi i a_{\ell}), \quad
\chi(B_{\ell}) = \exp(2 \pi i b_{\ell})
\end{equation}
for $\ell = 1, \dots, g$ where   $a,b \in ({\mathbb 
R}/{\mathbb Z})^{g}$.
It then can be checked that the Cauchy kernel $K(\chi; \cdot, \cdot)$  
is given explicitly by 
\begin{equation}  \label{Cauchykernelform}
    K(\chi; p,q) = \frac{ \theta \sbm{ a \\ b }
    (\phi(q) - \phi(p))}{ \theta \sbm{a \\ b}(0) E_{\Delta}(q,p)}
\end{equation}
(see \cite{HIP, BV}).

By using the formula \eqref{thetachar2} 
for $\theta \sbm{a \\ b}$,  we can express $K(\chi; p,q)$ in terms of 
the theta function $\theta$ itself:
\begin{equation}  \label{Cauchykernelform2}
    K(\chi; p,q) =  \frac{\exp(2 \pi i a^{\top} (\phi(q) - \phi(p)) 
  \cdot  \theta(\phi(q) - \phi(p) + \be)}{\theta(\be) \cdot E_{\Delta}(q,p)}
\end{equation} 
where we use the notation
\begin{equation} \label{be} 
 \be =  b + \Omega a.
\end{equation}

As the following result shows, the Cauchy kernel and the building-block functions $f^{D_{0}}_{k w}$  
used to build the matrices of the form $f^{D_{0}}_{w, A}$ \eqref{fD0wA} are closely related.

\begin{thm}  \label{T:fw-form}  Suppose that $D_{0} = p_{1} + \cdots +
    p_{g} - p_{0}$ is the divisor on $M$ where the divisor $D= p_{1} + 
    \cdots + p_{g}$ is nonspecial.  Write the associated line bundle $\lambda_{D_{0}}$ 
    in the form
    \begin{equation}   \label{bundleid}
    \lambda_{D_{0}} = \zeta_{0} \otimes \Delta
    \end{equation}
    for a degree-0 necessarily flat line bundle $\zeta_{0}$, where 
    $\Delta$ is the bundle of half-order differentials used in the 
    prime form $E_{\Delta}(\cdot, \cdot)$.  Then 
      the canonical function 
    $f_{w}^{D_{0}}(p)$ associated with $D_{0}$ as introduced in Section \ref{S:4.2} 
    is given  by
    \begin{equation}   \label{fD0wfor}
	f^{D_{0}}_{w}(p) = \frac{K(\zeta_{0}; p,w)}{K(\zeta_{0}; p, p_{0})} 
	K(\zeta_{0}; w, p_{0})
    \end{equation}
 or in terms of theta functions,
  \begin{equation}   \label{fD0wfor'}
	f^{D_{0}}_{w}(p) = \frac{1}{\theta(\be)} \frac{\theta(\phi(w) 
	- \phi(p) + \be)}{E_{\Delta}(w,p)} 
	\frac{ E_{\Delta}(p_{0},p)}{\theta(\phi(p_{0}) - \phi(p) + 
	\be)}  \frac{\theta(\phi(p_{0}) - \phi(w) + 
	\be)}{E_{\Delta}(p_{0},w)}.
\end{equation}
 Furthermore, for $k > 1$, the function $f^{D_{0}}_{k w}(p)$ is 
    given by
    \begin{equation} \label{fD0kwfor}
	f^{D_{0}}_{kw}(p) = \frac{1}{(k-1)!}\frac{d^{(k-1)}}{dw^{(k-1)}} 
	f^{D_{0}}_{w}(p).
    \end{equation}
   \end{thm}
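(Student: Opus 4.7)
The plan is to prove \eqref{fD0wfor} first by direct verification that the right-hand side has the defining properties of $f^{D_{0}}_{w}$, then to obtain \eqref{fD0wfor'} by substitution and \eqref{fD0kwfor} by differentiation in $w$. Write $F(p)$ for the right-hand side of \eqref{fD0wfor}. Since $K(\zeta_{0};\cdot,w)$ and $K(\zeta_{0};\cdot,p_{0})$ are, as functions of $p$, meromorphic sections of the \emph{same} line bundle $\zeta_{0}\otimes\Delta = \lambda_{D_{0}}$, their ratio is a genuine single-valued meromorphic function on $M$, and the factor $K(\zeta_{0};w,p_{0})$ is a constant in $p$. Since $l(D_{0})=0$ by nonspeciality of $D$, any meromorphic function on $M$ is uniquely determined by having divisor $\ge -D_{0}-w$ together with prescribed principal part $z(u)^{-1}$ at $w$; it thus suffices to verify these two conditions for $F$.

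The key step is identifying the zero divisor of $K(\zeta_{0};\cdot,p_{0})$ as a section of $\lambda_{D_{0}}$. Since $\lambda_{D_{0}}$ has degree $g-1$ and this section has a single simple pole at $p_{0}$ from the diagonal normalization of the Cauchy kernel, its zero divisor $E$ is effective of degree $g$, and $E-p_{0}\sim D_{0}$ forces $E\sim D$ where $D=p_{1}+\cdots+p_{g}$. Nonspeciality of $D$ combined with Riemann-Roch gives $l(D)=1$, so the complete linear system $|D|$ is the single point $D$, whence $E=D$. Thus the denominator of $F$ has simple zeros precisely at $p_{1},\ldots,p_{g}$, producing the required simple poles of $F$ there, while the simple pole of the denominator at $p_{0}$ becomes the required simple zero of $F$ at $p_{0}$ and the numerator supplies the simple pole at $w$. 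The principal-part check at $p=w$ is then a direct computation using the local form \eqref{E-local} of the prime form inside \eqref{Cauchykernelform2}, with symmetric cancellation of finite non-diagonal factors yielding $F(p)\sim (t(p)-t(w))^{-1}$ in the natural local coordinate $t$ attached to $\sqrt{\xi_{*}}$.

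For \eqref{fD0wfor'}, substituting \eqref{Cauchykernelform2} for each of the three Cauchy kernels on the right-hand side of \eqref{fD0wfor} produces three exponential prefactors with arguments $2\pi i\, a^{\top}(\phi(w)-\phi(p))$, $-2\pi i\, a^{\top}(\phi(p_{0})-\phi(p))$, and $2\pi i\, a^{\top}(\phi(p_{0})-\phi(w))$; these sum to zero, the exponentials cancel, and what remains is exactly the theta/prime-form expression \eqref{fD0wfor'}. For \eqref{fD0kwfor}, I treat $w$ as a complex parameter via the local coordinate $z$ centered at the reference point, differentiate $f^{D_{0}}_{w}(p)$ formally $(k-1)$ times with respect to $z(w)$, and divide by $(k-1)!$. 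The identity
\[
\frac{1}{(k-1)!}\frac{d^{k-1}}{d(z(w))^{k-1}}\frac{1}{z(p)-z(w)}=\frac{1}{(z(p)-z(w))^{k}}
\]
converts the prescribed principal part from $z(u)^{-1}$ to $z(u)^{-k}$, while the other singular points $p_{0},p_{1},\ldots,p_{g}$ do not depend on $w$, so $w$-differentiation can alter leading coefficients but cannot increase pole orders or create new singularities. Uniqueness of $f^{D_{0}}_{kw}$ then identifies the resulting function with it.

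The main technical obstacle is the local bookkeeping at $p=w$ in the principal-part verification: the Cauchy kernel naturally carries half-differential factors $\sqrt{dt(p)}\sqrt{dt(w)}$ from $E_{\Delta}(w,p)$, and one must track how these combine through the ratio and through the constant $K(\zeta_{0};w,p_{0})$ so as to produce a scalar principal part $(t(p)-t(w))^{-1}$ in the coordinate implicitly fixed by the normalization of the Cauchy kernel, rather than an expression still carrying half-differential weights.
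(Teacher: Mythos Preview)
Your proposal is correct and follows essentially the same strategy as the paper: define the right-hand side, check that it is single-valued meromorphic on $M$ with divisor $\ge -D_{0}-w$ and principal part $z^{-1}$ at $w$, then invoke uniqueness from $l(D_{0})=0$; the theta-function form follows by substitution and the $k>1$ case by differentiation in $w$. Your argument is in fact more complete in two places: you justify the identification of the zero divisor of $K(\zeta_{0};\cdot,p_{0})$ with $D$ cleanly via $E\sim D$ and $l(D)=1$ (the paper appeals somewhat elliptically to an ``arrangement'' tied to Riemann's zero theorem), and you actually supply the differentiation argument for \eqref{fD0kwfor}, which the paper dismisses with ``checked similarly.'' The half-differential bookkeeping you flag as the main obstacle is real but harmless: the $\sqrt{dt}$ factors from the three prime forms combine to a net factor of $dt(w)$, which is exactly the coordinate-dependence already built into the definition of $f^{D_{0}}_{w}$ through the normalization of its principal part.
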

   
   \begin{proof}  Let us set the right hand side of \eqref{fD0wfor} 
       equal to $\widetilde f_{w}^{D_{0}}(p)$. It is easily seen that the right-hand side of 
       \eqref{fD0wfor} is automorphic as a function of $p$ and has a 
       pole at $w$ with residue 1. Use $p_{0}$ as the base point for 
       the Abel-Jacobi map $\phi$.  Note that the zero divisor 
       $D^{p_{0}}$ is equal to the zero divisor of $\theta\sbm{a \\ 
       b}\circ( \phi(\cdot))$ which is also arranged to be the same 
       as the divisor $D$.  Thus $D^{p_{0}} = D$.
      By definition we also know that $K(\zeta_{0}; 
       \cdot, p_{0})$ has a pole at $p_{0}$.  From the formula 
       \eqref{fD0wfor} we conclude that $(\widetilde f_{w}^{D_{0}}) + w + D \ge 
       0$.  Note also that $\widetilde f_{w}^{D_{0}}$ is normalized 
       so that the pole at $w$ has residue equal to 1.  By the 
       uniqueness of the solution of the properties defining 
       $f_{w}^{D_{0}}$ we conclude that $f_{w}^{D_{0}} = \widetilde 
       f_{w}^{D_{0}}$.  Substitution of the  formula 
       \eqref{Cauchykernelform2} for the Cauchy kernel (three times) 
       converts the formula \eqref{fD0wfor} to \eqref{fD0wfor'}.
           
 The formula for the higher multiplicity case is checked 
       similarly.
    \end{proof}
    
    The following corollaries  are of interest for us in the body of the paper.
    
\begin{cor}  \label{C:pvsp0}
    When the building-block function $f^{D_{0}}_{w}(p) = f^{D - 
    p_{0}}_{w}(p)$ is considered as a function of $p_{0}$ for each 
    fixed $w$ and $p$, then $f^{D_{0}}_{w}(p) = f^{D -p_{0}}_{w}(p)$
    is a single-valued meromorphic function on $M$ with only pole a 
    simple pole at $w$ having residue there given by
 \begin{equation}   \label{residue}
     {\rm res}_{p_{0} = w} f^{D -p_{0}}_{w}(p) = -1.
   \end{equation}
 \end{cor}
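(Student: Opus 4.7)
The plan is to derive the result from the explicit formula \eqref{fD0wfor'} by tracking the $p_0$-dependence. The key observation is that $\be$ depends on $p_0$: since the flat bundle $\zeta_0$ is characterized by $\zeta_0 \otimes \Delta \cong \lambda_{D-p_0}$, the corresponding Jacobian point $\be$ (together with $\phi(p_0)$) sums to a constant $\be_0$ independent of $p_0$ that represents the bundle $\lambda_D \otimes \Delta^{-1}$; that is, $\be + \phi(p_0) = \be_0$ modulo periods. Substituting $\be = \be_0 - \phi(p_0)$ into \eqref{fD0wfor'} yields
\[
f^{D-p_0}_w(p) = \frac{\theta(\be_0 - \phi(w))}{E_\Delta(w,p)\,\theta(\be_0 - \phi(p))} \cdot \frac{E_\Delta(p_0, p)\,\theta(\phi(w) - \phi(p) + \be_0 - \phi(p_0))}{E_\Delta(p_0, w)\,\theta(\be_0 - \phi(p_0))},
\]
in which the first factor is independent of $p_0$ and collects all $(w,p)$-dependence.

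First I would verify single-valuedness of the right-hand side as a function of $p_0 \in M$. Under an $A$-cycle shift $\phi(p_0) \mapsto \phi(p_0) + e_k$, the theta and prime-form factors are all invariant. Under a $B$-cycle shift $\phi(p_0) \mapsto \phi(p_0) + \Omega e_k$, the quasi-periodicity \eqref{theta-quasiper} makes the theta ratio pick up the factor $e^{2\pi i(\phi(w) - \phi(p))_k}$, while the corresponding transformation of the prime form (cf.\ e.g.\ \cite{Fay}) makes the ratio $E_\Delta(p_0, p)/E_\Delta(p_0, w)$ pick up the inverse factor $e^{2\pi i(\phi(p) - \phi(w))_k}$; these cancel. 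Next I identify the pole: $E_\Delta(p_0, w)$ has a simple zero at $p_0 = w$ by \eqref{E-local}, while $\theta(\be_0 - \phi(p_0))$ is nonzero at $p_0 = w$ since its zero divisor on $M$ is precisely the nonspecial divisor $D = p_1 + \cdots + p_g$ by Riemann vanishing, which is disjoint from $w$. The numerator factors remain regular at $p_0 = w$, so $F(p_0) := f^{D-p_0}_w(p)$ has a simple pole at $p_0 = w$.

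For the residue, take local coordinate $t$ at $w$ coming from $\xi_*$, which is also the coordinate used to normalize the principal part of $f^{D-p_0}_w(p)$ at $w$. Using \eqref{E-local} with $t(w) = 0$,
\[
E_\Delta(p_0, w) = -\frac{t(p_0)}{\sqrt{dt(p_0)}\sqrt{dt(w)}}\bigl(1 + O(t(p_0)^2)\bigr),
\]
so that combining the $p_0 \to w$ limits of the regular factors, the three $\theta$-ratios telescope to $1$, and the half-order differential factors $\sqrt{dt}$ in the prime forms cancel precisely among the factor $E_\Delta(p_0, p)/E_\Delta(p_0, w)$, the factor $1/E_\Delta(w,p)$, and the local expansion above. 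The leading behavior is $-1/t(p_0)$, hence $\text{res}_{p_0 = w} f^{D-p_0}_w(p) = -1$. The delicate step will be the bookkeeping of the $\sqrt{dt}$ factors in the prime form; it is exactly the use of the same local coordinate $t$ at $w$ both for the normalization of $f^{D-p_0}_w$ in the variable $p$ and for the residue of $F$ in the variable $p_0$ that forces the residue to come out as the clean integer $-1$.
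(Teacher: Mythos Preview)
Your proof is correct and follows essentially the same route as the paper's: both read off the $p_{0}$-behavior directly from the explicit theta-function formula \eqref{fD0wfor'}, and your substitution $\be = \be_{0} - \phi(p_{0})$ (with $\be_{0}$ the Jacobian point of $\lambda_{D}\otimes\Delta^{-1}$) is exactly the step needed to make that dependence transparent. The paper's argument is terser and, rather than carrying out the local expansion you do, records the conceptual identity $f_{w}^{D-p_{0}}(p) = - f_{w}^{D'-p}(p_{0})$ (with $D'$ the zero divisor of $K(\zeta_{0};p,\cdot)$), which exhibits the $p_{0}$-dependence as another building-block function and makes the residue $-1$ immediate from the known residue $+1$ in the $p$-variable; your direct computation and this symmetry observation are two sides of the same coin.
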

   
   \begin{proof}  Use either of formulas \eqref{fD0wfor}, 
       \eqref{fD0wfor'}
       to see that $f_{w}^{D - p_{0}}(p)$ as a function of 
       $p$ has a pole of  residue 1 at $p = w$ while as a 
       function of $p_{0}$ there is a pole at $p_{0} = w$ of residue 
       $-1$.  In fact one can make the identification
       $$ 
       f_{w}^{D - p_{0}}(p) = - f_{w}^{D' - p}(p_{0}).
       $$
      where $D$ is the zero divisor of $K(\zeta_{0}; \cdot, p_{0})$
      (or equivalently of $\theta\sbm{a \\ b}(\phi(p_{0}) - \phi(\cdot) 
      + \be)$)
       while $D'$ is the zero divisor of $K(\zeta_{0}; p, \cdot)$
       (or equivalently of $\theta \sbm{a \\ b}(\phi(\cdot) - \phi(p) 
       + \be)$).
  \end{proof}

\begin{cor}  \label{C:contdep}
    The matrix $\Gamma^{D_{0}}_{\cD}$ depends continuously on the 
    support of the divisor $D$.
\end{cor}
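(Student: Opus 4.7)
My plan is to exploit the explicit theta-function formulas from Theorem \ref{T:fw-form} to reduce continuous dependence of $\Gamma^{D_{0}}_{\cD}$ on $D$ to standard continuity of holomorphic expressions. The crucial observation is that in the definition \eqref{gamma}--\eqref{gamma-b}, the local Sylvester data $\Upsilon_{u_{i}}$ and the base point $p_{0}$ are held fixed; only $D_{0} = D - p_{0}$ varies. Thus it suffices to establish that the building-block functions $f^{D_{0}}_{kw}$ depend continuously on the support of $D$, uniformly in a fixed small neighborhood of each interpolation point $u_{j}$.

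By formula \eqref{fD0wfor'}, $f^{D_{0}}_{w}(p)$ depends on $D$ only through the parameter $\be \in \mathbb{C}^{g}$, determined modulo the period lattice by the identification $\lambda_{D_{0}} = \zeta_{0} \otimes \Delta$ in \eqref{bundleid}; explicitly, $\be$ is the image of $\phi(D_{0}) - \phi(\mathrm{div}\,\Delta)$ in the Jacobian $J(M)$. Since the Abel-Jacobi map $M^{(g)} \to J(M)$ is continuous and both $\theta$ and $E_{\Delta}$ are holomorphic in their arguments, formula \eqref{fD0wfor'} shows that $f^{D_{0}}_{w}(p)$ depends continuously on the support of $D$, uniformly on compact subsets of $M \times M$ that avoid the pole locus. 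By \eqref{fD0kwfor} the same holds for the higher multiplicity functions $f^{D_{0}}_{kw}$, since $w$-differentiation commutes with variation of $D$. For small enough perturbations of $D$ the points $p_{1}, \dots, p_{g}$ remain disjoint from each fixed $u_{j}$, so we can choose a common small neighborhood $V_{j}$ of $u_{j}$ on which the relevant meromorphic expressions depend continuously on $D$ in the sup norm.

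The block entries of $\Gamma^{D_{0}}_{\cD}$ are residues at $u_{j}$ of meromorphic expressions built from $f^{D_{0}}_{u_{i},A_{\pi_{u_{i}}}}$ (a matrix-valued assembly of $f^{D_{0}}_{kw}$'s given by \eqref{fD0wA}) multiplied by fixed rational matrix functions coming from $(B_{\zeta_{u_{j}}}, A_{\zeta_{u_{j}}})$. Since residues at a fixed point are continuous linear functionals on the space of meromorphic germs of bounded pole order (with the topology of uniform convergence on a small punctured disk), continuity of the integrand in $D$ transfers to continuity of each block entry, completing the proof.

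The only delicate point will be the diagonal case \eqref{gamma-b}, where the local model $(z_{j}(u)I - A_{\pi_{u_{j}}})^{-1}$ is subtracted from $f^{D_{0}}_{u_{j}, A_{\pi_{u_{j}}}}(u) C_{\pi_{u_{j}}}$ before taking the residue. However, the construction of $f^{D_{0}}_{w,A}$ in \eqref{fD0wA} is normalized so that the principal part at $u_{j}$ matches $(z_{j}(u)I - A_{\pi_{u_{j}}})^{-1} C_{\pi_{u_{j}}}$ independently of $D_{0}$, so the subtraction yields a function holomorphic at $u_{j}$ whose Taylor coefficients at $u_{j}$ vary continuously with $\be$. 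This normalization-compatibility is what keeps the cancellation robust under perturbation of $D$, and once it is in hand the continuity argument goes through.
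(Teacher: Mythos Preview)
Your proof is correct and follows essentially the same route as the paper's: both reduce the question to continuous dependence of the building-block functions $f^{D_{0}}_{kw}$ on the parameter $\be$ via the explicit theta-function formulas of Theorem~\ref{T:fw-form}, and then invoke continuity of the Abel--Jacobi map together with smoothness of $\theta$. Your treatment is in fact somewhat more thorough than the paper's, since you explicitly justify why continuity of the integrands passes to continuity of the residue expressions defining $\Gamma^{D_{0}}_{\cD}$ and you single out the diagonal blocks \eqref{gamma-b}, observing that the normalization of $f^{D_{0}}_{w,A}$ makes the subtracted principal part independent of $D_{0}$; the paper leaves these points implicit.
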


\begin{proof} We fix the base point $p_{0}$ and write $D_{0} = D - 
    p_{0}$. From the identification \eqref{bundleid}, we have
    $$
      D_{0} = ( \zeta_{0}) + (\Delta)
 $$
 where $(\Delta)$ (the divisor of the bundle $\Delta$ of half-order 
 differentials) is fixed.  The divisor $(\zeta_{0})$ of the flat line 
 bundle $\zeta_{0}$ in turn is specified by
 $$
   \phi((\zeta_{0})) = \Omega a + b
 $$
 where $a,b \in {\mathbb R}^{g}$ are such that the Cauchy kernel 
 $K(\zeta_{0}; p,q)$ is given by the right-hand side of 
 \eqref{Cauchykernelform}.  Without loss of generality we can fix all 
 the poles in the divisor $(\zeta_{0})$ to be also at the base point 
 $p_{0}$.  From formula \eqref{thetachar1} and the 
 fact that the theta function is infinitely differentiable in its 
 arguments, we se that $\theta\sbm{a \\ b}(\phi(q) - \phi(p))$ is 
 continuous with respect to the parameters $a,b$.  From formulas 
 \eqref{Cauchykernelform} and \eqref{fD0wfor} (and more generally 
 \eqref{fD0kwfor}), we see that $f_{kw}^{D_{0}}$ is continuous with 
 respect to the parameters $a,b$.  Then by Jacobi inversion, we see 
 that $f_{kw}^{D_{0}}$ depends continuously on the support of the 
 divisor $((\zeta_{0}))$, and hence also on the support of the 
 divisor $D_{0}$.  As we are fixing the base point, we then have 
 continuous dependence on the support of the divisor $D$.
\end{proof}


\begin{thebibliography}{99}
    
\bibitem{AlpayVinnikov} D.\ Alpay and V.\ Vinnikov, {\em Finite 
dimensional de Branges Spaces on Riemann surfaces}, J.\ Funct.\ 
Anal.\ \textbf{189} (2002), 283--324.

\bibitem{Atiyah} Atiyah, M. F., {\em Vector bundles over an elliptic
curve},  Proc.\ London Math.\ Soc.\ (3)  
\textbf{7} (1957) 414--452.

\bibitem{B79} Ball, Joseph A., {\em Operator extremal problems, 
expectation operators and applications to operators on multiply 
connected domains}, J.\ Operator Theory \textbf{1} (1979), 153--175.

\bibitem{BC} Ball, Joseph A. and Clancey, Kevin F.,  {\em
Interpolation with meromorphic matrix functions},  
Proc.\ Amer.\ Math.\ Soc.\ \textbf{121} (1994)
no.~2, 491--496.

\bibitem{BCV} Ball, Joseph A. and Clancey, Kevin F. and Vinnikov,
Victor, {\em Concrete interpolation of 
meromorphic matrix functions on
              {R}iemann surfaces}, in: Interpolation theory, systems
theory and related topics ({T}el
              {A}viv/{R}ehovot, 1999), Oper.\ Theory Adv.\ Appl.\
\textbf{134}, pp.~137--156, Birkh\"auser, Basel, 2002.

\bibitem{BGR} Ball, Joseph A. and Gohberg, Israel and Rodman, Leiba,
{\em Interpolation of rational matrix functions}, 
 Oper.\ Theory Adv.\ Appl.\ \textbf{45},  Birkh\"auser, Basel, 1990.
 
 \bibitem{BRlocal} Ball, Joseph A. and Ran, Andr\'e C.M., {\em Local 
 inverse spectral problems for rational matrix functions}, Integral 
 Equations and Operator Theory \textbf{10} (1987) no.~3, 349--415.
 
 \bibitem{BRglobal}  Ball, Joseph A. and Ran, Andr\'e C.M., {\em 
 Global inverse spectral problems for rational matrix functions},
 Linear Algebra Appl.\ \textbf{86} (1987), 237--282.
 
 \bibitem{HIP}   Ball, Joseph A. and Vinnikov, Victor, {\em Zero-pole 
 interpolation for meromorphic matrix functions on an algebraic curve 
 and transfer functions of 2D systems}, Acta Applicandae Mathematicae 
 \textbf{45} (1996), 239--316.
 
 \bibitem{BV}  Ball, Joseph A. and Vinnikov, Victor,  {\em Zero-pole
interpolation for matrix meromorphic functions on a compact {R}iemann surface 
and a matrix {F}ay trisecant identity}, 
Amer.\ J.\ Math \textbf{121}  no.\ 4 (1999), 841--888.
 
\bibitem{EM1} Earle, C.\ and Marden, A., {\em Poincar\'e series with 
applications to $H^{p}$-spaces on bordered Riemann surfaces},  
Illinois J.\ Math.\ \textbf{13} (1969), 274--278.

\bibitem{EM2} Earle, C.\ and Marden, A., {\em Projections to 
automorphic functions}, Proc.\ Amer.\ Math.\ Soc.\ \textbf{19} 
(1968), 274--278.
	      
 \bibitem{FarkasKra} Farkas, Hershel M. and Kra, Irwin,  {\em  Riemann
surfaces}, Graduate Texts in Mathematics \textbf{71} 
 (Second Edition),  Springer-Verlag, New York, 1992.
 
 \bibitem{Forelli} Forelli, Frank, {\em Bounded holomorphic functions 
 and projections}, Illinois J.\ Math.\ \textbf{10} (1966), 367--380. 
 
 \bibitem{Forster}  Forster, Otto, {\em Lectures on Riemann
Surfaces}, 
 Graduate Texts in Mathematics \textbf{81}, Springer-Verlag, Berlin,
1981.

\bibitem{Fay} J.D.\ Fay, {\em Theta Functions on Riemann Surfaces}, 
Lecture Notes in Mathematics \textbf{352}, Springer, 1973.
 
 \bibitem{Grauert} Grauert, Hans, {\em Analytische {F}\"a{s}erungen
\"uber holomorph-vollst\"andigen
              {R}\"aumen},  Math. Ann. \textbf{135} (1958), 263--273.
 
 \bibitem{Guntheta}   Gunning, Robert C.,    {\em Riemann surfaces
and generalized theta functions}, 
 Ergebnisse der Mathematik und ihrer Grenzgebiete, Band 91,
Springer-Verlag, Berlin, 1976.
 
 \bibitem{Gunvect}  Gunning, Robert C.,  {\em Lectures on Vector
Bundles over {R}iemann Surfaces}, University of Tokyo Press, 
 Tokyo, 1967.
 
 \bibitem{Kats} Katsnelson, V. E.,  {\em Right and left joint system
representation of a rational matrix function in general position
(system
representation theory for dummies)},   in: Operator theory, system
theory and related topics ({B}eer-{S}heva/{R}ehovot, 1997),   Oper.\
Theory Adv.\ Appl., \textbf{123}, pp.~337--400, Birkh\"auser, Basel, 2001.

\bibitem{Miranda} R.\ Miranda, {\em Algebraic Curves and Riemann 
Surfaces}, Graduate Studies in Mathematics \textbf{5}, American 
Mathematical Society, Providence, 1995.

\bibitem{MumfordI} D.\ Mumford (with the collaboration of C.\ Musili, 
M.\ Nori, E.\ Previato, M.\ Stillman, and H.\ Umenmura), {\em Tata Lectures on 
Theta I}, Progress in Mathematics \textbf{28}, Birkh\"auser-Boston, 
1983; third printing 1994.

\bibitem{MumfordII}  D.\ Mumford (with the collaboration of C.\ Musili, 
M.\ Nori, E.\ Previato, and M.\ Stillman), {\em Tata Lectures on 
Theta II}, Progress in Mathematics \textbf{43}, Birkh\"auser-Boston, 
1984; third printing 1993.

\bibitem{Schechter}  S.\ Schechter, {\em On the inversion of certain 
matrices}, Mathematical Tables and Other Aids to Computation 
\textbf{13} (66) (1959), 73--77.

\bibitem{SS}  Stein, Elias M. and Shakarchi, Rami, {\em Complex 
Analysis}, Princeton Lectures in Analysis II, Princeton University 
Press, Princeton and Oxford, 2003.
 
\bibitem{Weil} Andr\'e Weil,  {\em G\'en\'eralisation des fonctions
ab\'eliennes},  J. Math.\ Pures Appl.\ (9) \textbf{17} (1938), 47--87.


\end{thebibliography}
 \end{document}